\documentclass[11pt,reqno]{amsart}
\usepackage{CJK,CJKpunct}
\usepackage{subeqnarray}
\usepackage{cases}
\usepackage{stmaryrd}
\usepackage{color}
\usepackage{amsfonts}
\usepackage{mathrsfs}
\usepackage{amssymb,amsmath,amsthm}
\usepackage{array}
\usepackage{booktabs}
\usepackage{epsfig}
\usepackage{graphicx}
\usepackage{appendix}
\usepackage{lineno}
\usepackage{enumerate}
\usepackage{comment}
\usepackage{caption}
\setlength{\textwidth}{162mm} \setlength{\textheight}{22cm}
\setlength{\headheight}{1cm} \setlength{\topmargin}{-0.4cm}
\setlength{\oddsidemargin}{0cm} \setlength{\evensidemargin}{0cm}
\setlength{\parskip}{1mm} \setlength{\unitlength}{1mm}

\newcommand{\dps}{\displaystyle}
\newtheorem{theorem}{\indent Theorem}[section]
\newtheorem{lemma}{\indent Lemma}[section]

\newtheorem{remark}{\indent Remark}[section]

\newtheorem{example}{\indent Example}[section]
\newcommand{\ba}{\begin{array}}\newcommand{\ea}{\end{array}}
\newcommand{\be}{\begin{eqnarray}}\newcommand{\ee}{\end{eqnarray}}
\newcommand{\beq}{\begin{equation*}}\newcommand{\eeq}{\end{equation*}}
\newcommand{\bex}{\begin{eqnarray*}}
\newcommand{\eex}{\end{eqnarray*}}
\newcommand{\tabincell}[2]{\begin{tabular}{@{}#1@{}}#2\end{tabular}}
\newcommand{\PreserveBackslash}[1]{\let\temp=\\#1\let\\=\temp}
\newcolumntype{C}[1]{>{\PreserveBackslash\centering}p{#1}}
\newcolumntype{R}[1]{>{\PreserveBackslash\raggedleft}p{#1}}
\newcolumntype{L}[1]{>{\PreserveBackslash\raggedright}p{#1}}

\def\bq{\begin{equation}}
\def\eq{\end{equation}}
\def\beq{\begin{equation*}}
\def\eeq{\end{equation*}}
\def\br{\begin{eqnarray}}
\def\er{\end{eqnarray}}
\def\brr{\bq\begin{array}{r@{}l}}
\def\err{\end{array}\eq}
\def\bry{\beq\begin{array}{r@{}l}}
\def\ery{\end{array}\eeq}

\font\tenbi=cmmib10   at 11 pt
\font\sevenbi=cmmib10 at 9pt
\font\fivebi=cmmib7 at 6pt
\newfam\bifam
\textfont\bifam=\tenbi \scriptfont\bifam=\sevenbi  \scriptscriptfont\bifam=\fivebi

\def\bi{\fam\bifam\tenbi}
\font\sixtdb=msbm10 at 16 pt \font\tendb=msbm10 at 12 pt  \font\sevendb=msbm7
\newfam\dbfam
\textfont\dbfam=\sixtdb

\textfont\dbfam=\tendb \scriptfont\dbfam=\sevendb



\def\Dt {\triangle t}

\def\n{{\bi n}}
\def\x{{\bi x}}



\title[Unconditionally stable schemes for time fractional Allen-Cahn]
{Highly efficient schemes for time fractional Allen-Cahn equation using extended SAV approach$^*$}
\author[Dianming Hou, Hongyi Zhu, and Chuanju Xu]
{Dianming Hou$^{1}$
\quad
Hongyi Zhu$^{2}$
\quad
Chuanju Xu$^{2,3,4}$}
\thanks{\hskip -12pt
${}^*$This research is partially supported by NSF of China
(Grant numbers
11971408, 51661135011, 11421110001, and
91630204).
\\
$^{1}$School of Mathematics and Statistics, Jiangsu Normal
University, 221116 Xuzhou, China.\\
${}^{2}$School of Mathematical Sciences and
Fujian Provincial Key Laboratory of Mathematical Modeling and High Performance
Scientific Computing, Xiamen
University, 361005 Xiamen, China.\\
${}^{3}$Bordeaux INP, Laboratoire I2M UMR 5295, 33607 Pessac, France.\\
${}^{4}$Corresponding author. Email: cjxu@xmu.edu.cn (C. Xu)}

\keywords {time fractional Allen-Cahn, time-stepping scheme, unconditional stability, spectral method}
\subjclass{65N35, 65M70, 45K05, 41A05, 41A10, 41A25}
\begin{document}
\graphicspath{{figures/},}

\date {\today}
\maketitle

\begin{abstract}
In this paper, we propose and analyze high order efficient schemes
for the time fractional Allen-Cahn equation.
The proposed schemes are based on the L1 discretization for the time fractional derivative and
the extended scalar auxiliary variable (SAV)
approach developed very recently to deal with the nonlinear terms in the equation.
The main contributions of the paper consist in: 1)
constructing first and higher order unconditionally stable schemes for different mesh types, and
proving the unconditional stability of the constructed schemes for the uniform mesh; 2) carrying out numerical experiments
to verify the efficiency of the schemes and to investigate the coarsening dynamics governed by the time fractional Allen-Cahn equation. Particularly, the influence of the fractional order on the coarsening behavior is carefully examined.
Our numerical evidence shows that the proposed schemes are more robust than the existing methods,
and their efficiency is
less restricted to particular forms of the nonlinear potentials.
\end{abstract}

\section{Introduction}
\setcounter{equation}{0}

As a useful modelling tool, gradient flows has been used to model many physical problems,
particularly dissipative systems, which are systems driven by
dissipation of free energy.
This variational modelling framework generally leads to
partial differential equations having the mathematical structure of gradient flows.
In its abstract form, variational modelling of such systems consists of choosing a
state space, a driving functional, and a dissipation mechanism. Precisely,
models of gradient flows take the general form:
\be\label{prob0}
\dps\frac{\partial \phi}{\partial t}=-\mbox{grad}_{H} E(\phi),
\ee
where $\phi$ is the state function (also called phase function in many cases),
$E[\phi(\x,t)]$ is the free energy driving functional associated to the physical problem, and
$\mbox{grad}_{H} E(\cdot)$ is the functional derivative of $E$ in the Sobolev space $H$.
Multiplying both sides of \eqref{prob0} by $\delta E/\delta\phi$ and integrating the resulting equation
gives the energy dissipation law:
 \bq\label{EDlaw}
 \frac{d}{dt}E(\phi)
 =\Big(\mbox{grad}_{H} E(\phi), \frac{\partial \phi}{\partial t}\Big)
 =-\|\mbox{grad}_{H} E(\phi)\|_{0}^{2},
 \eq
 where $(\cdot,\cdot)$ is the $L^{2}-$inner product, and $\|\cdot\|_0$ is the associated norm.
 This means that
the state function $\phi$ evolves in such a way as to have the energy functional $E$ decrease in
time (least action principle from physics), i.e., in the opposite direction to the gradient of
$E$ at $\phi$.

The energy-based variational framework makes the equations a
thermodynamically-consistent and physically attractive approach to model multi-phase flows.
It has long been used in many fields of science and engineering,
particularly in materials science and fluid dynamics, see, e.g., \cite{Cahn59,All79,Les79,Doi88,Gur96,And97,Eld02,Yue04}
and the references therein.
Typical examples include the Cahn-Hilliard and Allen-Cahn equations for multi-phase flows, for which
the evolution PDE system is resulted from the energetic variation of the action functional of the
total free energy in different Sobolev spaces.

In this paper, we are interested in models deriving from gradient flows having modified dissipation mechanism.
That is, we consider the gradient flows of the form
\be\label{prob}
^{C}_{0}{}\!\!D^{\alpha}_{t}\phi=-\mbox{grad}_{H} E(\phi),
\ee
where $0<\alpha<1$, and $^{C}_{0}{}\!\!D^{\alpha}_{t}$ is the Caputo fractional derivative defined as follows:
\beq
^{C}_{0}{}\!\!D^{\alpha}_{t}\phi(t):=\frac{1}{\Gamma(1-\alpha)}\int_{0}^{t}(t-s)^{-\alpha}\phi'(s)ds.
\eeq
It is seen from the definition that the fractional derivative is some kind of weighted average of the traditional derivative
in the history. This means that the change rate, i.e., the derivative, at the current time is affected by the historical rates.
This property is often used to describe the memory effect which can be present in some
materials such as viscoelastic materials or polymers.
Intuitively, the gradient flows model (\ref{prob0}) is useful for describing the systems in which
dissipation of the associated free energy has memory effect in some circumstances.

Mathematically, such fractional-type gradient flows have been studied in the last few years and have quickly
attracted more and more attentions \cite{CZCWZ18,ZLWW14,DJLQ18,AM17,AIM17,ASS16,CZW18,LCWZ18,SXK16,LWY17,JLGZ19}.
Wang et al. \cite{CZW18,LCWZ18} numerically studied the time fractional phase field models, including the Cahn-Hilliard equations
with different variable mobilities and molecular beam epitaxy models.
Their numerical tests indicated that the effective free
energy and roughness of the time fractional phase field models obey a universal power law scaling dynamics during coarsening. Tang et al. \cite{TYZ18} proved
that the time-fractional phase field models indeed admit a modified energy dissipation law of an integral type.
Moreover, they applied the L1 approach to discretize the time fractional derivative on the uniform mesh, leading to a scheme of
first order accuracy.
Recently, Du et al. \cite{DYZ19} developed several time stepping schemes, i.e., convex
splitting scheme, weighted convex splitting scheme, and linear weighted stabilized scheme,
for the time fractional Allen-Cahn equation.
They proved that the convergence rates of the proposed time-stepping schemes are $O(\tau^{\alpha})$ for typical solutions of the
underlying equation. {\color{black}Very recently, Liao et al. \cite{LTZ19} developed a second-order nonuniform time-stepping scheme for the
time-fractional Allen-Cahn equation. They showed that the proposed scheme preserves the
discrete maximum principle and presented a sharp maximum-norm error estimate which reflects the temporal regularity.}

The goal of this paper is to construct and analyze some more efficient schemes
for the time fractional Allen-Cahn equation as follows:
\bq\label{prob}
^{C}_{0}{}\!\!D^{\alpha}_{t}\phi-\varepsilon^{2}\Delta\phi+F'(\phi)=0.
\eq
Precisely, our aim is to propose unconditionally stable schemes
which have provable higher order convergence rate than the existing schemes in the literature.
The main idea is to combine existing efficient discretizations for the time fractional derivative and
SAV approaches for the traditional gradient flows. In fact, there has been significant progress in the
numerical development for both the time fractional differential equation and gradient flows in recent years.
Among the existing approaches for the time fractional derivative,
the so-called L1 scheme is probably the best known; see, e.g., Sun and Wu
\cite{SW06} and Lin and Xu \cite{LX07}.
The L1 scheme makes use of a piecewise linear approximation and attains $(2-\alpha)$-order convergence rate for the $\alpha$-order fractional derivative \cite{LX07,GSZ12,LX15}.
Based on piecewise linear approximation at the closest interval to the time instant and piecewise quadratic interpolation at the previous time intervals,
Alikhanov \cite{Ali15} constructed a L2-$1_{\sigma}$ scheme and proved it is second order accurate when applied to the time fractional diffusion equation.
Gao et al. \cite{GSZ14} and Lv and Xu \cite{LX16} developed $(3-\alpha)$-order schemes
by using piecewise quadratic interpolation.
In order to reduce the high storage requirement,
several authors have proposed to approximate the weakly singular
kernel function by a sum-of-exponentials (SOE),
resulting in some new stepping schemes with reduced storage; see, e.g.,
\cite{BH17,BH17SIAM,JZ17,ZZJZ17,YSZ17,ZTB17}.
Concerning numerical approaches for gradient flows, we would like to mention the most recent results
in the past few years. There appeared two novel strategies: the invariant energy quadratization (IEQ)
method \cite{Yan16,Zha17} and the so-called scalar auxiliary variable (SAV) approach \cite{Shen17_1,Shen17_2}. Based on these two approaches, second-order
unconditionally stable schemes have been successfully constructed for a large class of gradient flow models.
Very recently, Hou et al. \cite{HAX19} proposed an extension of the SAV approach,
which leads to more robust unconditionally stable schemes and achieves more ability to guarantee the dissipation law of the total free energy.

The purpose of the current study is twofold. The first is to make use of the above mentioned results to construct efficient numerical
schemes for the time fractional Allen-Cahn equation, and analyze the stability properties of the proposed methods.
The convergence behavior will be investigated by means of numerical examples.
The second is to carry out numerical experiments by using the proposed schemes
to study coarsening dynamics governed by the fractional Allen-Cahn equation. In particular, we are interested to the effect of the fractional order on the coarsening behavior.
The paper is organized as follows: In the next section, we propose the extended SAV reformulation for the fractional
Allen-Cahn equation.
In Section 3, we describe the numerical schemes
for both uniform and nonuniform meshes, and
carry out the stability analysis for the proposed schemes on the uniform mesh.
In Section 4, we construct and analyze a higher order scheme.
The numerical experiments are carried out in Section 5, not only to validate the proposed methods, but also to
numerically investigate the coarsening dynamics.
Finally, the paper ends with some concluding remarks.

\section {Extended SAV reformulation}\label{sect2a}
\setcounter{equation}{0}

We first introduce some notations which will be used throughout the paper. Let $H^{m}(\Omega)$ and $\|\cdot\|_{m},
m=0,\pm1,\cdots$, denote the standard Sobolev spaces and their norms, respectively. In particular, the norm and inner product of $L^{2}(\Omega):=H^{0}(\Omega)$ are denoted by $\|\cdot\|_{0}$ and $(\cdot,\cdot)$ respectively.

For the sake of simplicity we consider the time fractional Allen-Cahn equation {\color{black}on a bounded domain $\Omega\in \mathbb{R}^{n}~(n=1,2,3)$}  subject to
the periodic boundary condition or Neumann boundary condition, i.e.,
\be\label{FAC}
\ba{r}
^{C}_{0}{}\!\!D^{\alpha}_{t}\phi-\varepsilon^{2}\Delta\phi+F'(\phi)=0, \ \mbox{in } (0,T]\times \Omega;\\[2mm]
\frac{\partial\phi}{\partial \n}\big|_{\partial\Omega}=0 \ \mbox{or periodic boundary condition};\\[2mm]
\phi(\x,0)=\phi_{0}(\x) , \ \mbox{in } \Omega.
\ea
\ee
It has been proved in \cite{TYZ18} that this problem admits the following dissipation law:
\bq\label{AClaw}
E(\phi(\x,t))-E(\phi(\x,0))=-\int_{\Omega}\mathcal{A}_{\alpha}(\phi_{t},\phi_{t})d\x\leq 0, \mbox{ for all }t\geq 0,
\eq
where
\beq
E(\phi):=\dps\int_{\Omega}\Big[\frac{\varepsilon^{2}}{2}|\nabla\phi|^{2}+F(\phi)\Big]d\x, \ \ \mathcal{A}_{\alpha}(g,h):=\frac{1}{\Gamma(1-\alpha)}\int_{0}^{t}\int_{0}^{s}(s-\sigma)^{-\alpha}g(\sigma)h(s)d\sigma ds.
\eeq

{\color{black}
We also define the partial free energy $E_{\theta}(\phi)$ by
\beq
E_{\theta}(\phi):=\dps\int_{\Omega}\Big[\frac{\theta}{2}|\nabla\phi|^{2}+F(\phi)\Big]d\x,\ \ 0\leq\theta\leq\varepsilon^{2}.
\eeq
Throughout this paper, we will assume that $E_{\theta}(\phi)$ is bounded from below, i.e., there always exists a positive constant $C_{0}$ such that $E_{\theta}(\phi)+C_{0}>0$.

We first derive a $H^{2}$-regularity result for the time fractional Allen-Cahn equation \eqref{FAC} with a generalized nonlinear free energy density $F(\phi)$. For nonlinear equations the $H^{2}$-regularity plays a crucial role in both theoretical and numerical analysis for the reason that the boundedness of the solution can be guaranteed by the $H^{2}$-regularity (for equations of dimension less than four).
We would like to point out that the $H^{2}$-regularity for the classical Allen-Cahn equation, i.e., $\alpha=1$, has recently been established
\cite{SX18}
by employing a type of Galerkin approach. We will make use of a similar approach to study the regularity of the fractional version.

\begin{theorem}\label{thm-regu}
Assume $\phi_{0}\in H^{2}(\Omega)$ and $F\in C^{3}(\mathbb{R})$ satisfying
\bq\label{asum1}
|F''(x)|<c(|x|^{p}+1),\quad p>0 \mbox{ if }n=1,2; \quad 0<p<4 \mbox{ if }n=3.
\eq
Then for any $T>0$, the problem \eqref{FAC} has a unique solution $\phi$ satisfying
\beq
\dps \phi\in C([0,T];H^{2}(\Omega))\cap L^{2}([0,T];H^{3}(\Omega)).
\eeq
\end{theorem}
\begin{proof}
Denote by $\{\omega_{j}\}$ the orthonormal basis in $L^{2}(\Omega)$ consisting of the eigenfunctions of $-\Delta,$ i.e.
\beq
-\Delta\omega_{j}=\lambda_{j}\omega_{j}.
\eeq
We first consider the approximate solution under the form
\beq
\phi_{m}(\cdot,t)=\dps\sum_{j=1}^{m}g_{jm}(t)\omega_{j}
\eeq
to the problem
\bq\label{req1}
(^{C}_{0}{}\!\!D^{\alpha}_{t}\phi_{m},\omega_{j})+\varepsilon^{2}(\nabla\phi_{m},\nabla\omega_{j})+(F'(\phi),\omega_{j})=0,
\eq
where $\phi_{m}(\x,0)$ is the $L^{2}$-projection of $\phi_{0}(\x)$ on the space spanned by $\{\omega_{j}\}$.
\\[1.5mm]
Multiplying both sides of \eqref{req1} by $g'_{jm}(t)$ then summing up for $j = 1,\dots, m$,
we get the following energy inequality
\beq
E(\phi_{m}(\x,t))-E(\phi_{m}(\x,0))=-\int_{\Omega}\mathcal{A}_{\alpha}(\partial_{t}\phi_{m},\partial_{t}\phi_{m})d\x\leq 0.
\eeq
Under the assumption
$\phi_{0}\in H^{2}(\Omega)\subseteq L^{\infty}(\Omega), E_{\theta}(\phi)+C_{0}>0, F\in C^{3}(\mathbb{R})$, and using the definition of $E(\phi)$, we deduce from the above energy inequality
the following $H^1$-bound
\beq
\|\phi_{m}\|_{H^{1}}\leq c E(\phi_{m}(0))+c\leq M_{0},
\eeq
where $M_{0}$ is positive constant depending on $\|\phi_{0}\|_{H^{2}}$.
\\[1.5mm]
To derive the $H^2$-bound, we
multiply \eqref{req1} by $\lambda^{2}_{j}g_{jm}(t)$ and sum up for $j=1,\cdots,m$. This gives
\bq\label{req2}
\dps\int_{\Omega} {}^{C}_{0}{}\!\!D^{\alpha}_{t}\Delta\phi_{m}\Delta\phi_{m}d\x+\|\nabla\Delta\phi_{m}\|^{2}_{0}=-(\nabla F'(\phi_{m}),\nabla\Delta\phi_{m})\leq\frac{1}{2}\|\nabla F'(\phi_{m})\|^{2}_{0}+\frac{1}{2}\|\nabla\Delta\phi_{m}\|^{2}_{0}.
\eq
Using Lemma 1 in \cite{Ali11} and Lemma 2.3 in \cite{SX18}, we get
\bq\label{req3}
\dps\int_{\Omega} {}^{C}_{0}{}\!\!D^{\alpha}_{t}\Delta\phi_{m}\Delta\phi_{m}d\x\geq \frac{1}{2}{}^{C}_{0}{}\!\!D^{\alpha}_{t}\|\Delta\phi_{m}\|^{2}_{0},\ \
\|\nabla F'(\phi_{m})\|^{2}_{0}\leq\frac{1}{2}\|\nabla\Delta\phi_{m}\|^{2}_{0}+c(M_{0}).
\eq
Combining \eqref{req2} and \eqref{req3} gives
\beq
{}^{C}_{0}{}\!\!D^{\alpha}_{t}\|\Delta\phi_{m}\|^{2}_{0}+\frac{1}{2}\|\nabla\Delta\phi_{m}\|^{2}_{0}\leq c(M_{0}).
\eeq
Then, applying the Riemann-Liouville fractional integral operator $_{0}I^{\alpha}_{t}$ to
the above inequality, we obtain
\beq
\|\Delta\phi_{m}\|^{2}_{0}-\|\Delta\phi_{m}(0)\|^{2}_{0}+\frac{1}{2}{}_{0}I^{\alpha}_{t}\|\nabla\Delta\phi_{m}\|^{2}_{0}\leq c(M_{0})T^{\alpha}.
\eeq
Furthermore, we have
\beq
\|\Delta\phi_{m}\|^{2}_{0}\leq c(M_{0})T^{\alpha},\ \ \int_{0}^{t}\|\nabla\Delta\phi_{m}\|^{2}_{0}dt\leq c(M_{0})(T+T^{1-\alpha}).
\eeq
This means $\phi_{m}\in L^{\infty}([0,T];H^{2}(\Omega))\cap L^{2}([0,T];H^{3}(\Omega))$.
\\[1.5mm]
Following a similar discussion as in Theorem 2.6 in \cite{SX18},
we can prove $\phi_{m}\rightarrow \phi$ strongly in $L^{2}([0,T];H^{1}(\Omega))$ as $m\rightarrow\infty$, and
$\phi\in C([0,T];H^{2}(\Omega))\cap L^{2}([0,T];H^{3}(\Omega))$.
\\[1.5mm]
The uniqueness of the solution can also be established by following the same lines as in \cite{SX18},
Lemma 1 in \cite{Ali11}, and Gronwall lemma.
This completes the proof.
\end{proof}
}

The key to construct our schemes for \eqref{FAC} is to rewrite the original equation
into the following equivalent form,
inspired by the idea of the extended SAV approach introduced in \cite{HAX19}:
 \be\label{re_prob}
 \dps ^{C}_{0}{}\!\!D^{\alpha}_{t}\phi-\varepsilon^{2}\Delta\phi+\Big(1-\frac{R(t)}{\sqrt{E_{\theta}(\phi)+C_{0}}}\Big)\theta\Delta\phi+\frac{R(t)}{\sqrt{E_{\theta}(\phi)+C_{0}}}F'(\phi)=0,
\ee
where
\beq
R(t)=\sqrt{E_{\theta}(\phi)+C_{0}}.
\eeq

At the discrete level, we need to find a suitable way to evaluate the auxiliary variable $R(t)$. To this end, we
formally take the derivative of $R(t)$ with respect to $t$ to obtain the following auxiliary equation:
\be\label{re_prob2}
 \dps\frac{d R}{d t}=\frac{1}{2\sqrt{E_{\theta}(\phi)+C_{0}}}\int_{\Omega}(-\theta\Delta\phi+F'(\phi))
\frac{\partial \phi}{\partial t}d\x.
\ee
Then we have two coupled equations \eqref{re_prob} and \eqref{re_prob2} governing two unknown functions, i.e., the original
two variable function $\phi$ and the auxiliary one variable function $R(t)$. These two equations will be used to construct our
efficient time-stepping schemes to numerically compute the approximate solutions.
Before that, we first realize, by
taking $L^2-$inner products and integrating from 0 to $t$ of the two equations \eqref{re_prob} and \eqref{re_prob2} with $\frac{\partial \phi}{\partial t}$
 and $2R(t)$ respectively, that
 \be\label{law2}
\Big[R^{2}(t)+\frac{\varepsilon^{2}-\theta}{2}\|\nabla\phi(\x,t)\|^{2}_{0}\Big]-\Big[R^{2}(0)+\frac{\varepsilon^{2}-\theta}{2}\|\nabla\phi(\x,0)\|^{2}_{0}\Big]=-\int_{\Omega}\mathcal{A}_{\alpha}(\phi_{t},\phi_{t})d\x\leq 0.
 \ee
 Noticing that
 $$\dps R^{2}+\frac{\varepsilon^{2}-\theta}{2}\|\nabla\phi\|^{2}_{0}=E(\phi)+C_{0},
 $$
 we obtain exactly the same dissipation law as \eqref{AClaw}. This is a natural result since the two coupled equations
 \eqref{re_prob} and \eqref{re_prob2}, together with corresponding boundary and initial conditions,
 are strictly equivalent to the original problem \eqref{FAC}.
 However, as we are going to see, at the discrete level the energy dissipation law
 will take the form \eqref{law2} more than \eqref{AClaw}.

\section {A first order scheme}\label{sect2}
\setcounter{equation}{0}

 Starting with the equivalent equations \eqref{re_prob} and \eqref{re_prob2}, we are able to construct various time stepping
 schemes to calculate the solution $\phi$.  Let $n=0,1,\dots, M$ be the time step, $\Dt_{n}:=t_{n}-t_{n-1}$ be the time step size,
 and $\tau=\max\{\Dt_{n},n=1,2,\cdots\}$ be the maximum time step size of the temporal mesh.

 \subsection{First order scheme}
 \label{s2.1}
 We first recall the very popular L1 approximation \cite{LX07} to discretize the Caputo fractional derivative $^{C}_{0}{}\!\!D^{\alpha}_{t}\phi$:
  \bry
  &\dps ^{C}_{0}{}\!\!D^{\alpha}_{t_{n+1}}\phi=\frac{1}{\Gamma(1-\alpha)}\int_{0}^{t_{n+1}}(t_{n+1}-s)^{-\alpha}\phi_{s}(\x,s)ds\\[9pt]
  =&\dps\sum_{k=0}^{n}\frac{\phi(\x,t_{k+1})-\phi(\x,t_{k})}{\Dt_{k+1}}\frac{1}{\Gamma(1-\alpha)}\int_{t_{k}}^{t_{k+1}}(t_{n+1}-s)^{-\alpha}ds+e^{n+1}_{\tau}\\[12pt]
  :=&L^{\alpha}_{n+1}\phi+e^{n+1}_{\tau},
  \ery
 where
\be\label{L1}
 L^{\alpha}_{n+1}\phi
 =\dps\sum_{k=0}^{n}b_{n-k}\frac{\phi(\x,t_{k+1})-\phi(\x,t_{k})}{\Dt_{k+1}}, \ \ b_{n-k}=\frac{1}{\Gamma(1-\alpha)}\int_{t_{k}}^{t_{k+1}}(t_{n+1}-s)^{-\alpha}ds,
\ee
and the truncation error $e^{n+1}_{\tau}$ is defined by
\bex
e^{n+1}_{\tau}
= \frac{1}{\Gamma(1-\alpha)}\Big[\int_{0}^{t_{n+1}}(t_{n+1}-s)^{-\alpha}\phi_{s}(\x,s)ds
- \dps\sum_{k=0}^{n}\frac{\phi(\x,t_{k+1})-\phi(\x,t_{k})}{\Dt_{k+1}}\int_{t_{k}}^{t_{k+1}}(t_{n+1}-s)^{-\alpha}ds\Big].
\eex

- In the uniform mesh case, i.e., $t_{n}={n\over M}T, n=0,1,\dots,M$, we have
\bq\label{uniform}
b_{j}=\frac{\tau^{1-\alpha}}{\Gamma(2-\alpha)}[(j+1)^{1-\alpha}-j^{1-\alpha}], ~~j=0,1,\cdots,n.
\eq
In this case the truncation error $e^{n+1}_{\tau}$ can be bounded by $c_{\phi}\tau^{2-\alpha}$ if the exact solution is sufficiently smooth \cite{LX07}.
The following property has also been known; see, e.g., \cite{TYZ18}:
for any $(u_{0},u_{1},\cdots,u_{n})^T\in \mathbb{R}^{n+1},$ it holds
\bq\label{posit}
\dps\sum_{k=0}^{n}\sum_{j=0}^{k}b_{k-j}u_{j}u_{k}>0.
\eq

- In the graded mesh case, i.e., $t_{n}=\big({n\over M}\big)^rT, r\geq1, n=0,1,\cdots,M$,
we have
\beq
b_j=\frac{T^{1-\alpha}}{\Gamma(2-\alpha)M^{(1-\alpha)r}}\Big[\big[(n+1)^{r}-(n-j)^{r}\big]^{1-\alpha}-\big[(n+1)^{r}-(n-j+1)^{r}\big]^{1-\alpha}\Big].
\eeq

Applying the above L1 approximation and some other first order finite difference approximations to \eqref{re_prob} and \eqref{re_prob2}, we propose the first scheme as follows:
 \begin{subequations}\label{eq1}
 \begin{align}
 &\dps L^{\alpha}_{n+1}\phi-\varepsilon^{2}\Delta\phi^{n+1}+\Big(1-\frac{R^{n+1}}{\sqrt{E_{\theta}^{n}+C_{0}}}\Big)\theta\Delta\phi^{n}+\frac{R^{n+1}}{\sqrt{E_{\theta}^{n}+C_{0}}}F'(\phi^{n})=0,\label{eq1_2}\\
 &\dps\frac{R^{n+1}-R^{n}}{\Delta t_{n+1}}=\frac{1}{2\sqrt{E_{\theta}^{n}+C_{0}}}\int_{\Omega}(-\theta\Delta\phi^{n}+F'(\phi^{n}))
\frac{\phi^{n+1}-\phi^{n}}{\Delta t_{n+1}}d\x,\label{eq1_3}
 \end{align}
 \end{subequations}
where $\phi^{n}$ is an approximation to $\phi(t^n)$ and $E^{n}_{\theta}:=E_{\theta}(\phi^{n})$.
Intuitively, this scheme is of first-order convergence with respect to the time step size.
Although a rigorous proof remains an open question, this intuitive convergence rate will be investigated
through the numerical experiments to be presented later on.

\subsection{Stability}
 \label{s2.2}
 Next we will analyze the stability property of the scheme in the following theorem.
\begin{theorem}\label{th1}
If the mesh is uniform, then the scheme \eqref{eq1} is unconditionally stable in the sense that the following discrete energy law holds
\bq\label{eq2}
\dps\frac{E^{n+1}_{\varepsilon,\theta}-E^{0}_{\varepsilon,\theta}}{\tau}
\leq-\sum_{k=0}^{n}(L^{\alpha}_{k+1}\phi,\frac{\phi^{k+1}-\phi^{k}}{\tau})<0,
\eq
where
\beq
\dps E^{n}_{\varepsilon,\theta}:=\frac{1}{2}(\varepsilon^{2}-\theta)\|\nabla\phi^{n}\|^{2}_{0}+|R^{n}|^{2}.
 \eeq
\end{theorem}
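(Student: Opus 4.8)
The plan is to reproduce at the discrete level the computation behind the continuous law \eqref{law2}. First I would pair the momentum equation \eqref{eq1_2} in the $L^{2}$ inner product with the increment $\phi^{k+1}-\phi^{k}$, integrating by parts in every Laplacian term so that each $-(\Delta\psi,\chi)$ becomes $(\nabla\psi,\nabla\chi)$ with no boundary contribution (this is where the periodic or Neumann condition is used). Simultaneously I would multiply the auxiliary relation \eqref{eq1_3} by $2\Delta t_{k+1}R^{k+1}$, which clears the denominators and, through the identity $2a(a-b)=a^{2}-b^{2}+(a-b)^{2}$, turns its left-hand side into $|R^{k+1}|^{2}-|R^{k}|^{2}+|R^{k+1}-R^{k}|^{2}$ while its right-hand side becomes $\frac{R^{k+1}}{\sqrt{E_{\theta}^{k}+C_{0}}}(-\theta\Delta\phi^{k}+F'(\phi^{k}),\phi^{k+1}-\phi^{k})$. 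Combining the two relations so that this $R$-weighted block — which is precisely the term produced by the nonlinear and explicit-Laplacian part of \eqref{eq1_2} — cancels, I am left with
\[
(L^{\alpha}_{k+1}\phi,\phi^{k+1}-\phi^{k})+\varepsilon^{2}(\nabla\phi^{k+1},\nabla\phi^{k+1}-\nabla\phi^{k})-\theta(\nabla\phi^{k},\nabla\phi^{k+1}-\nabla\phi^{k})=-\big(|R^{k+1}|^{2}-|R^{k}|^{2}+|R^{k+1}-R^{k}|^{2}\big).
\]
This cancellation is the SAV mechanism and is the step that makes the whole estimate work.

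Next I would expand the two gradient inner products by the polarization identities, noting that $(\nabla\phi^{k+1},\nabla\phi^{k+1}-\nabla\phi^{k})$ contributes $+\tfrac12\|\nabla\phi^{k+1}-\nabla\phi^{k}\|_{0}^{2}$ and $-(\nabla\phi^{k},\nabla\phi^{k+1}-\nabla\phi^{k})$ contributes $+\tfrac12\|\nabla\phi^{k+1}-\nabla\phi^{k}\|_{0}^{2}$ as well. Since $0\le\theta\le\varepsilon^{2}$, the weighted combination produces the telescoping part $\tfrac{\varepsilon^{2}-\theta}{2}(\|\nabla\phi^{k+1}\|_{0}^{2}-\|\nabla\phi^{k}\|_{0}^{2})$ together with the nonnegative term $\tfrac{\varepsilon^{2}+\theta}{2}\|\nabla\phi^{k+1}-\nabla\phi^{k}\|_{0}^{2}$, which yields the one-step identity
\[
E^{k+1}_{\varepsilon,\theta}-E^{k}_{\varepsilon,\theta}+\tfrac{\varepsilon^{2}+\theta}{2}\|\nabla\phi^{k+1}-\nabla\phi^{k}\|_{0}^{2}+|R^{k+1}-R^{k}|^{2}=-(L^{\alpha}_{k+1}\phi,\phi^{k+1}-\phi^{k}).
\]
Dropping the two nonnegative terms on the left, summing over $k=0,\dots,n$, and dividing by $\tau$ gives precisely the left-hand inequality in \eqref{eq2}.

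It then remains to check that the right-hand side of \eqref{eq2} is negative. On the uniform mesh $\Delta t_{k+1}\equiv\tau$ and $b_{j}$ is as in \eqref{uniform}, so $L^{\alpha}_{k+1}\phi=\tau^{-1}\sum_{j=0}^{k}b_{k-j}(\phi^{j+1}-\phi^{j})$ and
\[
\sum_{k=0}^{n}\Big(L^{\alpha}_{k+1}\phi,\frac{\phi^{k+1}-\phi^{k}}{\tau}\Big)=\frac{1}{\tau^{2}}\sum_{k=0}^{n}\sum_{j=0}^{k}b_{k-j}\,(\phi^{j+1}-\phi^{j},\phi^{k+1}-\phi^{k}).
\]
Applying the positivity property \eqref{posit} pointwise in $\x$ to the real sequence $u_{k}=\phi^{k+1}(\x)-\phi^{k}(\x)$ and then integrating over $\Omega$ shows this double sum is $\ge 0$, and strictly positive unless all the increments vanish; this is where the uniformity of the mesh enters decisively. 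Together with the previous step this proves \eqref{eq2}.

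The only genuinely nontrivial point is the bookkeeping in the first step: one has to verify that the $R$-weighted contribution coming from \eqref{eq1_2} is identical to the one coming from \eqref{eq1_3}, so that these terms cancel exactly rather than up to a remainder needing separate control. Everything after that is the standard polarization-plus-telescoping argument, with the kernel positivity \eqref{posit} absorbing the nonlocal history term that is special to the fractional setting.
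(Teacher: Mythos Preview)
Your proposal is correct and follows essentially the same route as the paper's proof: test \eqref{eq1_2} against the increment, multiply \eqref{eq1_3} by $2R^{k+1}$, exploit the SAV cancellation, apply the polarization identity \eqref{q1} to extract the telescoping energy and nonnegative remainders, then sum and invoke \eqref{posit}. The only cosmetic difference is that the paper first rewrites the gradient block as $\frac{\varepsilon^{2}-\theta}{\tau}(\nabla\phi^{k+1},\nabla(\phi^{k+1}-\phi^{k}))+\frac{\theta}{\tau}\|\nabla(\phi^{k+1}-\phi^{k})\|_{0}^{2}$ before applying \eqref{q1}, whereas you polarize the $\varepsilon^{2}$ and $\theta$ pieces separately; both computations yield the same dropped term $\tfrac{\varepsilon^{2}+\theta}{2}\|\nabla(\phi^{k+1}-\phi^{k})\|_{0}^{2}$.
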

\begin{proof}
For the uniform mesh, we deduce from taking the $L^2-$inner products of \eqref{eq1_2} and \eqref{eq1_3} with
$\frac{\phi^{n+1}-\phi^{n}}{\tau}$ and $2R^{n+1}$ respectively:
\bry
 &\dps\Big(L^{\alpha}_{n+1}\phi,\frac{\phi^{n+1}-\phi^{n}}{\tau}\Big)
 +\dps\frac{\varepsilon^{2}-\theta}{\tau}\big(\nabla\phi^{n+1},\nabla(\phi^{n+1}-\phi^{n})\big)+\frac{\theta}{\tau}\|\nabla(\phi^{n+1}-\phi^{n})\|^{2}_{0}\\[9pt]
 &\hspace{5.3cm}\dps+\frac{R^{n+1}}{\sqrt{E_{\theta}^{n}+C_{0}}}\Big(-\theta\Delta\phi^{n}+F'(\phi^{n}),\frac{\phi^{n+1}-\phi^{n}}{\tau}\Big)=0, \\[9pt]
& \dps\frac{2R^{n+1}(R^{n+1}-R^{n})}{\tau}
=\dps\frac{R^{n+1}}{\sqrt{E_{\theta}^{n}+C_{0}}}\Big(-\theta\Delta\phi^{n}+F'(\phi^{n}),\frac{\phi^{n+1}-\phi^{n}}{\tau}\Big).
\ery
Then using the identity
\bq\label{q1}
2a^{n+1}(a^{n+1}-a^n)=|a^{n+1}|^{2}-|a^{n}|^{2}+|a^{n+1}-a^{n}|^{2}.
\eq
and dropping some non-essential positive terms, we obtain
\beq
\dps\frac{E^{n+1}_{\varepsilon,\theta}-E^{n}_{\varepsilon,\theta}}{\tau}
\leq-\Big(L^{\alpha}_{n+1}\phi,\frac{\phi^{n+1}-\phi^{n}}{\tau}\Big),~~n=0,1,2,\cdots,
\eeq
Summing up the above equality from 0 to $n$ and using \eqref{L1} and \eqref{posit}, we obtain \eqref{eq2}.
\end{proof}
{\color{black}
\begin{remark}\label{rek1}
Theorem \ref{th1} indicates that the scheme \eqref{eq1} is unconditionally stable in the sense that the
modified energy $E^{n}_{\varepsilon,\theta}$ is a decreasing function of the time step. Unfortunately,
as all other SAV-based
schemes, there is no proof for the decay of the original energy $E(\phi^n)$, although many (but not all) numerical
investigation have shown that the discrete original energy is also decreasing. Our numerical experiments will also
confirm this point.
It is notable that $E^{n}_{\varepsilon,\theta}$ is a first order approximation to $E(\phi^n)$. Furthermore,
the energy stability \eqref{eq2} implies the $H^{1}$-boundedness of $\phi^{n}$, i.e., there
 exists a positive constant $M_{0}$, which may depend on $\Omega$ and $\phi_{0}$, such that
 \bq\label{req4}
 \|\phi^{n}\|_{H^{1}}+|R^{n}|\leq M_{0}, \mbox{ for all }n=1,2,\cdots, \mbox{ if } \phi_{0}\in H^{1}(\Omega).
 \eq
\end{remark}

The next theorem gives the $H^2$ bound, consequently $L^\infty$ bound, of the numerical solutions.

\begin{theorem}\label{the1}
Assume $\phi_{0}\in H^{3}(\Omega)$ and $F\in C^{3}(\mathbb{R})$ satisfying \eqref{asum1}.
Let $M_{0}$ be the bound given in \eqref{req4}.
Then for the uniform mesh and any $(n+1)<T/\tau$, the scheme \eqref{eq1} satisfies
\beq
\dps\sum_{k=0}^{n}b_{n-k}\|\Delta\phi^{k+1}\|^{2}_{0}
+\tau\varepsilon^{2}\|\nabla\Delta\phi^{n+1}\|^{2}_{0}
\leq c(M_{0},\varepsilon)T+\frac{T^{1-\alpha}}{\Gamma(2-\alpha)}\|\Delta\phi_{0}\|^{2}_{0}+\tau\varepsilon^{2}\|\nabla\Delta\phi_{0}\|^{2}_{0}.
\eeq
\end{theorem}
\begin{proof}
We deduce from taking the $L^2-$inner products of \eqref{eq1_2} with $2\tau\Delta^{2}\phi^{n+1}$,
\bry
 &\dps2\tau\Big(L^{\alpha}_{n+1}\Delta\phi,\Delta\phi^{n+1}\Big)
 +2\tau\varepsilon^{2}\|\nabla\Delta\phi^{n+1}\|^{2}_{0}=2\tau\Big(1-\frac{R^{n+1}}{\sqrt{E_{\theta}^{n}+C_{0}}}\Big)\theta(\nabla\Delta\phi^{n},\nabla\Delta\phi^{n+1})\\[9pt]
 &\hspace{7.3cm}\dps+\frac{2\tau R^{n+1}}{\sqrt{E_{\theta}^{n}+C_{0}}}(\nabla F'(\phi^{n}),\nabla\Delta\phi^{n+1}).
\ery
For the terms in the right hand side,  with a similar discussion as in \cite{SX18} Lemma 2.4,
we can obtain that
\bq\label{re_equ1}
 \dps2\tau\Big(L^{\alpha}_{n+1}\Delta\phi,\Delta\phi^{n+1}\Big)
 +2\tau\varepsilon^{2}\|\nabla\Delta\phi^{n+1}\|^{2}_{0}\leq\tau c(M_{0},\varepsilon)+\tau\varepsilon^{2}\|\nabla\Delta\phi^{n}\|^{2}_{0}+\tau\varepsilon^{2}\|\nabla\Delta\phi^{n+1}\|^{2}_{0}.
\eq
Moreover, using Lemma 1 in \cite{Ali15} gives
\bq\label{re_equ2}
 \Big(L^{\alpha}_{n+1}\Delta\phi,\Delta\phi^{n+1}\Big)\geq \frac{1}{2}L^{\alpha}_{n+1}\|\Delta\phi\|^{2}_{0}.
\eq
Combining \eqref{re_equ1} and \eqref{re_equ2}, we have
\beq
 \dps\tau L^{\alpha}_{n+1}\|\Delta\phi\|^{2}_{0}
 +\tau\varepsilon^{2}\|\nabla\Delta\phi^{n+1}\|^{2}_{0}\leq\tau c(M_{0},\varepsilon)+\tau\varepsilon^{2}\|\nabla\Delta\phi^{n}\|^{2}_{0}.
\eeq
Taking the sum of the above inequality from 0 to n and noticing that
\beq
\dps\sum_{k=0}^{n}\tau L^{\alpha}_{k+1}\|\Delta\phi\|^{2}_{0}
=\sum_{k=0}^{n}b_{n-k}\|\Delta\phi^{k+1}\|^{2}_{0}-\sum_{k=0}^{n}b_{k}\|\Delta\phi_{0}\|^{2}_{0}=\sum_{k=0}^{n}b_{n-k}\|\Delta\phi^{k+1}\|^{2}_{0}-\frac{[(n+1)\tau]^{1-\alpha}}{\Gamma(2-\alpha)}\|\Delta\phi_{0}\|^{2}_{0},
\eeq
we get
\bry
 \dps\sum_{k=0}^{n}b_{n-k}\|\Delta\phi^{k+1}\|^{2}_{0}
 +\tau\varepsilon^{2}\|\nabla\Delta\phi^{n+1}\|^{2}_{0}&\dps\leq(n+1)\tau c(M_{0},\varepsilon)+\frac{[(n+1)\tau]^{1-\alpha}}{\Gamma(2-\alpha)}\|\Delta\phi_{0}\|^{2}_{0}+\tau\varepsilon^{2}\|\nabla\Delta\phi_{0}\|^{2}_{0}\\[9pt]
 &\dps\leq c(M_{0},\varepsilon)T+\frac{T^{1-\alpha}}{\Gamma(2-\alpha)}\|\Delta\phi_{0}\|^{2}_{0}+\tau\varepsilon^{2}\|\nabla\Delta\phi_{0}\|^{2}_{0}.
\ery
The proof is completed.
\end{proof}
}
\subsection{Implementation}
\label{2.3}
Beside of its unconditional stability,
another notable property of the new scheme is that it can be efficiently implemented.
To see that, let's consider the case of uniform mesh.
We first eliminate $R^{n+1}$ from \eqref{eq1_2} by using \eqref{eq1_3} to obtain
\bq\label{te1}
\dps\sum_{k=0}^{n}b_{n-k}\frac{\phi^{k+1}-\phi^{k}}{\tau}-\varepsilon^{2}\Delta\phi^{n+1}+\theta \Delta\phi^{n}
+\Big[\frac{R^{n}}{\sqrt{E_{\theta}^{n}+C_{0}}}+\frac{1}{2(E_{\theta}^{n}+C_{0})}(\gamma^{n},\phi^{n+1}-\phi^{n})\Big]\gamma^{n}=0,
\eq
where
\beq
\gamma^{n}:=-\theta\Delta\phi^{n}+F'(\phi^{n}).
\eeq
Then reformulating \eqref{te1} gives
\be\label{eq3}
&&\big(\frac{b_{0}}{\tau}-\varepsilon^{2}\Delta\big)\phi^{n+1}+(\gamma^{n},\phi^{n+1})\frac{\gamma^{n}}{2(E_{\theta}^{n}+C_{0})}
\\ \nonumber
&=&-\frac{1}{\tau}\Big[\sum_{k=0}^{n-1}(b_{n-k}-b_{n-k-1})\phi^{k+1}+b_{n}\phi^{0}\Big]-\theta\Delta\phi^{n}-\Big[\frac{R^{n}}{\sqrt{E_{\theta}^{n}+C_{0}}}-\frac{1}{2(E_{\theta}^{n}+C_{0})}(\gamma^{n},\phi^{n})\Big]\gamma^{n}.
\ee
Denoting the right hand side in \eqref{eq3} by $g(\phi^{n})$,
we see that problem \eqref{eq3} can be solved in two steps
as follows:
\begin{subequations}\label{semi_D}
 \begin{align}
 &\begin{cases}
   \begin{array}{r@{}l}
    &\dps\big(\frac{b_{0}}{\tau}-\varepsilon^{2}\Delta\big)\phi^{n+1}_{1}=-\frac{\gamma^{n}}{2(E_{\theta}^{n}+C_{0})},\\[9pt]
    &\dps\mbox{Neumann boundary condition or periodic boundary condition on $\phi^{n+1}_1$;}\\
    \end{array}
 \end{cases}\label{semi_D_1}\\
 &\begin{cases}
    \begin{array}{r@{}l}
      &\dps\big(\frac{b_{0}}{\tau}-\varepsilon^{2}\Delta\big)\phi^{n+1}_{2}=g(\phi^{n}),\\[9pt]
      &\dps\mbox{Neumann boundary condition or periodic boundary condition on $\phi^{n+1}_2$;}\\
    \end{array}
  \end{cases}\label{semi_D_2}\\
 &\dps \phi^{n+1}=(\gamma^{n},\phi^{n+1})\phi^{n+1}_{1}+\phi^{n+1}_{2}.\label{semi_D_3}
 \end{align}
 \end{subequations}
In a first look it seems that \eqref{semi_D_3} satisfied by $\phi^{n+1}$ is an implicit equation.
However a careful examination shows that
$(\gamma^{n},\phi^{n+1})$ in \eqref{semi_D_3} can be determined explicitly.
In fact, taking the inner product of \eqref{semi_D_3} with $\gamma^{n}$ yields
\bq\label{eq4}
(\gamma^{n},\phi^{n+1})+ \sigma^{n}(\gamma^{n},\phi^{n+1})=(\gamma^{n},\phi^{n+1}_{2}),
\eq
where
\bq\label{posi}
\sigma^{n}=-(\gamma^{n},\phi^{n+1}_{1})
=\big(\gamma^{n},A^{-1}\frac{\gamma^{n}}{2(E_{\theta}^{n}+C_{0})}\big)\ \ \mbox{with } A=\frac{b_{0}}{\tau}-\varepsilon^{2}\Delta.
\eq
Note that
$A^{-1}$ is a positive definite operator and $E_{\theta}^{n}+C_{0}>0$. Thus $\sigma^{n}\ge 0$.
Then it follows from \eqref{eq4} that
\bq\label{eq5}
(\gamma^{n},\phi^{n+1})=\frac{(\gamma^{n},\phi^{n+1}_{2})}{1+\sigma^{n}}.
\eq
Using this expression, $\phi^{n+1}$ can be explicitly computed from \eqref{semi_D_3}.

In detail, the scheme \eqref{eq1} results in the following algorithm at each time step:

$(i)$ Calculation of $\phi^{n+1}_{1}$ and $\phi^{n+1}_{2}$: solving the elliptic  problems
\eqref{semi_D_1} and \eqref{semi_D_2} respectively, which can be realized in parallel.

$(ii)$ Evaluation of $(\gamma^{n},\phi^{n+1})$ using \eqref{eq5}, then $\phi^{n+1}$ using \eqref{semi_D_3}.

Thus the overall computational complexity at each time step is essentially
solving two second-order elliptic problems with constant coefficients, for which there exist different fast solvers depending on
the spatial discretization method.

\section {A higher order scheme}\label{sect3}
\setcounter{equation}{0}

\subsection {$2-\alpha$ order scheme}

Here we propose a semi-implicit scheme, called L1-CN scheme hereafter,
by applying the L1 discretization to the fractional derivative and
the Crank-Nicolson discretization to the remaining terms
for the system \eqref{re_prob} and \eqref{re_prob2}:
 \begin{subequations}\label{L1_CN}
 \begin{align}
 &\dps L^{\alpha}_{n+{1\over 2}}\phi-\varepsilon^{2}\frac{\Delta(\phi^{n+1}+\phi^{n})}{2}+\Big(1-\frac{R^{n+1}+R^{n}}{2\sqrt{E_{\theta}^{n+\frac{1}{2}}+C_{0}}}\Big)\theta\Delta\phi^{n+\frac{1}{2}}+\frac{R^{n+1}+R^{n}}{2\sqrt{E_{\theta}^{n+\frac{1}{2}}+C_{0}}}F'(\phi^{n+\frac{1}{2}}),\label{L1_CN_1}\\
 &\dps\frac{R^{n+1}-R^{n}}{\Delta t_{n+1}}=\frac{1}{2\sqrt{E_{\theta}^{n+\frac{1}{2}}+C_{0}}}\int_{\Omega}(-\theta\Delta\phi^{n+\frac{1}{2}}+F'(\phi^{n+\frac{1}{2}}))
\frac{\phi^{n+1}-\phi^{n}}{\Delta t_{n+1}}d\x,\label{L1_CN_2}
 \end{align}
 \end{subequations}
 where $\phi^{n+\frac{1}{2}}:=\phi^{n}+\frac{\Delta t_{n+1}}{2\Delta t_{n}}[\phi^{n}-\phi^{n-1}]$ is an explicit approximation to $\phi(t^{n+\frac{1}{2}})$, $E_{\theta}^{n+\frac{1}{2}}:=E_{\theta}(\phi^{n+\frac{1}{2}})$, for $n=0,1,\cdots$ with $\phi^{-1}:=\phi^{0}$, and
 \be\label{LCN}
  L^{\alpha}_{n+{1\over 2}}\phi &\ :=\dps\sum_{k=0}^{n-1}\frac{\phi^{k+1}-\phi^{k}}{\Dt_{k+1}}\frac{1}{\Gamma(1-\alpha)}\int_{t_{k}}^{t_{k+1}}(t_{n+\frac{1}{2}}-s)^{-\alpha}ds
  \nonumber \\[3pt]
  &\dps\quad+\frac{\phi^{n+1}-\phi^{n}}{\Dt_{n+1}}\frac{1}{\Gamma(1-\alpha)}\int_{t_{n}}^{t_{n+\frac{1}{2}}}(t_{n+\frac{1}{2}}-s)^{-\alpha}ds \nonumber \\[3pt]
   &\ :=\dps\sum_{k=0}^{n}\widetilde{b}_{n-k}\frac{\phi^{k+1}-\phi^{k}}{\Dt_{k+1}}, \mbox{ with $t_{n+\frac{1}{2}}=(t_{n}+t_{n+1})/2.$}
 \ee

- In the uniform mesh case, the coefficient $\widetilde{b}_{k}$ can be expressed as follows:
\bq\label{uniform_CN}
\widetilde{b}_{0}=\frac{\tau^{1-\alpha}}{\Gamma(2-\alpha)2^{1-\alpha}},~~
\widetilde{b}_{k}=\frac{\tau^{1-\alpha}}{\Gamma(2-\alpha)}\Big[\big(k+\frac{1}{2}\big)^{1-\alpha}-\big(k-\frac{1}{2}\big)^{1-\alpha}\Big], ~~k=1,2,\cdots,n.
\eq

- In the graded mesh case,
a direct calculation using $t_{n}=\big({n\over M}\big)^rT$ gives $\tau=\Delta t_{M}$
\bex
\widetilde{b}_{0}=\frac{T^{1-\alpha}}{\Gamma(2-\alpha)(2M^{r})^{1-\alpha}[(n+1)^{r}-n^{r}]^{\alpha}},
\eex
\bex
\widetilde{b}_{k}=\frac{T^{1-\alpha}}{\Gamma(2-\alpha)(2M^{r})^{1-\alpha}}\frac{\big[(n+1)^{r}+n^{r}-2(n-k)^{r}\big]^{1-\alpha}-\big[(n+1)^{r}+n^{r}-2(n-k+1)^{r}\big]^{1-\alpha}}{(n-k+1)^{r}-(n-k)^{r}},\\
k=1,2,\cdots,n.
\eex
The scheme \eqref{L1_CN} is expected to have $2-\alpha$ order
convergence, since intuitively the approximation $L^{\alpha}_{n+{1\over 2}}\phi$ to $^{C}_{0}{}\!\!D^{\alpha}_{t_{n+{1\over 2}}}\phi$ is of $2-\alpha$ order,
and the remaining approximation is of second order.
Indeed the approximation order of the operator $L^{\alpha}_{n+{1\over 2}}$ can be derived
rigorously, as shown in the following lemma.

\begin{lemma}
For the graded mesh, $\alpha\in(0,1)$ and $\phi\in C^2[0,t_{n+1}]$, we have
\beq
\Big|{}^{C}_{0}{}\!\!D^{\alpha}_{t_{n+{1\over 2}}}\phi-L^{\alpha}_{n+{1\over 2}}\phi\Big|=\mathcal{O}(\tau^{2-\alpha}).
\eeq
\end{lemma}
\begin{proof}
On each small interval $[t_{k},t_{k+1}] (0\leq k\leq n),$ denoting the linear interpolation function of $\phi(t)$ as $\Pi_{1,k}\phi$:
\beq
\Pi_{1,k}\phi=\frac{t_{k+1}-t}{\Delta t_{k}}\phi(\x,t_{k})+\frac{t-t_{k}}{\Delta t_{k}}\phi(\x,t_{k+1}),
\eeq
it follows from the linear interpolation theorem that
\beq
\phi-\Pi_{1,k}\phi=\frac{\partial^2_{t}\phi(\x,\xi_{k})}{2}(t-t_{k})(t-t_{k+1}),\quad t\in[t_{k},t_{k+1}],\xi_{k}\in(t_{k},t_{k+1}), 0\leq k\leq n.
\eeq
Let ${}^{C}_{0}{}\!\!D^{\alpha}_{t_{n+{1\over 2}}}\phi-L^{\alpha}_{n+{1\over 2}}\phi=\widehat{R}^{n}+\widehat{R}^{n+{1\over 2}},$ where
\bry
\widehat{R}^{n}&\dps=\frac{1}{\Gamma(1-\alpha)}\sum_{k=0}^{n-1}\int_{t_{k}}^{t_{k+1}}(t_{n+{1\over 2}}-s)^{-\alpha}\phi_{s}(\x,s)ds-\frac{1}{\Gamma(1-\alpha)}\sum_{k=0}^{n-1}\int_{t_{k}}^{t_{k+1}}(t_{n+{1\over 2}}-s)^{-\alpha}\partial_{s}(\Pi_{1,k}\phi) ds\\[8pt]
&\dps=\frac{1}{\Gamma(1-\alpha)}\sum_{k=0}^{n-1}\int_{t_{k}}^{t_{k+1}}(t_{n+{1\over 2}}-s)^{-\alpha}\partial_{s}(\phi(\x,s)-\Pi_{1,k}\phi) ds\\[8pt]
&\dps=\frac{-\alpha}{\Gamma(1-\alpha)}\sum_{k=0}^{n-1}\int_{t_{k}}^{t_{k+1}}(t_{n+{1\over 2}}-s)^{-\alpha-1}(\phi(\x,s)-\Pi_{1,k}\phi) ds\\[8pt]
&\dps=\frac{-\alpha}{2\Gamma(1-\alpha)}\sum_{k=0}^{n-1}\int_{t_{k}}^{t_{k+1}}\partial_{t}^{2}\phi(\x,\xi_{k})(s-t_{k})(s-t_{k+1})(t_{n+{1\over 2}}-s)^{-\alpha-1}ds,\\[8pt]
\widehat{R}^{n+{1\over 2}}&\dps=\frac{1}{\Gamma(1-\alpha)}\int_{t_{n}}^{t_{n+{1\over 2}}}(t_{n+{1\over 2}}-s)^{-\alpha}\phi_{s}(\x,s)ds\\[8pt]
&\quad\quad\quad \dps-\frac{1}{\Gamma(1-\alpha)}\int_{t_{n}}^{t_{n+{1\over 2}}}(t_{n+{1\over 2}}-s)^{-\alpha}\frac{\phi(\x,t_{n+1})-\phi(\x,t_{n})}{\Delta t_{n}} ds\\[8pt]
&\dps=\frac{1}{\Gamma(1-\alpha)}\int_{t_{n}}^{t_{n+{1\over 2}}}(t_{n+{1\over 2}}-s)^{-\alpha}(\phi_{t}(\x,t_{n+\frac{1}{2}})-\frac{u^{n+1}-u^{n}}{\Delta t_{n}})ds\\[8pt]
&\quad\quad\quad \dps+\frac{1}{\Gamma(1-\alpha)}\int_{t_{n}}^{t_{n+{1\over 2}}}(t_{n+{1\over 2}}-s)^{-\alpha}\partial_{t}^{2}\phi(\x,\theta_{s})(s-t_{n+\frac{1}{2}})ds,\quad \theta_{s}\in(s,t_{n+\frac{1}{2}}).\\[8pt]
\ery
Noting $\Delta t_{k-1}\leq \Delta t_{k}$ for graded mesh, we have following estimates
\bry
|\widehat{R}^{n}|&\dps\leq\frac{\alpha\dps\max_{0<t<t_{n+1}}|\partial_{t}^{2}\phi(\x,t)|}{2\Gamma(1-\alpha)}\sum_{k=0}^{n-1}\int_{t_{k}}^{t_{k+1}}(s-t_{k})(t_{k+1}-s)(t_{n+{1\over 2}}-s)^{-\alpha-1}ds\\[8pt]
&\dps\leq\frac{\alpha\dps\max_{0<t<t_{n+1}}|\partial_{t}^{2}\phi(\x,t)|\Delta t_{n-1}^{2}}{8\Gamma(1-\alpha)}\sum_{k=0}^{n-1}\int_{t_{k}}^{t_{k+1}}(t_{n+{1\over 2}}-s)^{-\alpha-1}ds\\[8pt]
&\dps=\frac{\alpha\dps\max_{0<t<t_{n+1}}|\partial_{t}^{2}\phi(\x,t)|\Delta t_{n-1}^{2}}{8\Gamma(1-\alpha)}\int_{0}^{t_{n}}(t_{n+{1\over 2}}-s)^{-\alpha-1}ds\\[8pt]
&\dps=\frac{\dps\max_{0<t<t_{n+1}}|\partial_{t}^{2}\phi(\x,t)|\Delta t_{n-1}^{2}}{8\Gamma(1-\alpha)}\Big(\frac{2^{\alpha}}{\Delta t_{n}^{\alpha}}-\frac{1}{t^{\alpha}_{n+\frac{1}{2}}}\Big)\leq\frac{\dps\max_{0<t<t_{n+1}}|\partial_{t}^{2}\phi(\x,t)|\Delta t_{n}^{2-\alpha}}{2^{3-\alpha}\Gamma(1-\alpha)}\\[8pt]
\ery
and
\bry
|\widehat{R}^{n+{1\over 2}}|&\dps\leq\frac{\dps\max_{0<t<t_{n+1}}|\partial_{t}^{2}\phi(\x,t)|}{\Gamma(1-\alpha)}\int_{t_{n}}^{t_{n+{1\over 2}}}(t_{n+{1\over 2}}-s)^{1-\alpha}ds+\mathcal{O}(\Delta t_{n}^{3-\alpha})\\[8pt]
&\dps=\frac{\dps\max_{0<t<t_{n+1}}|\partial_{t}^{2}\phi(\x,t)|\Delta t_{n}^{2-\alpha}}{2^{2-\alpha}(2-\alpha)\Gamma(1-\alpha)}+\mathcal{O}(\Delta t_{n}^{3-\alpha}).\\[8pt]
\ery
Then we complete the proof.
\end{proof}

It is readily seen that the approximation $L^{\alpha}_{n+{1\over 2}}\phi$ to $^{C}_{0}{}\!\!D^{\alpha}_{t_{n+{1\over 2}}}\phi$
is of $2-\alpha$ order,
and the remaining approximation is of second order. Thus the scheme \eqref{L1_CN} is expected to have $2-\alpha$ order
convergence.

\subsection {Stability}
We will provide a stability proof for the scheme \eqref{L1_CN} in the uniform mesh case.  The case of the graded mesh seems
to be much technique, and requires further investigation.
The following lemma plays a key role in proving the stability for the uniform mesh.

\begin{lemma}\label{lem1}
If the mesh is uniform, it holds for any $(u_{0},u_{1},\cdots,u_{n})^T \in\mathbb{R}^{n+1}:$
\bq\label{posit_d}
\dps \sum_{k=0}^{n}\sum_{j=0}^{k}\widetilde{b}_{k-j}u_{j}u_{k}>0,
\eq
where the coefficient set $\{\widetilde{b}_{k}\}_{0}^{n}$ is defined in \eqref{uniform_CN}.
\end{lemma}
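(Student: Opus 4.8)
The plan is to show that the quadratic form with coefficients $\widetilde{b}_{k}$ is positive definite by verifying that the generating sequence $\{\widetilde{b}_{k}\}$ is a \emph{positive definite sequence} in the sense relevant to lower-triangular Toeplitz forms. A standard and robust way to do this is to establish that $\widetilde{b}_{0}>\widetilde{b}_{1}>\cdots>\widetilde{b}_{n}>0$ together with convexity, $\widetilde{b}_{k-1}-2\widetilde{b}_{k}+\widetilde{b}_{k+1}\ge 0$; by the classical criterion (used already for the L1 weights $b_{k}$ to get \eqref{posit}, see e.g. \cite{TYZ18} and the references therein on L1-type kernels), any such monotone, nonnegative, convex sequence yields a strictly positive quadratic form $\sum_{k,j}\widetilde{b}_{k-j}u_{j}u_{k}$ unless all $u_{j}$ vanish. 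So the real content is to check monotonicity, positivity and convexity of the modified weights in \eqref{uniform_CN}.

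First I would record that $\widetilde{b}_{k}$ for $k\ge 1$ is, up to the fixed constant $\tau^{1-\alpha}/\Gamma(2-\alpha)$, the increment $g(k+\tfrac12)-g(k-\tfrac12)$ of the concave increasing function $g(x)=x^{1-\alpha}$ (recall $0<\alpha<1$). Concavity of $g$ immediately gives $\widetilde{b}_{k}>0$ and $\widetilde{b}_{k}$ decreasing in $k$ for $k\ge 1$, and it gives convexity of $\{\widetilde{b}_{k}\}_{k\ge 1}$ because $\widetilde{b}_{k-1}-2\widetilde{b}_{k}+\widetilde{b}_{k+1}=\big[g(k+\tfrac32)-g(k+\tfrac12)\big]-2\big[g(k+\tfrac12)-g(k-\tfrac12)\big]+\big[g(k-\tfrac12)-g(k-\tfrac32)\big]$ is a second difference of a function whose own first differences are decreasing. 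The delicate point is the \emph{boundary term} $\widetilde{b}_{0}=\tau^{1-\alpha}/(\Gamma(2-\alpha)2^{1-\alpha})$, which is \emph{not} of the increment form (it comes from integrating only over the half interval $[t_n,t_{n+1/2}]$). So I must separately check $\widetilde{b}_{0}>\widetilde{b}_{1}$ and the convexity inequality at $k=1$, i.e. $\widetilde{b}_{0}-2\widetilde{b}_{1}+\widetilde{b}_{2}\ge 0$; concretely, $\widetilde{b}_{0}-\widetilde{b}_{1}=\frac{\tau^{1-\alpha}}{\Gamma(2-\alpha)}\big[(\tfrac12)^{1-\alpha}-(\tfrac32)^{1-\alpha}+(\tfrac12)^{1-\alpha}\big]=\frac{\tau^{1-\alpha}}{\Gamma(2-\alpha)}\big[2(\tfrac12)^{1-\alpha}-(\tfrac32)^{1-\alpha}\big]$, which is positive since $2\cdot 2^{\alpha-1}=2^{\alpha}>2^{1-\alpha}(\tfrac{3}{2})^{1-\alpha}\cdot\!$ reduces to $2^{\alpha}>3^{1-\alpha}$, true for $0<\alpha<1$ as one checks by a one–variable monotonicity argument on $\alpha\mapsto \alpha\ln 2+(\alpha-1)\ln 3$.

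The remaining step is to turn monotonicity$+$convexity$+$positivity into strict positive definiteness of the form. I would do this by the Abel–summation identity: writing $S_k=u_0+\cdots+u_k$ one has the standard rearrangement
\bq\label{abel_pd}
2\sum_{k=0}^{n}\sum_{j=0}^{k}\widetilde{b}_{k-j}u_{j}u_{k}
=\widetilde{b}_{0}\Big(u_{n}^{2}+\sum_{k=0}^{n}S_{k}^{2}\Big)\!\Big/\!\cdots,
\eq
more precisely the classical decomposition expressing the left side as a nonnegative combination of $\{S_k-S_{j}\}^2$-type terms with coefficients given by the second differences $\widetilde{b}_{k-1}-2\widetilde{b}_{k}+\widetilde{b}_{k+1}$, plus boundary contributions proportional to $\widetilde{b}_{n}$ and $\widetilde{b}_{0}$; since every coefficient is $\ge 0$ and the $\widetilde{b}_{0}$-term is strictly positive unless all partial sums vanish (hence all $u_j=0$), the strict inequality \eqref{posit_d} follows. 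The main obstacle is purely the boundary weight $\widetilde b_0$: one must confirm the inequality $2^{\alpha}>3^{1-\alpha}$ (equivalently $\widetilde b_0>\widetilde b_1$) and that convexity is not destroyed at $k=1$; once these are in hand the rest is the routine Abel-summation/positive-definite-kernel machinery already invoked for \eqref{posit}.
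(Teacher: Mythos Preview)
Your strategy has a genuine gap at the boundary weight. The inequality $\widetilde b_{0}>\widetilde b_{1}$ that you rely on is \emph{false} for small $\alpha$. Indeed, with $\beta:=1-\alpha$,
\[
\widetilde b_{0}-\widetilde b_{1}\ \propto\ 2\cdot(1/2)^{\beta}-(3/2)^{\beta}
=2^{1-\beta}-(3/2)^{\beta},
\]
and $2^{1-\beta}>(3/2)^{\beta}$ is equivalent to $2>3^{\beta}$, i.e.\ $\alpha>1-\log_{3}2\approx 0.369$. (Your reduction to $2^{\alpha}>3^{1-\alpha}$ contains an extra factor $2^{1-\alpha}$ on the right, but either version fails for small $\alpha$.) For instance, at $\alpha=0.2$ one computes $\widetilde b_{0}\propto 0.574$ while $\widetilde b_{1}\propto 0.809$. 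The boundary convexity condition $\widetilde b_{0}-2\widetilde b_{1}+\widetilde b_{2}\ge 0$ likewise fails in this range. Hence the ``monotone $+$ convex $+$ positive $\Rightarrow$ positive definite'' machinery, which works for the standard L1 weights $b_{k}$ in \eqref{posit}, does not apply to $\{\widetilde b_{k}\}$ uniformly in $\alpha\in(0,1)$, and the Abel-summation decomposition you sketch would produce negative coefficients.

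This is precisely why the paper abandons the direct kernel-criterion route. Instead it compares $\widetilde b_{k}$ to the coefficients $\widehat b_{k}$ arising from the \emph{continuous} positive form $\mathcal A^{n}_{\alpha}(u,u)$ (where $u$ is the piecewise-constant extension), writes
\[
2\sum_{k}\sum_{j\le k}\widetilde b_{k-j}u_{j}u_{k}
=\tfrac{2}{\tau}\mathcal A^{n}_{\alpha}(u,u)+B,
\]
and then shows the correction term $B$ is itself a positive-definite quadratic form by checking that the difference sequence $c_{k}=\widetilde b_{k}-\widehat b_{k}$ is negative for $k\ge 1$ with $\sum_{k\ge 1}|c_{k}|$ strictly smaller than the chosen diagonal $c_{0}$, so the associated Toeplitz matrix is strictly diagonally dominant. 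The point is that the ``bad'' boundary behaviour of $\widetilde b_{0}$ is absorbed into $\mathcal A^{n}_{\alpha}$, and only the residual $c_{k}$ needs a sign/size estimate. If you want to repair your approach, you would need an argument that does not assume $\widetilde b_{0}\ge\widetilde b_{1}$; the paper's comparison-to-$\widehat b_{k}$ trick is one such argument.
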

\begin{proof}
We first define a piecewise constant function $u(t)$ as follows:
\beq
u(t):=
\begin{cases}
\begin{array}{r@{}l}
&u_{\lfloor t/\tau\rfloor},\quad 0\leq t<(n+1)\tau, \\[9pt]
&0,\quad\quad\mbox{otherwise,}
\end{array}
\end{cases}
\eeq
where $\lfloor t/\tau\rfloor$ denote the integer part of the real number $t/\tau.$
Obviously $u(t)$ is a function in $L^{2}(0,(n+1)\tau)$.
Let's define
\bex
\mathcal{A}^{n}_{\alpha}(u,u):=\frac{1}{\Gamma(1-\alpha)}\int_{0}^{(n+1)\tau}\int_{0}^{t}(t-s)^{-\alpha}u(s)u(t)dsdt.
\eex
It has been proved in \cite{TYZ18} Lemma 2.1 that
\be\label{An}
\mathcal{A}^{n}_{\alpha}(u,u) \ge 0.
\ee
Furthermore, a direct calculation shows
\brr\label{posit_conti}
\mathcal{A}^{n}_{\alpha}(u,u)
&\dps=\frac{1}{2\Gamma(1-\alpha)}\int_{0}^{(n+1)\tau}\int_{0}^{(n+1)\tau}|t-s|^{-\alpha}u(s)u(t)dsdt\\[9pt]
&\dps=\frac{1}{2\Gamma(1-\alpha)}\sum_{k=0}^{n}u_{k}\sum_{j=0}^{n}u_{j}\int_{k\tau}^{(k+1)\tau}\int_{j\tau}^{(j+1)\tau}|t-s|^{-\alpha}dsdt\\[9pt]
&\dps:=\frac{\tau}{2}\Big[\widehat{b}_{0}\sum_{k=0}^{n}u^{2}_{k}+\sum_{k=0}^{n}\sum_{j=0}^{n}\widehat{b}_{|k-j|}u_{k}u_{j}\Big],
\err
where $\widehat{b}_{0}=\frac{1}{\Gamma(3-\alpha)}\tau^{1-\alpha}$ and
\bq\label{coef_hatb}
\dps\widehat{b}_{k}=\frac{\tau^{1-\alpha}}{\Gamma(3-\alpha)}[(k+1)^{2-\alpha}-2k^{2-\alpha}+(k-1)^{2-\alpha}], ~k=1,2,\cdots.
\eq
Let
$$B:=(\widetilde{b}_{0}-\widehat{b}_{0})\sum_{k=0}^{n}u^{2}_{k}+\sum_{k=0}^{n}\sum_{j=0}^{n}(\widetilde{b}_{|k-j|}-\widehat{b}_{|k-j|})u_{j}u_{k}.$$
Then it follows from \eqref{posit_conti} and \eqref{An}
\beq
\dps 2\sum_{k=0}^{n}\sum_{j=0}^{k}\widetilde{b}_{k-j}u_{j}u_{k}=\widetilde{b}_{0}\sum_{k=0}^{n}u^{2}_{k}+\sum_{k=0}^{n}\sum_{j=0}^{n}\widetilde{b}_{k-j}u_{j}u_{k}=B+\frac{2}{\tau}\mathcal{A}^{n}_{\alpha}(u,u)>B.
\eeq
Therefore
\eqref{posit_d} is true if $B>0$, which we want to prove below.\\
Let $c_{k}=\widetilde{b}_{|k|}-\widehat{b}_{|k|},~k=\pm1,\pm2,\cdots,n$. Define the function
$$\dps g(x):=\frac{(x+1)^{2-\alpha}-x^{2-\alpha}}{2-\alpha}, \ \ x\geq0.$$
We deduce from the Taylor expansion and the definitions of $\widetilde{b}_{k}$ and $\widehat{b}_{k}$ in \eqref{uniform_CN} and \eqref{coef_hatb} that $c_{k}=c_{-k}$, and, for $k\geq1$,
\bry
c_{k}&\dps=\widetilde{b}_{|k|}-\widehat{b}_{|k|}=\frac{\tau^{1-\alpha}}{\Gamma(2-\alpha)}\Big[(k+\frac{1}{2})^{1-\alpha}-(k-\frac{1}{2})^{1-\alpha}-\frac{(k+1)^{2-\alpha}-2k^{2-\alpha}+(k-1)^{2-\alpha}}{2-\alpha}\Big]\\[9pt]
&\dps=\frac{\tau^{1-\alpha}}{\Gamma(2-\alpha)}[g'(k-\frac{1}{2})-g(k)+g(k-1)]\\[9pt]
&=\dps\frac{\tau^{1-\alpha}}{\Gamma(2-\alpha)}\Big[g'(k-\frac{1}{2})
-\Big(g(k-\frac{1}{2})+\frac{g'(k-\frac{1}{2})}{2}+\frac{g''(\zeta_{1})}{8}\Big)
+\Big(g(k-\frac{1}{2})-\frac{g'(k-\frac{1}{2})}{2}+\frac{g''(\zeta_{2})}{8}\Big)\Big]\\[9pt]
&=\dps\frac{\tau^{1-\alpha}}{\Gamma(2-\alpha)}\frac{g''(\zeta_{2})-g''(\zeta_{1})}{8},
\ery
where $\zeta_{1}\in (k-\frac{1}{2},k)$ and $\zeta_{2}\in (k-1,k-\frac{1}{2})$.
Since $g'''(x)>0$ for all $x>0$, then it holds $g''(\zeta_{2})<g''(\zeta_{1})$. Therefore, we have $c_{k}=c_{-k}<0$ for $k=1,2,\cdots,n$, and
\brr\label{equ1}
\dps\sum_{k=1}^{n}|c_{k}|&\dps=\sum_{k=1}^{n}\frac{\tau^{1-\alpha}}{\Gamma(2-\alpha)}\Big[\frac{(k+1)^{2-\alpha}-2k^{2-\alpha}+(k-1)^{2-\alpha}}{2-\alpha}-\Big((k+\frac{1}{2})^{1-\alpha}-(k-\frac{1}{2})^{1-\alpha}\Big)\Big]\\[11pt]
&\dps=\frac{\tau^{1-\alpha}}{\Gamma(2-\alpha)}\Big[\frac{(n+1)^{2-\alpha}-2n^{2-\alpha}}{2-\alpha}-\frac{1}{2-\alpha}-(n+\frac{1}{2})^{1-\alpha}+\frac{1}{2^{1-\alpha}}\Big].
\err
Making use of Taylor expansion with integral remainder and differential mean value theorem, we obtain
\brr\label{equ2}
&\dps\frac{(n+1)^{2-\alpha}-2n^{2-\alpha}}{2-\alpha}-(n+\frac{1}{2})^{1-\alpha}\\[9pt]
=&\dps\frac{1}{2-\alpha}\Big\{\Big[(n+\frac{1}{2})^{2-\alpha}+\frac{2-\alpha}{2}(n+\frac{1}{2})^{1-\alpha}+(2-\alpha)(1-\alpha)\int_{n+1/2}^{n+1}(n+1-s)s^{-\alpha}ds\Big]\\[9pt]
&\dps-\Big[(n+\frac{1}{2})^{2-\alpha}-\frac{2-\alpha}{2}(n+\frac{1}{2})^{1-\alpha}+(2-\alpha)(1-\alpha)\int_{n+1/2}^{n}(n-s)s^{-\alpha}ds\Big]\Big\}-(n+\frac{1}{2})^{1-\alpha}\\[9pt]
=&\dps(1-\alpha)\Big[\int_{n+1/2}^{n+1}(n+1-s)s^{-\alpha}ds-\int_{n+1/2}^{n}(n-s)s^{-\alpha}ds\Big]\\[9pt]
=&\dps(1-\alpha)\int_{0}^{1/2}s[(n+1-s)^{-\alpha}-(n+s)^{-\alpha}]d s\leq\dps-\alpha(1-\alpha)(n+1)^{-\alpha-1}\int_{0}^{1/2}s(1-2s)ds\\[9pt]
=&\dps-\frac{\alpha(1-\alpha)}{24}(n+1)^{-\alpha-1}.
\err
Combining \eqref{equ1} and \eqref{equ2} gives
\beq
\dps\sum_{k=1}^{n}|c_{k}|\leq\frac{\tau^{1-\alpha}}{\Gamma(2-\alpha)}\Big[\frac{1}{2^{1-\alpha}}-\frac{1}{2-\alpha}-\frac{\alpha(1-\alpha)}{24}(n+1)^{-\alpha-1}\Big].
\eeq
Let $c_{0}=\frac{\tau^{1-\alpha}}{\Gamma(2-\alpha)}[2^{\alpha}-\frac{2}{2-\alpha}-\frac{\alpha(1-\alpha)}{12}(n+1)^{-\alpha-1}]$.
We construct the matrix $C:=\{c_{k-j}\}_{k,j=0}^{n}$. Then
\beq
\dps\sum_{j=0,j\neq k}^{n}|C_{k,j}|=\sum_{j=0,j\neq k}^{n}|c_{k-j}|<2\sum_{l=1}^{n}|c_{l}|\leq c_{0}, \mbox{ for any $k=0,1,2,\cdots,n$}.
\eeq
This means $C$ is a symmetric positive definite matrix. Thus $\dps\sum_{k=0}^{n}\sum_{j=0}^{n}C_{k,j}u_{j}u_{k}>0$,
for any $(u_{0},u_{1},\cdots,u_{n})^T \in\mathbb{R}^{n+1}$.\\
It then follows from the definitions of $B$ and $C_{k,j}$ that
\bry
\dps B&=\dps\sum_{k=0}^{n}(2\widetilde{b}_{0}-\widehat{b}_{0}-c_{0})u^{2}_{k}+\sum_{k=0}^{n}\sum_{j=0}^{n}C_{k,j}u_{j}u_{k}\\[9pt]
&=\dps\frac{\tau^{1-\alpha}}{\Gamma(2-\alpha)}\frac{\alpha(1-\alpha)}{12}(n+1)^{-\alpha-1}\sum_{k=0}^{n}u^{2}_{k}+\sum_{k=0}^{n}\sum_{j=0}^{n}C_{k,j}u_{j}u_{k}>0.
\ery
This completes the proof.
\end{proof}
Now we are in a position to establish the unconditionnal stability of the L1-CN scheme \eqref{L1_CN}.

\begin{theorem} In the case of uniform mesh,
the L1-CN scheme \eqref{L1_CN} is unconditionally stable in the sense that
a discrete energy is always bounded during the time stepping.
More precisely, it holds
\bq\label{cn_s}
\dps \widetilde{E}^{n}_{\varepsilon,\theta} \le \widetilde{E}^{0}_{\varepsilon,\theta}, \
n=1,2\dots,
\eq
where the discrete energy $E^{n}_{\varepsilon,\theta}$ is defined as:
\bq
\widetilde{E}^{n}_{\varepsilon,\theta}:=\frac{\varepsilon^{2}-\theta}{2}\|\nabla\phi^{n}\|^{2}_{0}+\frac{\theta}{4}\|\nabla(\phi^{n}-\phi^{n-1})\|_{0}^{2}+|R^{n}|^{2}.
\eq
\end{theorem}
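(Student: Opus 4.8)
The plan is to reproduce the energy estimate of Theorem~\ref{th1}, now using Crank--Nicolson test functions; the only genuinely new difficulty is that the stabilization term $\theta\Delta\phi^{n+\frac12}$ is evaluated at the extrapolated point $\phi^{n+\frac12}$ rather than at the arithmetic mean $\tfrac12(\phi^{n+1}+\phi^{n})$. Throughout we work on the uniform mesh, so $\Delta t_{n+1}=\tau$, the coefficients $\widetilde b_k$ are those of \eqref{uniform_CN}, and $\phi^{n+\frac12}=\phi^{n}+\tfrac12(\phi^{n}-\phi^{n-1})$. Write $\delta\phi^{k}:=\phi^{k}-\phi^{k-1}$ (with $\delta\phi^{0}=0$ since $\phi^{-1}:=\phi^{0}$), $\gamma^{n+\frac12}:=-\theta\Delta\phi^{n+\frac12}+F'(\phi^{n+\frac12})$, and $\lambda^{n}:=\dfrac{R^{n+1}+R^{n}}{2\sqrt{E_{\theta}^{n+1/2}+C_{0}}}$, so that \eqref{L1_CN_1} reads $L^{\alpha}_{n+\frac12}\phi-\varepsilon^{2}\tfrac12\Delta(\phi^{n+1}+\phi^{n})+\theta\Delta\phi^{n+\frac12}+\lambda^{n}\gamma^{n+\frac12}=0$.

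First I would take the $L^{2}$-inner product of \eqref{L1_CN_1} with $\delta\phi^{n+1}/\tau$ and test \eqref{L1_CN_2} against $R^{n+1}+R^{n}$; the latter gives $\lambda^{n}(\gamma^{n+\frac12},\delta\phi^{n+1}/\tau)=\big(|R^{n+1}|^{2}-|R^{n}|^{2}\big)/\tau$ by the difference-of-squares identity, and substituting this into the former eliminates the nonlinear coupling term. After integration by parts in the $\varepsilon^{2}$-term this leaves
\[
\Big(L^{\alpha}_{n+\frac12}\phi,\tfrac{\delta\phi^{n+1}}{\tau}\Big)+\tfrac{\varepsilon^{2}}{2\tau}\big(\|\nabla\phi^{n+1}\|_{0}^{2}-\|\nabla\phi^{n}\|_{0}^{2}\big)-\tfrac{\theta}{\tau}\big(\nabla\phi^{n+\frac12},\nabla\delta\phi^{n+1}\big)+\tfrac{|R^{n+1}|^{2}-|R^{n}|^{2}}{\tau}=0 .
\]

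The crux is to handle $-\tfrac{\theta}{\tau}(\nabla\phi^{n+\frac12},\nabla\delta\phi^{n+1})$. Using the decomposition $\phi^{n+\frac12}=\tfrac12(\phi^{n+1}+\phi^{n})-\tfrac12(\delta\phi^{n+1}-\delta\phi^{n})$, difference of squares, and the three-term identity \eqref{q1} applied to the gradient fields, one obtains
\[
-\tfrac{\theta}{\tau}\big(\nabla\phi^{n+\frac12},\nabla\delta\phi^{n+1}\big)=-\tfrac{\theta}{2\tau}\big(\|\nabla\phi^{n+1}\|_{0}^{2}-\|\nabla\phi^{n}\|_{0}^{2}\big)+\tfrac{\theta}{4\tau}\big(\|\nabla\delta\phi^{n+1}\|_{0}^{2}-\|\nabla\delta\phi^{n}\|_{0}^{2}\big)+\tfrac{\theta}{4\tau}\|\nabla(\delta\phi^{n+1}-\delta\phi^{n})\|_{0}^{2}.
\]
Merging the first right-hand term with the $\varepsilon^{2}$-contribution produces the factor $\varepsilon^{2}-\theta$; the second term is exactly the increment of the $\tfrac{\theta}{4}\|\nabla(\phi^{n}-\phi^{n-1})\|_{0}^{2}$ piece of $\widetilde E^{n}_{\varepsilon,\theta}$; and the last term is nonnegative since $0\le\theta\le\varepsilon^{2}$. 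Hence the previous identity collapses to $\dfrac{\widetilde E^{n+1}_{\varepsilon,\theta}-\widetilde E^{n}_{\varepsilon,\theta}}{\tau}+\dfrac{\theta}{4\tau}\|\nabla(\delta\phi^{n+1}-\delta\phi^{n})\|_{0}^{2}=-\big(L^{\alpha}_{n+\frac12}\phi,\delta\phi^{n+1}/\tau\big)$, so that $\dfrac{\widetilde E^{n+1}_{\varepsilon,\theta}-\widetilde E^{n}_{\varepsilon,\theta}}{\tau}\le-\big(L^{\alpha}_{n+\frac12}\phi,\delta\phi^{n+1}/\tau\big)$. Summing over $n=0,\dots,N-1$ and recalling from \eqref{LCN} that $L^{\alpha}_{n+\frac12}\phi=\tfrac1\tau\sum_{k=0}^{n}\widetilde b_{n-k}\delta\phi^{k+1}$, the accumulated right-hand side equals $-\tfrac1{\tau^{2}}\int_{\Omega}\big(\sum_{n=0}^{N-1}\sum_{k=0}^{n}\widetilde b_{n-k}\,\delta\phi^{k+1}\,\delta\phi^{n+1}\big)\,d\x\le 0$ by Lemma~\ref{lem1} applied pointwise in $\x$ with $u_{k}=\delta\phi^{k+1}(\x)$ and then integrated; this gives $\widetilde E^{N}_{\varepsilon,\theta}\le\widetilde E^{0}_{\varepsilon,\theta}$, i.e. \eqref{cn_s}.

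I expect the main obstacle to be the algebraic step in the third paragraph: one must find the particular rewriting of $\phi^{n+\frac12}$ that simultaneously recovers the coefficient $\varepsilon^{2}-\theta$ on $\|\nabla\phi^{n+1}\|_{0}^{2}$, generates a telescoping $\tfrac{\theta}{4}\|\nabla(\phi^{n}-\phi^{n-1})\|_{0}^{2}$ term matching the definition of $\widetilde E^{n}_{\varepsilon,\theta}$, and leaves only a nonnegative residual — which is precisely what forces the somewhat unusual form of the discrete energy. Once this identity is established, the rest is a verbatim transcription of the first-order argument, with Lemma~\ref{lem1} playing the role of \eqref{posit}; the graded-mesh case is not covered because the analogue of Lemma~\ref{lem1} for the nonuniform coefficients $\widetilde b_k$ is unavailable.
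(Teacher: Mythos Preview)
Your proposal is correct and follows essentially the same approach as the paper. The only cosmetic difference is in how the stabilization term is unpacked: you use the decomposition $\phi^{n+\frac12}=\tfrac12(\phi^{n+1}+\phi^{n})-\tfrac12(\delta\phi^{n+1}-\delta\phi^{n})$ together with \eqref{q1}, whereas the paper writes the combined Laplacian contribution directly as $\tfrac{\varepsilon^{2}-\theta}{2\tau}(\|\nabla\phi^{n+1}\|_0^{2}-\|\nabla\phi^{n}\|_0^{2})+\tfrac{\theta}{2\tau}(\nabla(\phi^{n+1}-2\phi^{n}+\phi^{n-1}),\nabla(\phi^{n+1}-\phi^{n}))$ and then invokes the three-term identity $2(a^{n+1}-a^{n})(a^{n+1}-2a^{n}+a^{n-1})=(a^{n+1}-a^{n})^{2}-(a^{n}-a^{n-1})^{2}+(a^{n+1}-2a^{n}+a^{n-1})^{2}$; since $\delta\phi^{n+1}-\delta\phi^{n}=\phi^{n+1}-2\phi^{n}+\phi^{n-1}$, these are the same computation, and the subsequent summation and appeal to Lemma~\ref{lem1} are identical.
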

\begin{proof}
By taking the inner products of \eqref{L1_CN_1} and \eqref{L1_CN_2} with $\frac{\phi^{n+1}-\phi^{n}}{\tau}$ and $R^{n+1}+R^{n}$ respectively, we have
\bry
 &\dps\Big(L^{\alpha}_{n+{1\over 2}}\phi,\frac{\phi^{n+1}-\phi^{n}}{\tau}\Big)
 +\frac{\varepsilon^{2}-\theta}{2\tau}\big(\|\nabla\phi^{n+1}\|^{2}-\|\nabla\phi^{n}\|^{2}\big)
 +\frac{\theta}{2\tau}\big(\nabla(\phi^{n+1}-2\phi^{n}+\phi^{n-1}),\nabla(\phi^{n+1}-\phi^{n})\big)\\[9pt]
 &\hspace{4cm}\dps+\frac{R^{n+1}+R^{n}}{2\sqrt{E_{\theta}^{n+\frac{1}{2}}+C_{0}}}\Big(-\theta\Delta\phi^{n+\frac{1}{2}}+F'(\phi^{n+\frac{1}{2}}),\frac{\phi^{n+1}-\phi^{n}}{\tau}\Big)=0,\\[20pt]
 &\dps\frac{|R^{n+1}|^{2}-|R^{n}|^{2}}{\tau}
 =\frac{R^{n+1}+R^{n}}{2\sqrt{E_{\theta}^{n+\frac{1}{2}}+C_{0}}}\Big(-\theta\Delta\phi^{n+\frac{1}{2}}+F'(\phi^{n+\frac{1}{2}}),\frac{\phi^{n+1}-\phi^{n}}{\tau}\Big).
\ery
Then we apply the identities
\beq
2(a^{n+1}-a^{n})(a^{n+1}-2a^{n}+a^{n-1})=(a^{n+1}-a^{n})^{2}-(a^{n}-a^{n-1})^{2}+(a^{n+1}-2a^{n}+a^{n-1})^2.
\eeq
 to the above equations
and drop some non-essential positive terms to obtain
\beq
\dps\frac{\widetilde{E}^{n+1}_{\varepsilon,\theta}-\widetilde{E}^{n}_{\varepsilon,\theta}}{\tau}
\leq-\Big(L^{\alpha}_{n+{1\over 2}}\phi,\frac{\phi^{n+1}-\phi^{n}}{\tau}\Big),~~n=0,1,2,\cdots.
\eeq
Summing up the above inequality from 0 to $n$ yields
\bex
\dps\frac{\widetilde{E}^{n+1}_{\varepsilon,\theta}-\widetilde{E}^0_{\varepsilon,\theta}}{\tau}
\leq - \sum^n_{k=0} \Big(L^{\alpha}_{k+{1\over 2}}\phi,\frac{\phi^{k+1}-\phi^k}{\tau}\Big),~~n=0,1,2,\cdots.
\eex
We know from \eqref{LCN} that the right hand side is equal to
$-\sum_{k=0}^{n}\sum_{j=0}^{k}\widetilde{b}_{k-j} \frac{\phi^{j+1}-\phi^j}{\tau} \frac{\phi^{k+1}-\phi^k}{\tau}$, which is non-positive according to
Lemma \ref{lem1}. Thus $\widetilde{E}^{n}_{\varepsilon,\theta} \le \widetilde{E}^0_{\varepsilon,\theta}, n=1,2,\cdots$.
This proves the theorem.
\end{proof}

Since the L1-CN scheme \eqref{L1_CN} also enjoys the unconditional energy stability similar to \eqref{eq2} for the uniform mesh, we can derive $H^{2}$ bound for $\phi^{n}$ similar to Theorem \ref{the1}.
\begin{theorem}\label{the2}
Assume $\phi_{0}\in H^{3}(\Omega)$ and $F\in C^{3}(\mathbb{R})$ satisfying \eqref{asum1}.
Then for the uniform mesh and any $(n+1)<T/\tau$, the scheme \eqref{L1_CN} satisfies
\beq
\dps\sum_{k=0}^{n}\widetilde{b}_{n-k}\|\Delta\phi^{k+1}\|^{2}_{0}+\frac{\tau\varepsilon^{2}}{2}\|\nabla\Delta\phi^{n+1}\|^{2}_{0}\leq c(M_{0},\varepsilon)T+\frac{T^{1-\alpha}}{\Gamma(2-\alpha)}\|\Delta\phi_{0}\|^{2}_{0}+\frac{\tau\varepsilon^{2}}{2}\|\nabla\Delta\phi_{0}\|^{2}_{0},
\eeq
where $M_{0}$ is a positive constant depending only on $\Omega$ and $\phi_{0}$.
\end{theorem}

The L1-CN scheme \eqref{L1_CN} can be efficiently implemented
by following the lines similar to the algorithm $(i)$-$(ii)$ in the previous section.

\section{Numerical results}
\label{sect4}
\setcounter{equation}{0}

In this section, we present some numerical examples to demonstrate the efficiency of the proposed schemes
in terms of stability and accuracy. In all tests that follow, we always set $\theta=\varepsilon^{2}$ in the schemes.
The spatial discretization is the Fourier method or Legendre spectral method using numerical quadratures.
In order to test the accuracy, the error is measured by the maximum norm, i.e., $\dps\max_{1\leq n\leq M}\|\phi^{n}-\phi(t_{n})\|_{\infty}$
or $\|\phi^{M}-\phi(T)\|_{\infty}$ depending on whether or not the exact solution is exactly known.
It is worth to mention that a fast evaluation technique
based on the so-called sum-of-exponentials approach to the time fractional derivative
is used to accelerate the calculation (also to reduce the storage); see, e.g., \cite{JZZ17,YSZ17}.

\subsection{Convergence order test}
\begin{example}\label{expl1}
Consider the following time fractional Allen-Cahn equation with the periodic boundary condition:
\bq\label{TAC}
\dps^{C}_{0}{}\!\!D^{\alpha}_{t}\phi-\varepsilon^{2}\Delta \phi-\phi(1-\phi^{2})=s(\x,t),\quad
(\x,t)\in (0,2\pi)^2\times(0,T],
\eq
 where $s(\x,t)$ is a fabricated source term chosen such that
the exact solution is
$$\phi(\x,t)=0.2 t^5 \sin(x)\cos(y).$$

\end{example}
The spatial discretization used in the calculation is
the Fourier method with $128\times128$ modes.
It has been checked that this Fourier mode number is large enough
so that the spatial discretization error is negligible compared to the temporal discretization.
We present in Figure \ref{fig1}
the error at $T=1$ as functions of the time
step sizes in log-log scale. It is clearly observed that the first order scheme \eqref{semi_D} and L1-CN scheme \eqref{L1_CN} achieve the expected convergence rate  for all tested $\alpha$, i.e., first order and $2-\alpha$ order, respectively. Note that there is no
numerical instability observed during the calculation for all the time step sizes we tested.

\begin{figure*}[htbp]
\begin{minipage}[t]{0.49\linewidth}
\centerline{\includegraphics[scale=0.55]{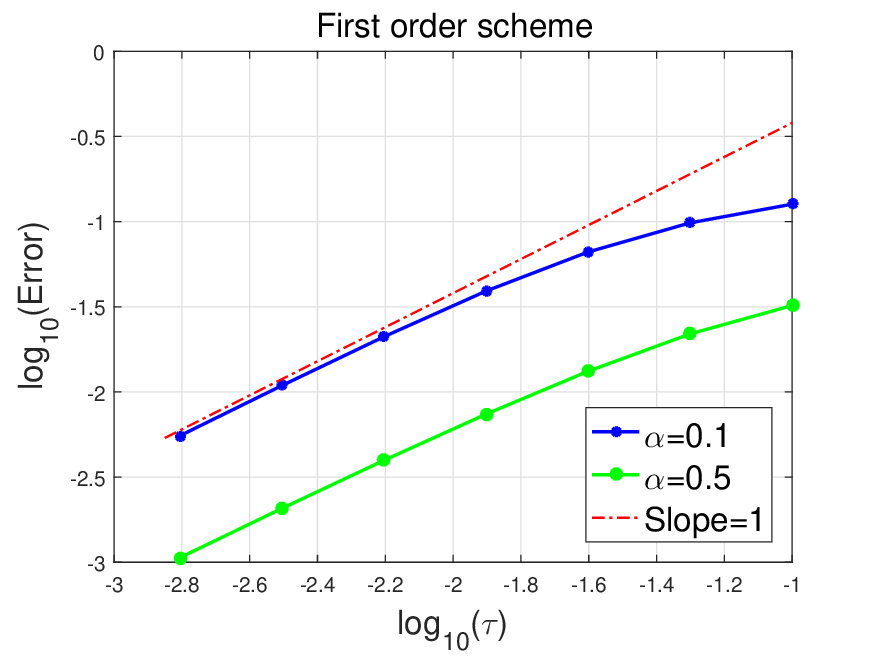}}
\centerline{(a) first order scheme for $\alpha=$0.1 and 0.5}
\end{minipage}
\vskip 3mm
\begin{minipage}[t]{0.49\linewidth}
\centerline{\includegraphics[scale=0.55]{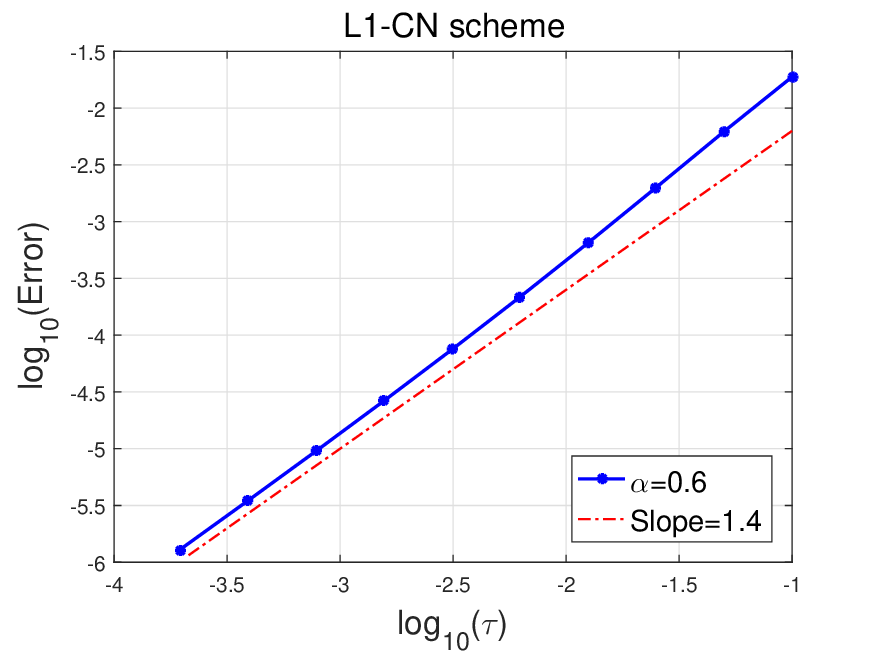}}
\centerline{(b) L1-CN scheme for $\alpha=0.6$}
\end{minipage}
\begin{minipage}[t]{0.49\linewidth}
\centerline{\includegraphics[scale=0.55]{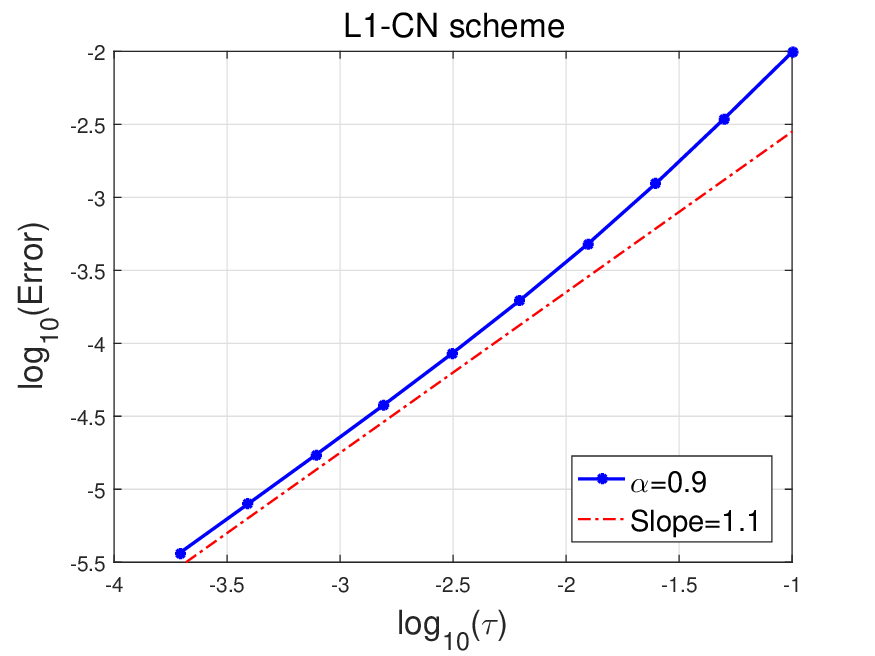}}
\centerline{(c) L1-CN scheme for $\alpha=0.9$}
\end{minipage}
\caption{
Example \ref{expl1}: Error decay at $T=1$ versus the time step sizes for the first order scheme and L1-CN scheme with $C_{0}=0$.
}\label{fig1}
\end{figure*}

\begin{example}\label{expl2}
Consider the same equation as in Example \ref{expl1}, but the Neumann boundary condition, with the exact solution
$$\phi(\x,t)=0.2(t^{\mu}+1)\cos(\pi x)\cos(\pi y), ~(\x,t)\in (-1,1)^2\times(0,T],$$
which has limited regularity at the initial time $t=0$.
\end{example}
For this Neumann problem,
the space variable is discretized by the Legendre Galerkin spectral method using polynomials of degree 32
in each spatial direction.  The purpose of
this test is to not only verify the accuracy of the schemes,
but also investigate how the regularity of the solution affects the accuracy.
In particular, we are interested to study the impact of the graded mesh parameter $r$ on the convergence rate.
The calculation is performed by using the L1-CN scheme with $M=2^{k},k=4,5,\cdots,13$.
In Figure \ref{fig2}, we
plot the $ L^{\infty}$ errors in log-log scale with respect to the maximum time step sizes $\tau$ for each fixed $M$.
It is shown in the figure that the numerical solution achieves the convergence rate $\min\{\mu r,2-\alpha\}$.
It is worth to point out that the error curves can be misleading if one only looks at the error behaviour for relatively larger time step sizes.
For example, the numerical results given in
Figure \ref{fig2} (d) and (f) for the case $\mu r>2-\alpha$ seem to make us expect a higher order accuracy.
More precisely, the error curve in Figure \ref{fig2} (d) indicates a convergence rate like $\mu r$ in the range of larger $\tau$.
However when $\tau$ decreases we clearly observe the convergence order $2-\alpha$, which is what we can expect
from a truncation error analysis for the L1 scheme.

\begin{figure*}[htbp]
\begin{minipage}[t]{0.49\linewidth}
\centerline{\includegraphics[scale=0.55]{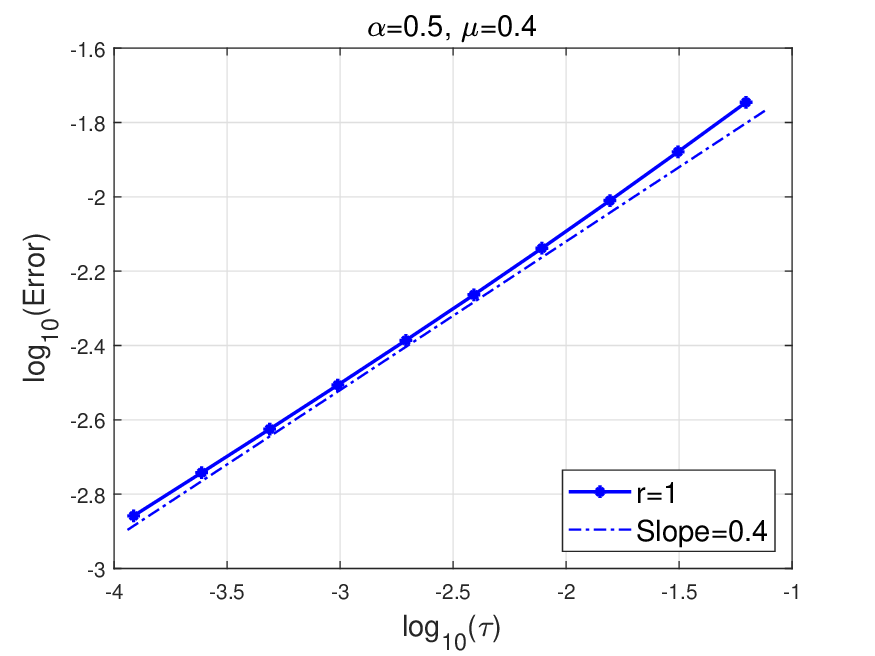}}
\centerline{(a) $r=1$}
\end{minipage}
\begin{minipage}[t]{0.49\linewidth}
\centerline{\includegraphics[scale=0.55]{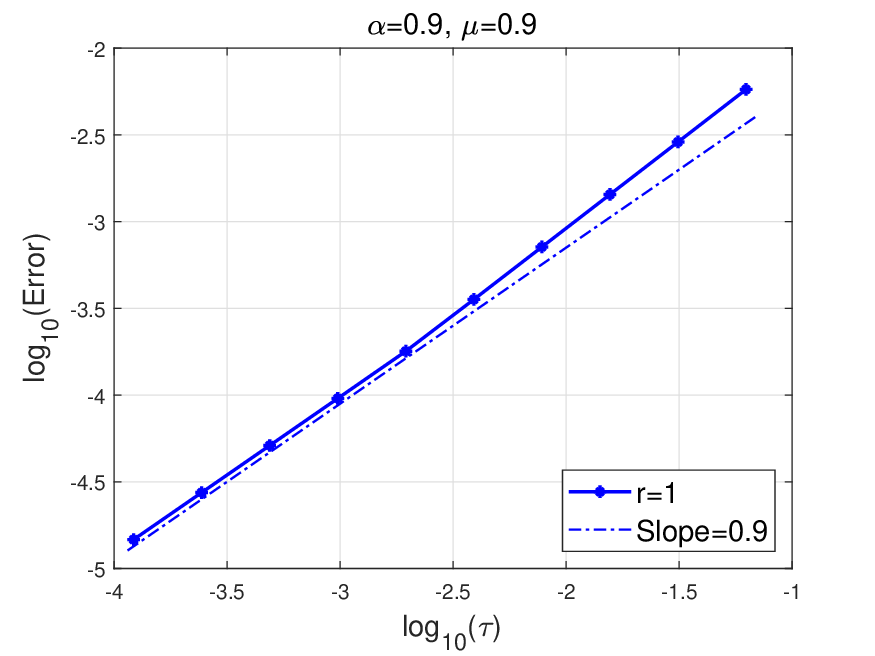}}
\centerline{(b) $r=1$}
\end{minipage}
\vskip 3mm
\begin{minipage}[t]{0.49\linewidth}
\centerline{\includegraphics[scale=0.55]{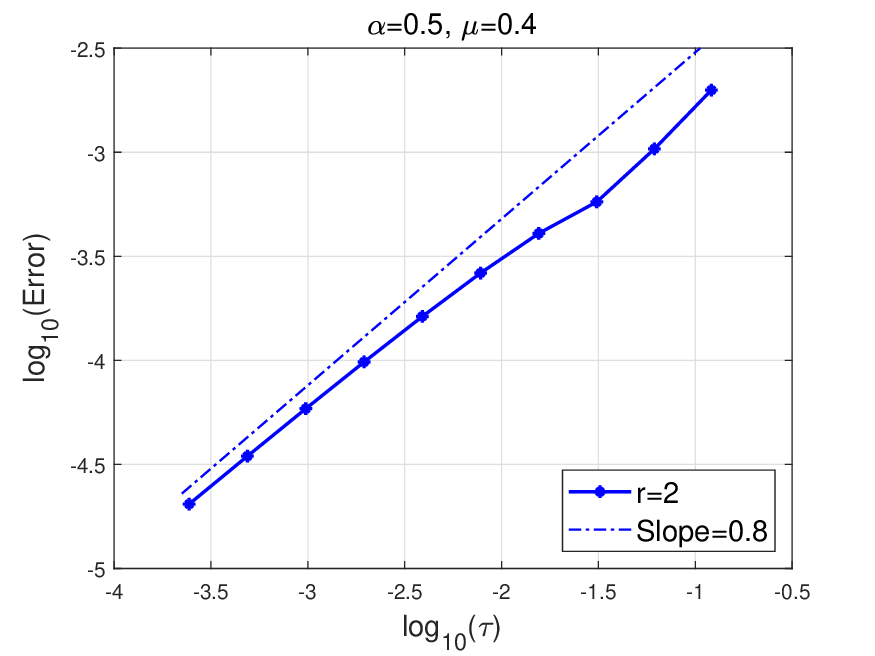}}
\centerline{(c) $r=2$}
\end{minipage}
\begin{minipage}[t]{0.49\linewidth}
\centerline{\includegraphics[scale=0.55]{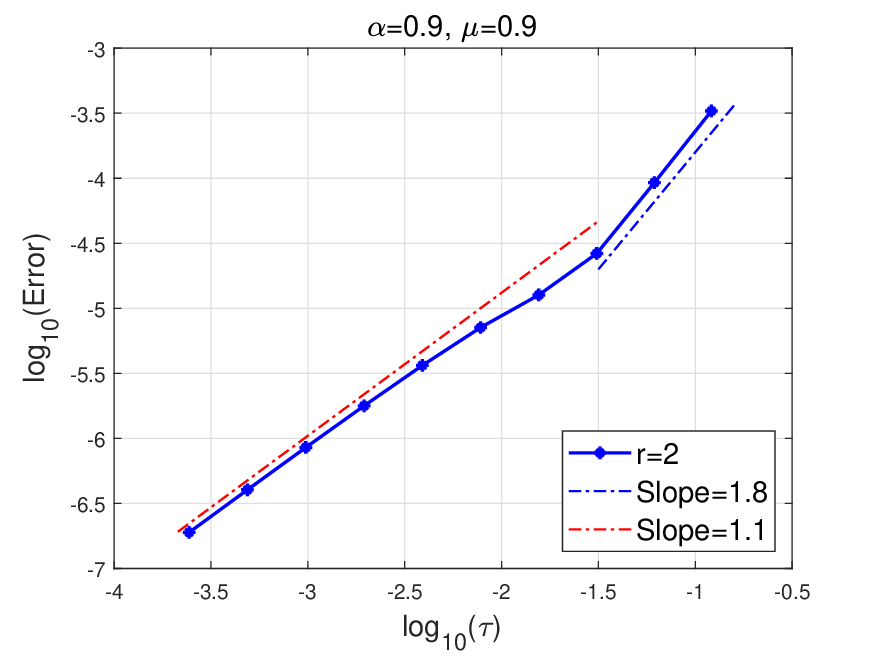}}
\centerline{(d) $r=2$}
\end{minipage}
\vskip 3mm
\begin{minipage}[t]{0.49\linewidth}
\centerline{\includegraphics[scale=0.55]{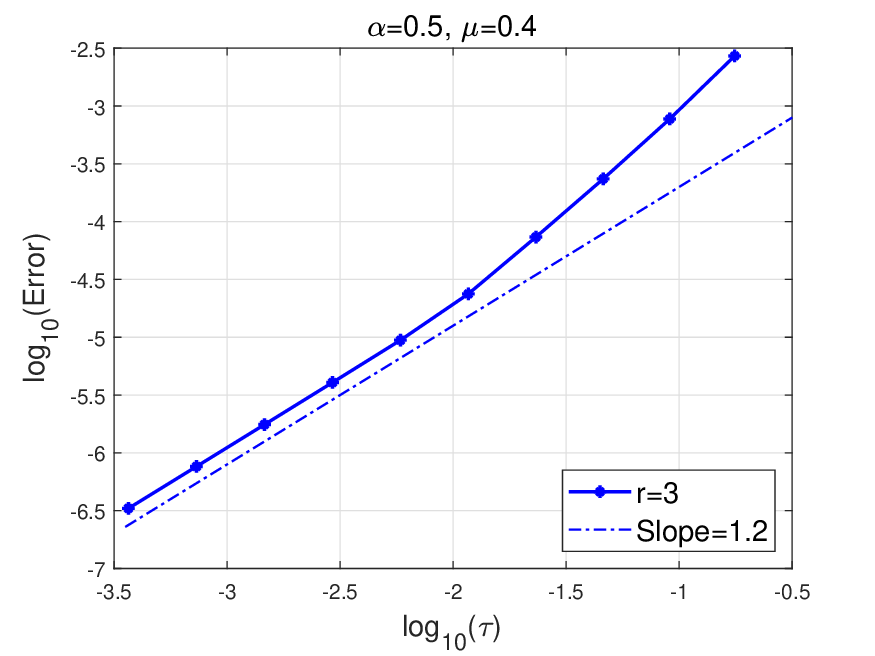}}
\centerline{(e) $r=3$}
\end{minipage}
\begin{minipage}[t]{0.49\linewidth}
\centerline{\includegraphics[scale=0.55]{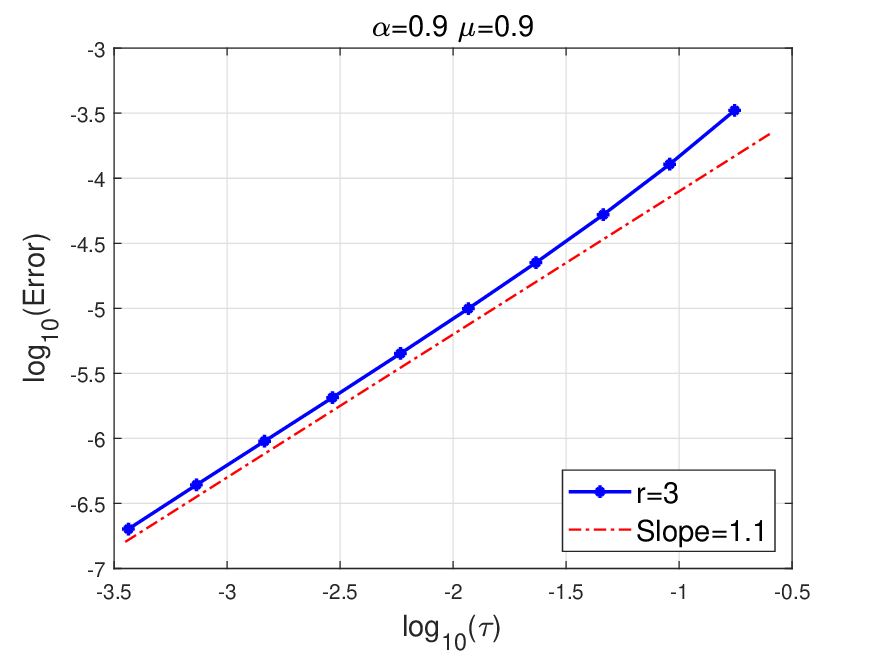}}
\centerline{(f) $r=3$}
\end{minipage}
\caption{(Example \ref{expl2}) Error history for the L1-CN scheme with $C_0=0$ for different values of the mesh parameter $r$:
the left side figures correspond to $\alpha=0.5$ and $\mu=0.4$;
the right side are for $\alpha=0.9$ and $\mu=0.9$.
}\label{fig2}
\end{figure*}

\begin{example}\label{expl3}
Consider the time fractional Allen-Cahn Neumann problem in the domain $(-1,1)\times(-1,1)$ with the double well potential
$F(\phi):=\dfrac{(\phi^2-1)^{2}}{4}$ and the initial condition
$\phi(\x,0)=\cos(4\pi x)\cos(4\pi y)$.
\end{example}

Again, we use the Legendre Galerkin method for the spatial
discretization with high enough mode number to avoid possible spatial error contamination.
The error behavior is investigated by comparing the computed
solutions to the one obtained by using the L1-CN scheme \eqref{L1_CN} with $r=3, M=5\times 10^{4}$
and Legendre polynomial space of degree $32\times32$, considered as the ``exact" solution.
In Figure \ref{fig3}, we
plot the $L^{\infty}$ errors at $T=1$ in log-log scale with respect to the time discretization parameter $\tau$. Once again the
observed error behavior in Figure \ref{fig3} demonstrates that
the proposed L1-CN scheme \eqref{L1_CN} attains the convergence rate $\min\{\alpha r,2-\alpha\}$ for all tested $\alpha$ and $r$.

The stability of the schemes is investigated using this example through a long time
calculation up to $T = 100$.
In view of the possible singularity feature of the solution,
we split the interval $[0,T]$ into two parts: $[0,1]$ and $(1,T]$.
We first compute the solution by using the graded mesh with $r=\frac{2-\alpha}{\alpha}$ in the first subinterval $[0,1]$,
then use the uniform mesh with the time step size $\Delta t$ in the second subinterval $(1,T]$.
The computed discrete energies, both $E(\phi^{n})$ and $\widetilde{E}^{n}_{\varepsilon,\theta}$,
by using the L1-CN scheme are presented in Figure \ref{fig4} (b), (d), and (f),
showing dissipative
feature during the running time for all tested values of $\alpha, M$ and $\Delta t$.
This confirms the unconditional stability of the proposed scheme.
We would like to emphasize that while the modified energy $\widetilde{E}^{n}_{\varepsilon,\theta}$ keeps dissipative
all the time, the original energy $E(\phi^{n})$ may exhibit oscillation at some time instants, especially when large time step size is
used.
This has been a well known fact in the SAV approach; see, e.g., \cite{HAX19} and the reference therein.
Thus it is suggested that, in order to avoid undesirable oscillation of the original energy in real applications,
the time step size should not be taken too large,
even though such oscillations do not necessarily make the calculation unstable.

\begin{figure*}[htbp]
\begin{minipage}[t]{0.49\linewidth}
\centerline{\includegraphics[scale=0.55]{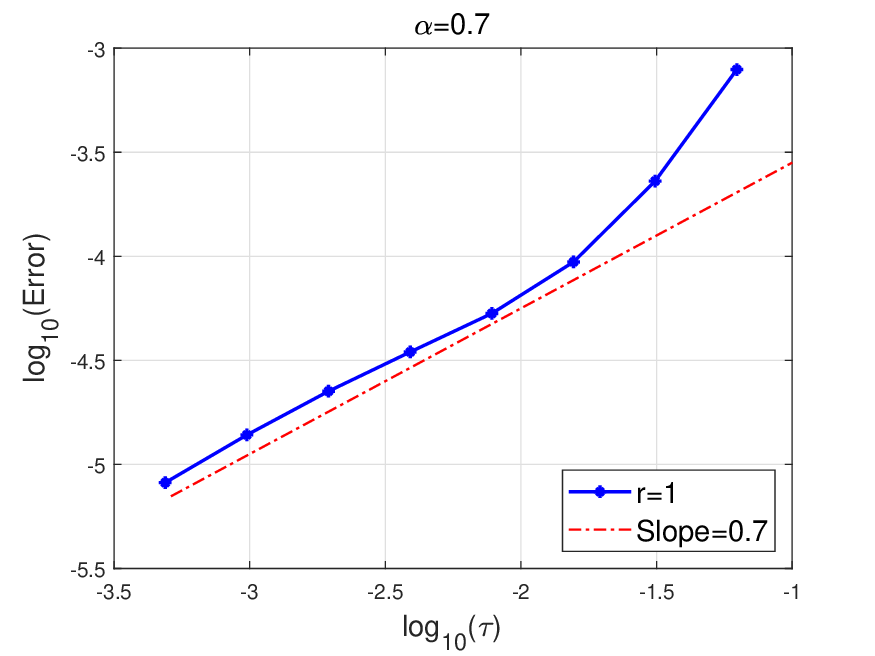}}
\centerline{(a) $r=1$}
\end{minipage}
\begin{minipage}[t]{0.49\linewidth}
\centerline{\includegraphics[scale=0.55]{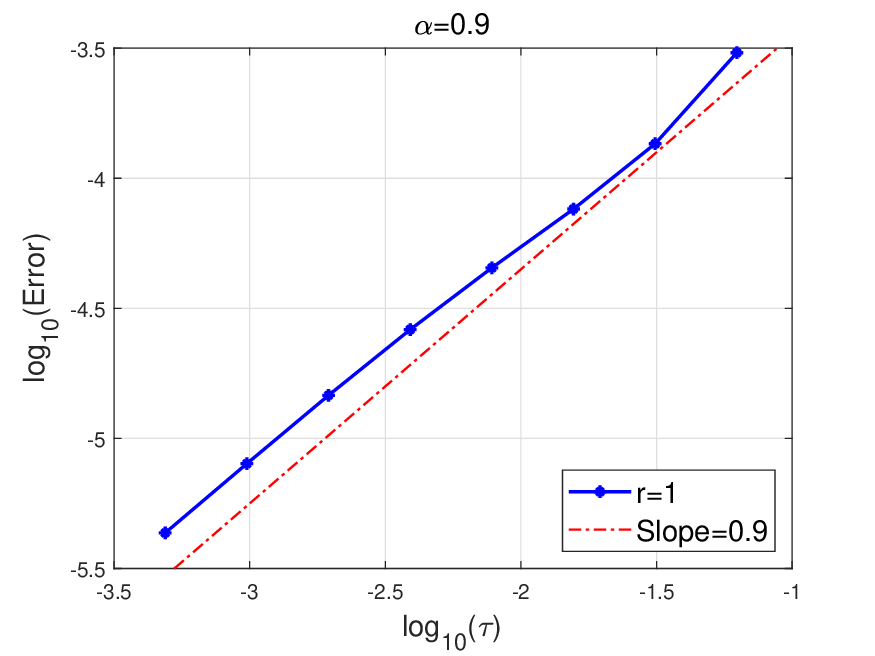}}
\centerline{(b) $r=1$}
\end{minipage}
\vskip 3mm
\begin{minipage}[t]{0.49\linewidth}
\centerline{\includegraphics[scale=0.55]{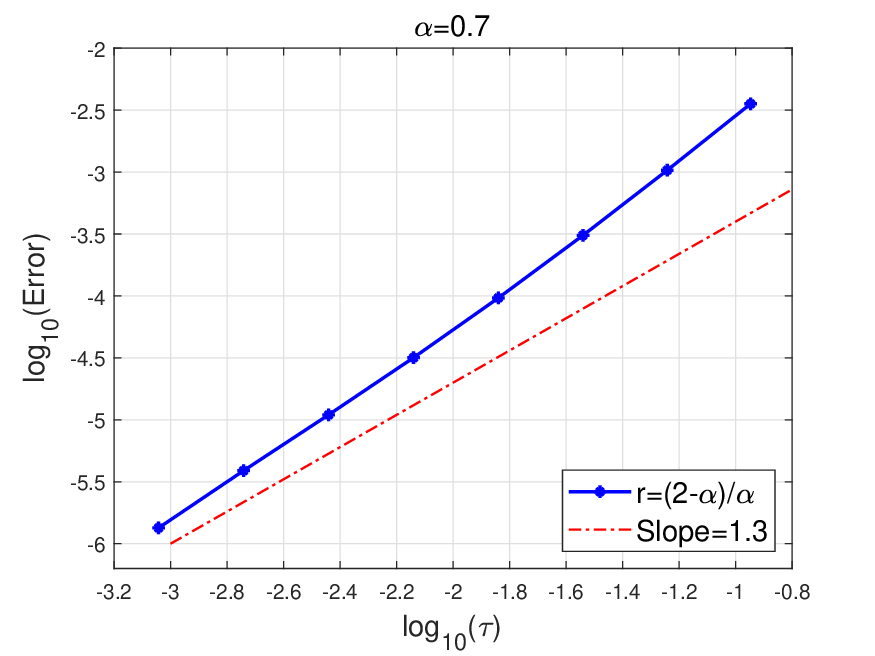}}
\centerline{(c) $r=\dfrac{2-\alpha}{\alpha}$}
\end{minipage}
\begin{minipage}[t]{0.49\linewidth}
\centerline{\includegraphics[scale=0.55]{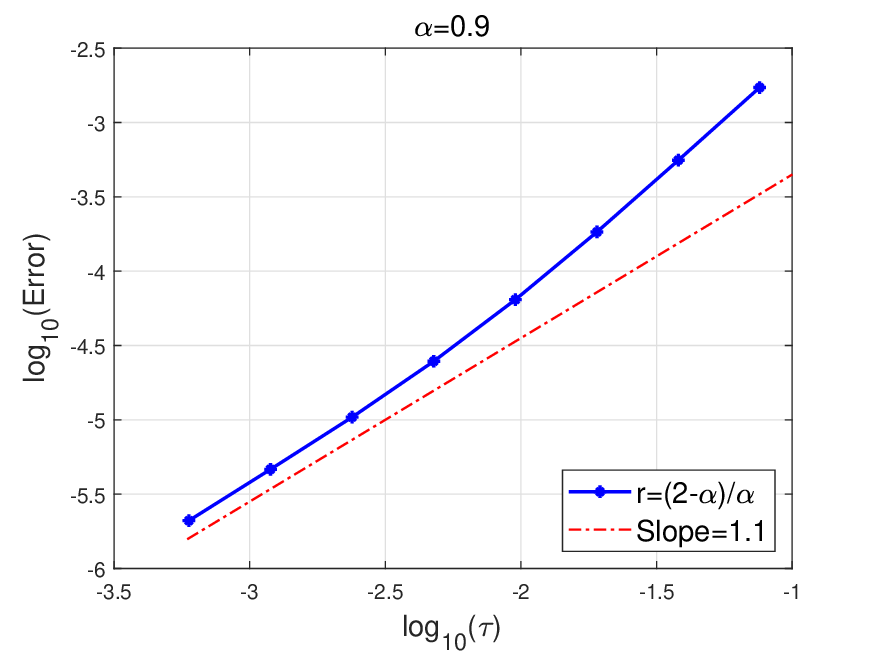}}
\centerline{(d) $r=\dfrac{2-\alpha}{\alpha}$}
\end{minipage}
\vskip 3mm
\begin{minipage}[t]{0.49\linewidth}
\centerline{\includegraphics[scale=0.55]{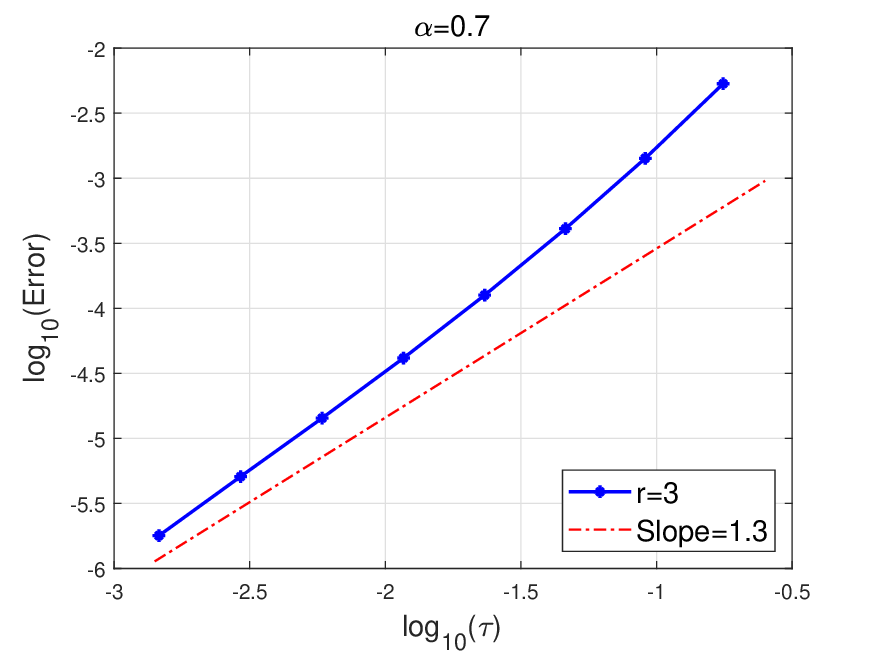}}
\centerline{(e) $r=3$}
\end{minipage}
\begin{minipage}[t]{0.49\linewidth}
\centerline{\includegraphics[scale=0.55]{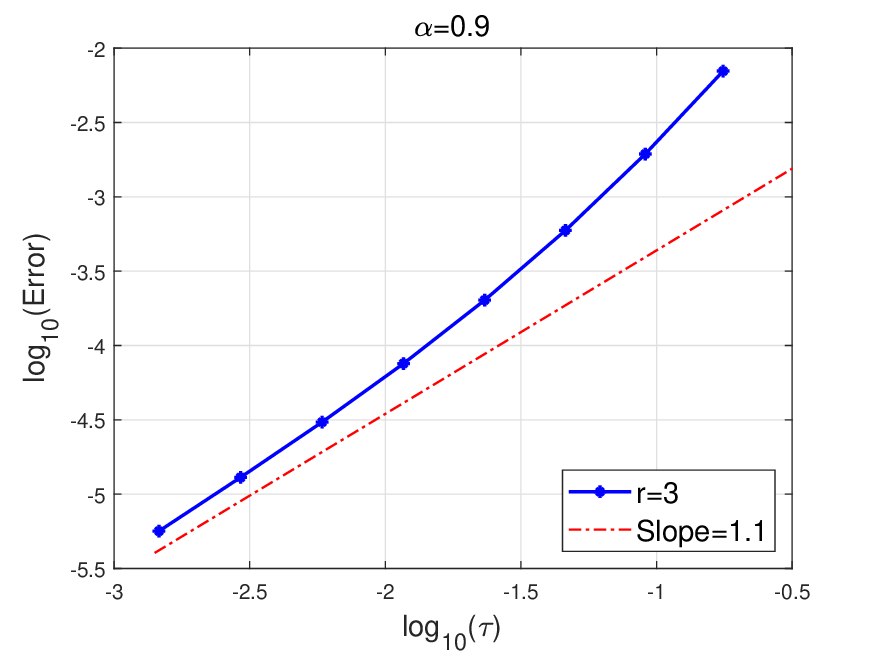}}
\centerline{(f) $r=3$}
\end{minipage}
\caption{(Example \ref{expl3}) Effect of the mesh parameter on the convergence rate of the L1-CN scheme:
$\alpha=0.7$ (left) and  $\alpha=0.9$ (right).
}\label{fig3}
\end{figure*}

\begin{figure*}[htbp]
\begin{minipage}[t]{0.49\linewidth}
\centerline{\includegraphics[scale=0.55]{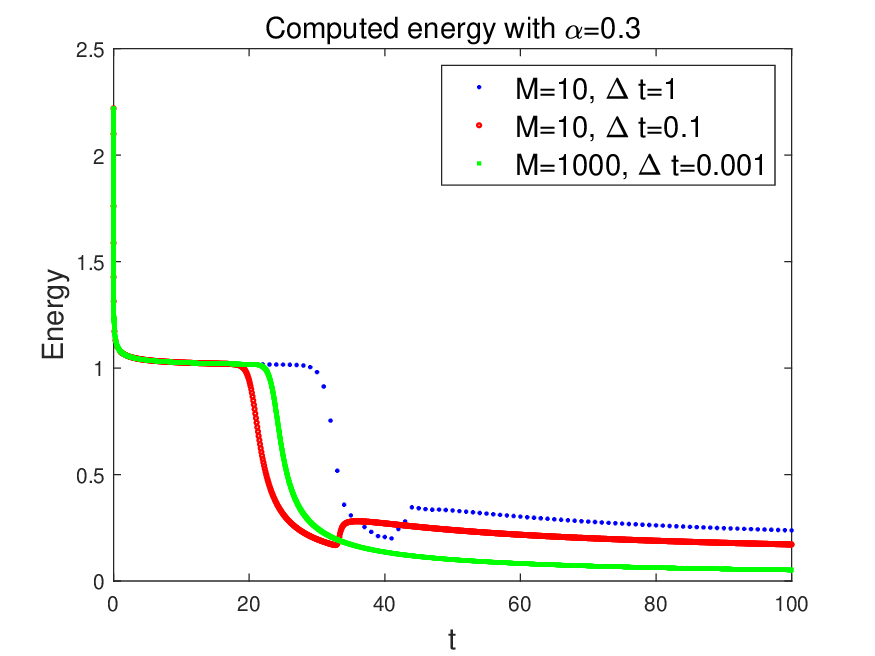}}
\centerline{(a) $\alpha=0.3$}
\end{minipage}
\begin{minipage}[t]{0.49\linewidth}
\centerline{\includegraphics[scale=0.55]{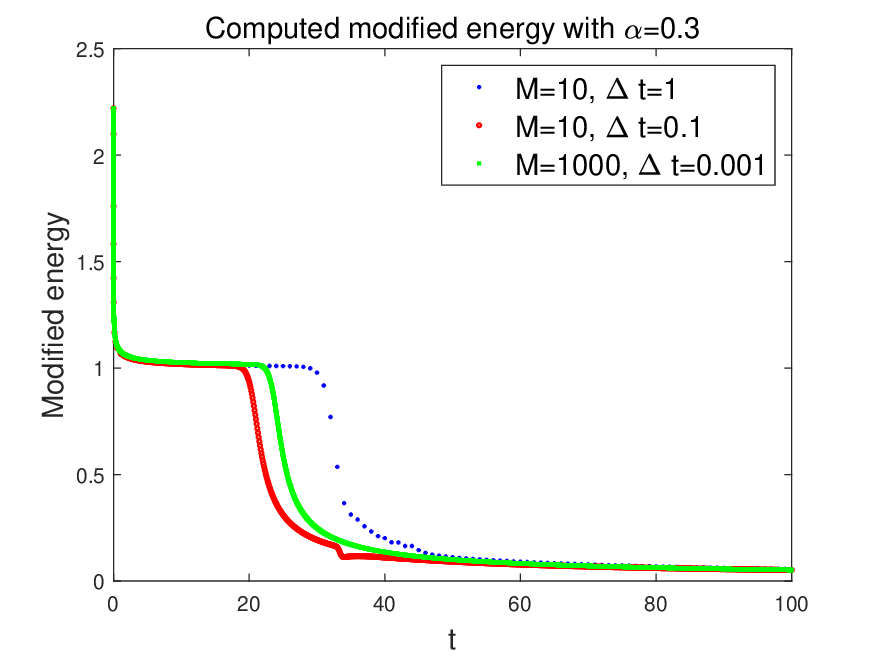}}
\centerline{(b) $\alpha=0.3$}
\end{minipage}
\vskip 3mm
\begin{minipage}[t]{0.49\linewidth}
\centerline{\includegraphics[scale=0.55]{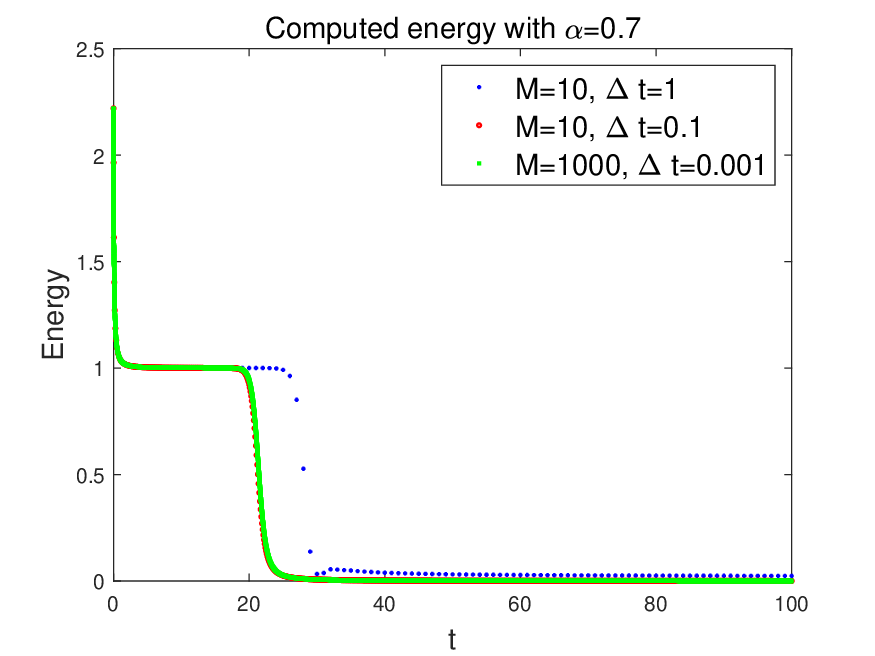}}
\centerline{(c) $\alpha=0.7$}
\end{minipage}
\begin{minipage}[t]{0.49\linewidth}
\centerline{\includegraphics[scale=0.55]{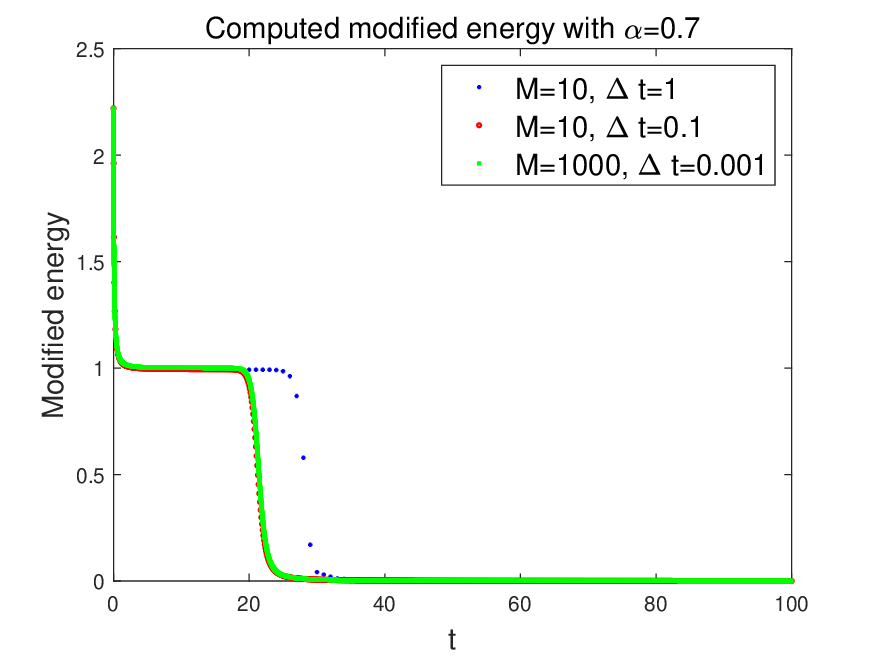}}
\centerline{(d) $\alpha=0.7$}
\end{minipage}
\vskip 3mm
\begin{minipage}[t]{0.49\linewidth}
\centerline{\includegraphics[scale=0.55]{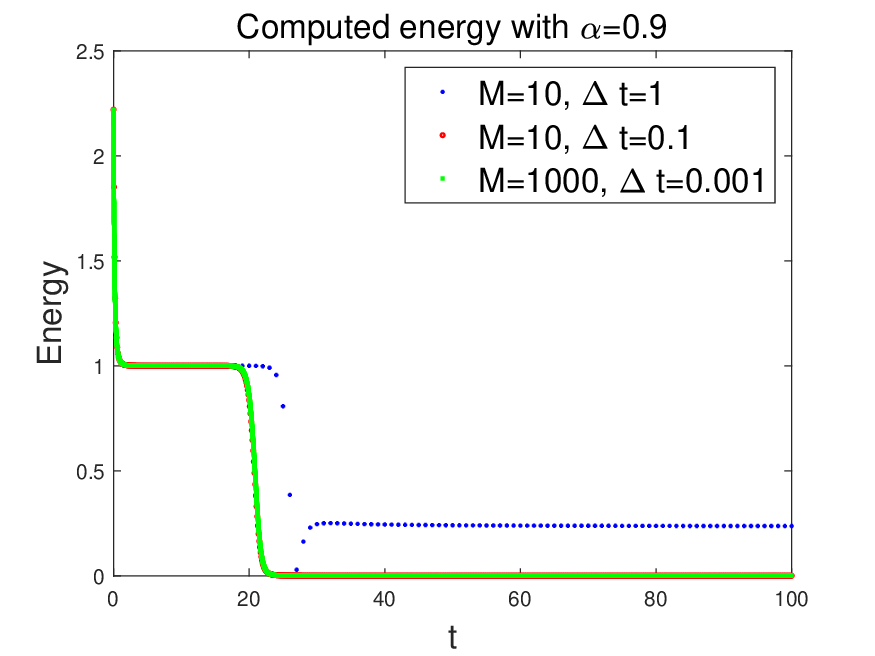}}
\centerline{(e) $\alpha=0.9$}
\end{minipage}
\begin{minipage}[t]{0.49\linewidth}
\centerline{\includegraphics[scale=0.55]{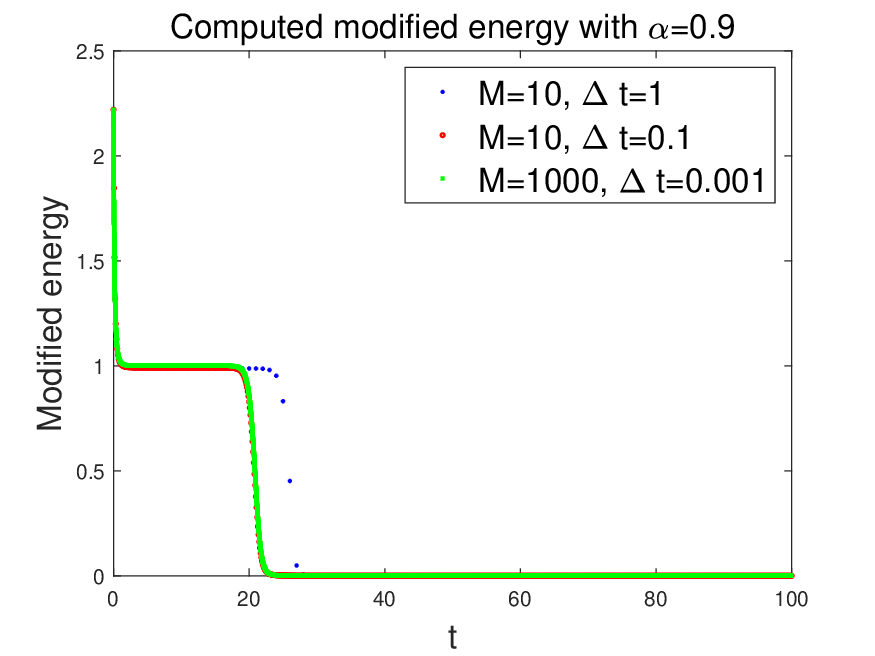}}
\centerline{(f) $\alpha=0.9$}
\end{minipage}
\caption{(Example \ref{expl3}) Dissipation feature of the computed energy $E(\phi^{n})$ and modified energy $\widetilde{E}^{n}_{\varepsilon,\theta}$
with different time step sizes using the L1-CN scheme.
}\label{fig4}
\end{figure*}

\subsection{Order sensibility of a benchmark problem}

Now we consider an interface moving problem governed by
the following fractional Allen-Cahn equation in the rectangular domain $\Omega:=(-32,32)^2$
\bq\label{bkt}
\begin{cases}
\begin{array}{r@{}l}
&\dps^{C}_{0}{}\!\!D^{\alpha}_{t}\phi- \Delta \phi-\phi(1-\phi^{2})=0,\\[9pt]
&\dps\frac{\partial\phi}{\partial \n}\big|_{\partial\Omega}=0, \phi(\x,0)=\phi_{0}(\x).
\end{array}
\end{cases}
\eq
The initial state of the interface is a circle of the radius $R_{0}$. In the classical case, i.e., $\alpha=1$,
this interface moving problem has been frequently served as benchmark problem to validate numerical methods.
In this case it has been known \cite{Chen98} that such a circle interface is unstable, and
will shrink and disappear due to the driving force. In fact,
it was shown in \cite{Yan16} that
the radius $R(t)$ of the circle at the given time $t$ evolves as
\bq\label{eqr}
R(t)=\sqrt{R_{0}^{2}-2t}.
\eq
The purpose of this example is twofold: demonstration of the efficiency of the proposed schemes, and sensibility investigation
of how the fractional order $\alpha$ effects the interface evolution.
To this end we consider the circle interface of the radius $R_{0}=8$. We first transform
the computational domain into the reference domain $(-1,1)^2$, then
the equation to be solved reads:
\be\label{ex4}
&\dps^{C}_{0}{}\!\!D^{\alpha}_{t}\phi-\varepsilon^{2}\Delta \phi-\phi(1-\phi^{2})=0,
\ee
where $\varepsilon=0.0313$. The initial condition becomes
\beq
\phi_{0}(\x)=\begin{cases}
\begin{array}{r@{}l}
1,&\quad |\x|^2<\big(\frac{8}{32}\big)^{2},\\[9pt]
-1,&\quad |\x|^2\geq\big(\frac{8}{32}\big)^{2}.
\end{array}
\end{cases}
\eeq
We numerically solve this problem by using
the L1-CN time scheme \eqref{L1_CN} based on the graded mesh with $r=\frac{2-\alpha}{\alpha}$ in the
subinterval $[0,1]$ and the uniform mesh with the time step size $\Delta t$ in the subinterval $(1,T]$.
The Legendre spectral method for the spatial discretization make use of polynomials of degree $128$ in each direction.
In Figure \ref{fig5}
we investigate the impact of the fractional order $\alpha$ on the interface evolution.
First, the result for $\alpha=1$ indicates that the circle interface almost disappears at $T=32$,
which is in a perfect
agreement with the prediction \eqref{eqr}. This partially demonstrates the accuracy of the scheme.
The second observation is that
the interface shirking dynamics exists for all $\alpha$,
but the shirking speed slows down significantly
when the fractional order decreases. This is not really surprising since the fractional derivative represents
some kind of memory effect.
More details about the evolution of the circle radius are presented in Figure \ref{fig6}(a), which confirms the result
reported in \cite{DYZ19}. That is,
the circle shrinking rate seems to exhibit a power-law scaling, although the exact rate remains unknown.
Also shown in Figure \ref{fig6}(b) is the evolution of the total free energy, from which
we see that the free energy is always dissipative for all tested $\alpha$.
This further increases the credibility of the simulation results.

\begin{figure*}[htbp]
\begin{minipage}[t]{0.19\linewidth}
\centerline{\includegraphics[width=4cm,height=3.5cm]{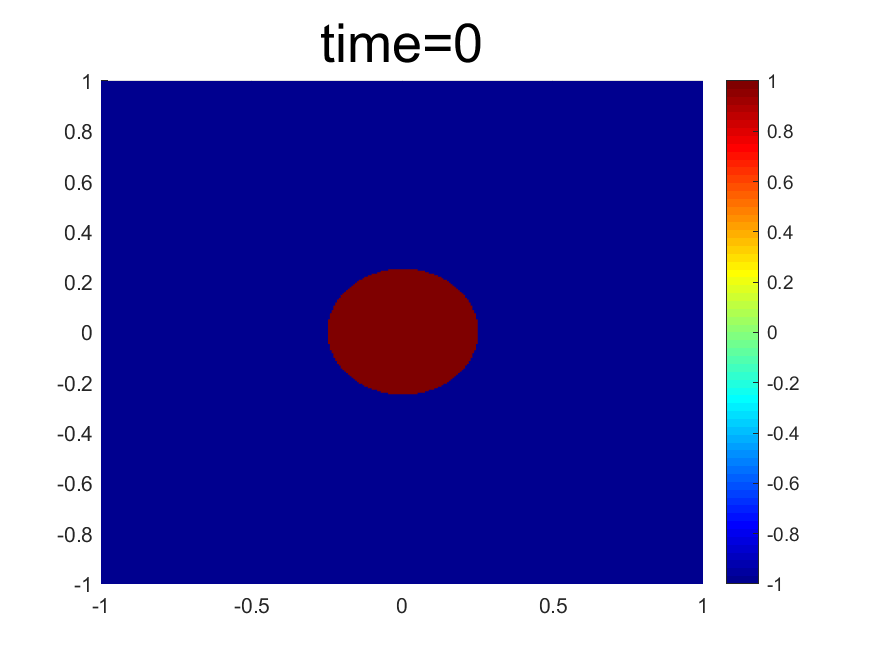}}
\centerline{}
\end{minipage}
\begin{minipage}[t]{0.19\linewidth}
\centerline{\includegraphics[width=4cm,height=3.5cm]{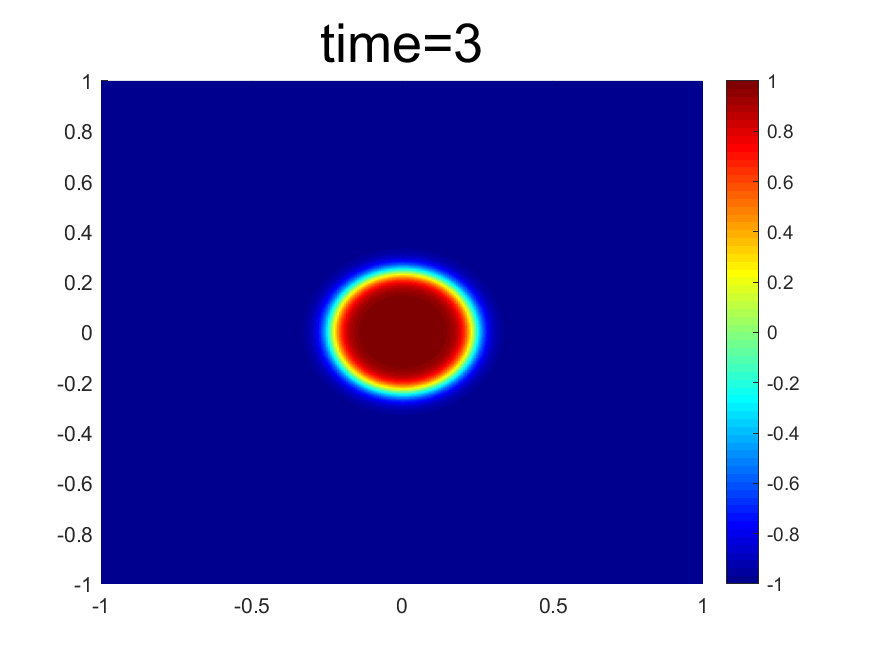}}
\centerline{}
\end{minipage}
\begin{minipage}[t]{0.19\linewidth}
\centerline{\includegraphics[width=4cm,height=3.5cm]{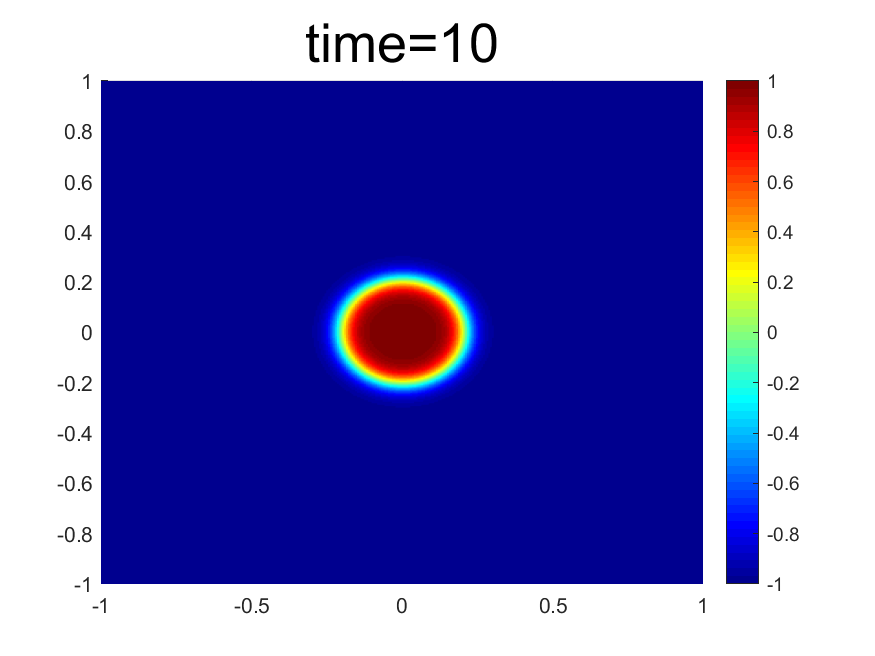}}
\centerline{ (a) $\alpha=1$ and $\Delta t=0.01$}
\end{minipage}
\begin{minipage}[t]{0.19\linewidth}
\centerline{\includegraphics[width=4cm,height=3.5cm]{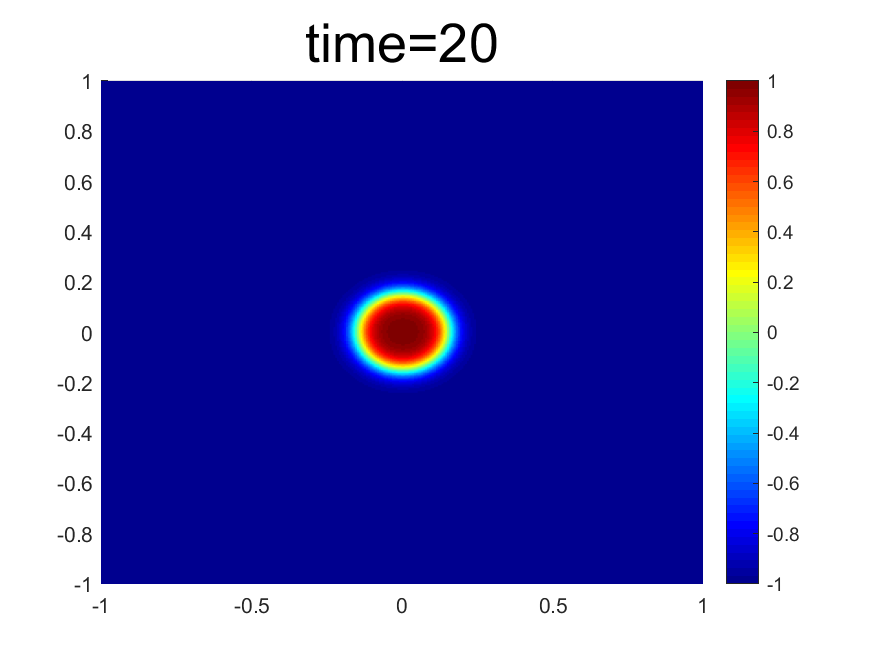}}
\centerline{}
\end{minipage}
\begin{minipage}[t]{0.19\linewidth}
\centerline{\includegraphics[width=4cm,height=3.5cm]{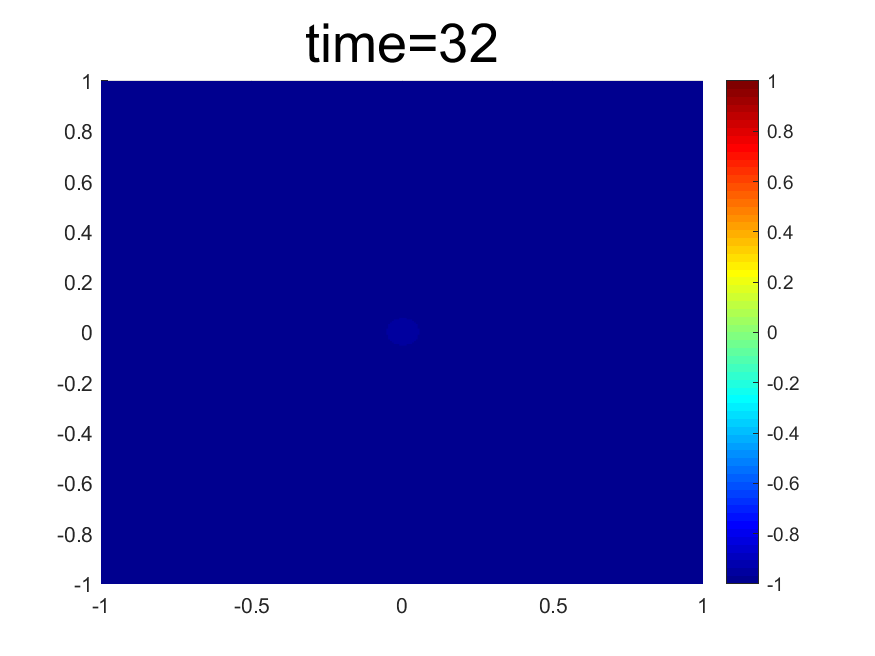}}
\centerline{}
\end{minipage}
\vskip 3mm
\begin{minipage}[t]{0.19\linewidth}
\centerline{\includegraphics[width=4cm,height=3.5cm]{FAC4_1.eps}}
\centerline{}
\end{minipage}
\begin{minipage}[t]{0.19\linewidth}
\centerline{\includegraphics[width=4cm,height=3.5cm]{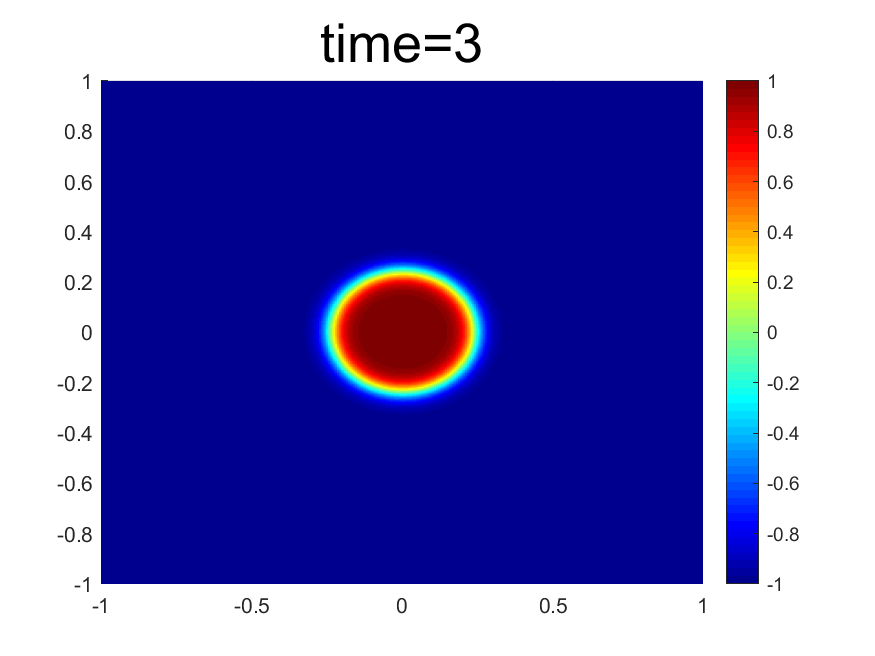}}
\centerline{}
\end{minipage}
\begin{minipage}[t]{0.19\linewidth}
\centerline{\includegraphics[width=4cm,height=3.5cm]{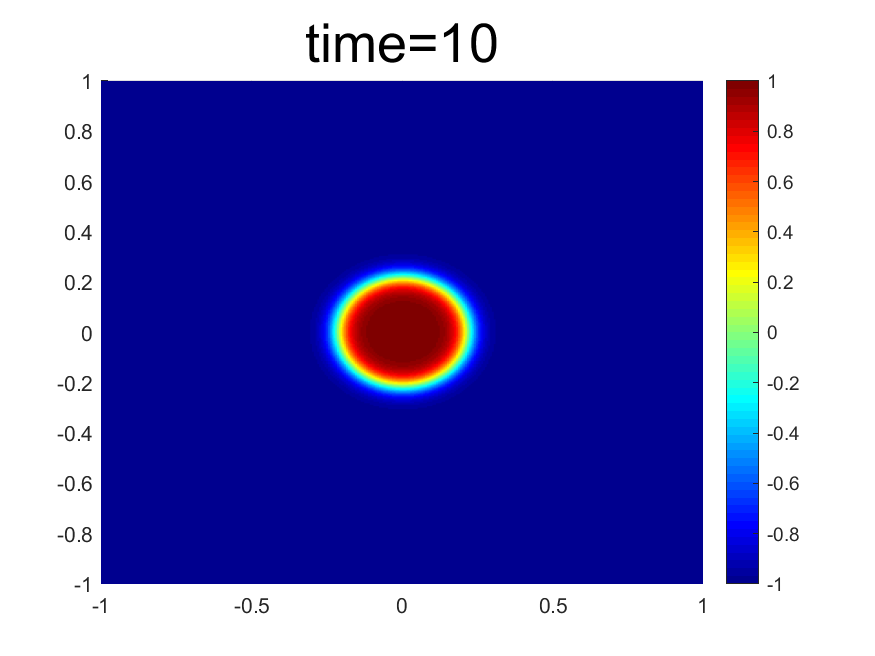}}
\centerline{ (b) $\alpha=0.9, M=100$, and $\Delta t=0.01$}
\end{minipage}
\begin{minipage}[t]{0.19\linewidth}
\centerline{\includegraphics[width=4cm,height=3.5cm]{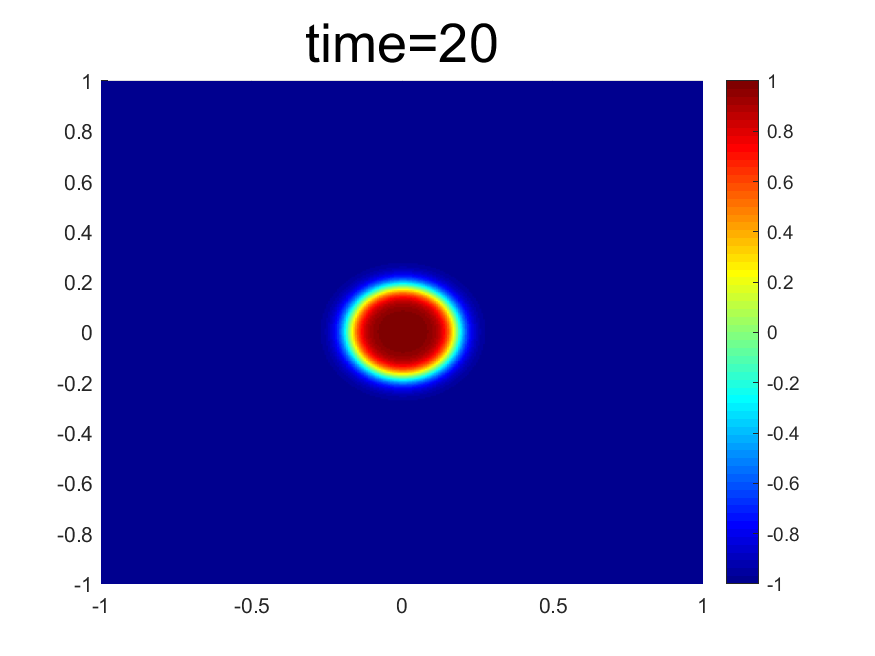}}
\centerline{}
\end{minipage}
\begin{minipage}[t]{0.19\linewidth}
\centerline{\includegraphics[width=4cm,height=3.5cm]{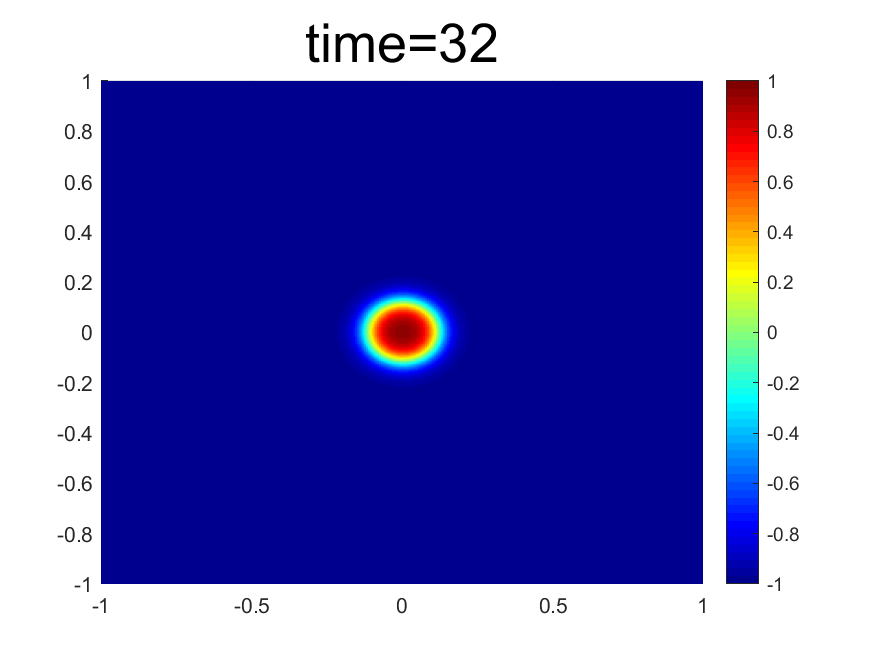}}
\centerline{}
\end{minipage}
\vskip 3mm
\begin{minipage}[t]{0.19\linewidth}
\centerline{\includegraphics[width=4cm,height=3.5cm]{FAC4_1.eps}}
\centerline{}
\end{minipage}
\begin{minipage}[t]{0.19\linewidth}
\centerline{\includegraphics[width=4cm,height=3.5cm]{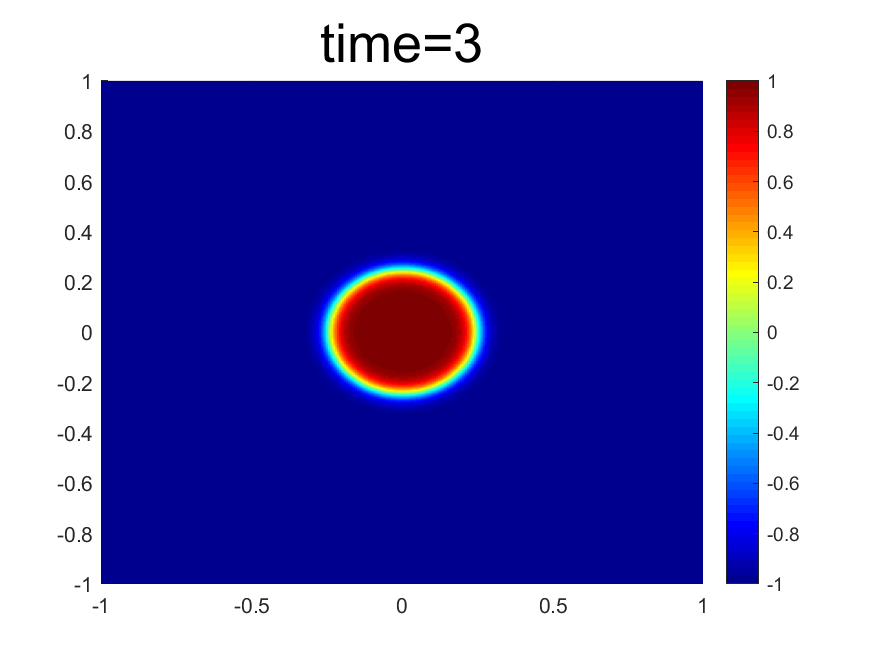}}
\centerline{}
\end{minipage}
\begin{minipage}[t]{0.19\linewidth}
\centerline{\includegraphics[width=4cm,height=3.5cm]{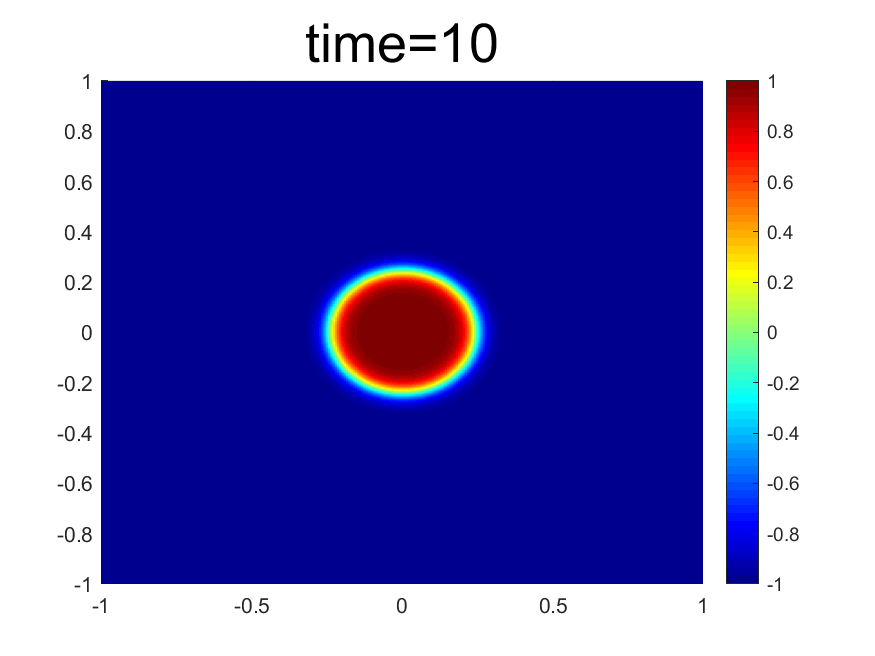}}
\centerline{ (c) $\alpha=0.4, M=1000$, and $\Delta t=0.01$}
\end{minipage}
\begin{minipage}[t]{0.19\linewidth}
\centerline{\includegraphics[width=4cm,height=3.5cm]{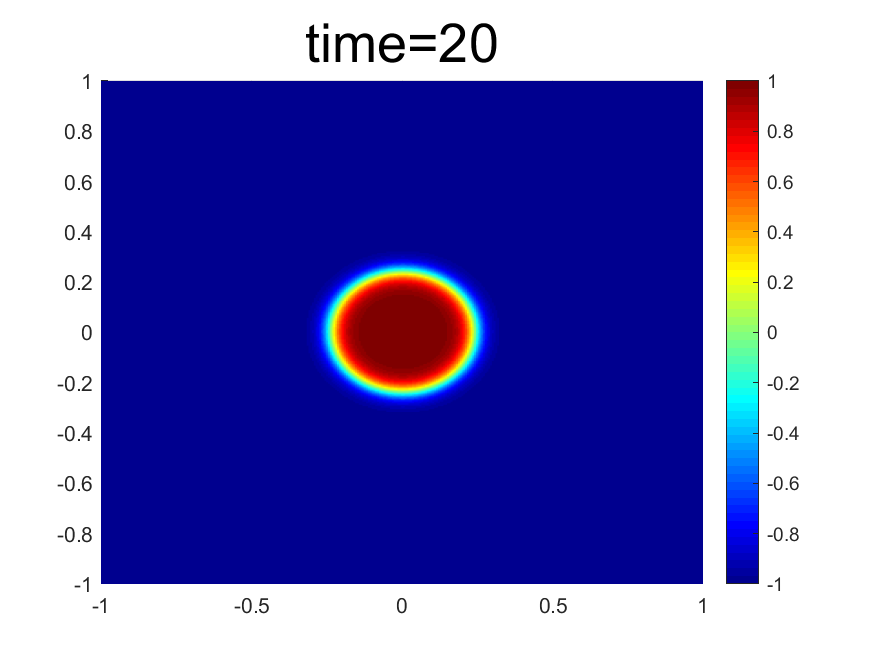}}
\centerline{}
\end{minipage}
\begin{minipage}[t]{0.19\linewidth}
\centerline{\includegraphics[width=4cm,height=3.5cm]{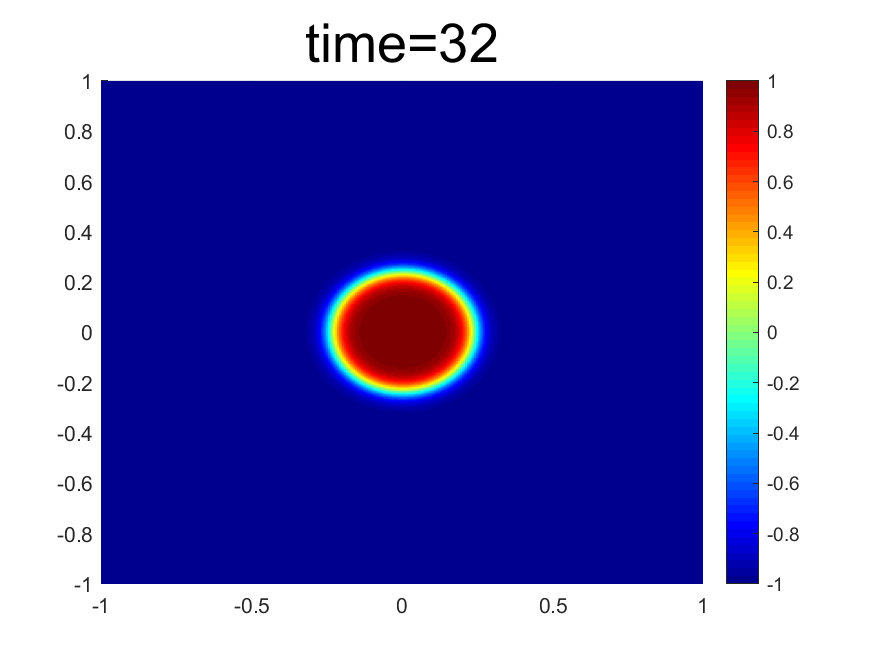}}
\centerline{}
\end{minipage}
\caption{Snapshots of the interface evolution simulated by using the L1-CN scheme for $\alpha=1, 0.9$, and $0.4$.
}\label{fig5}
\end{figure*}

\begin{figure*}[htbp]
\begin{minipage}[t]{0.49\linewidth}
\centerline{\includegraphics[scale=0.55]{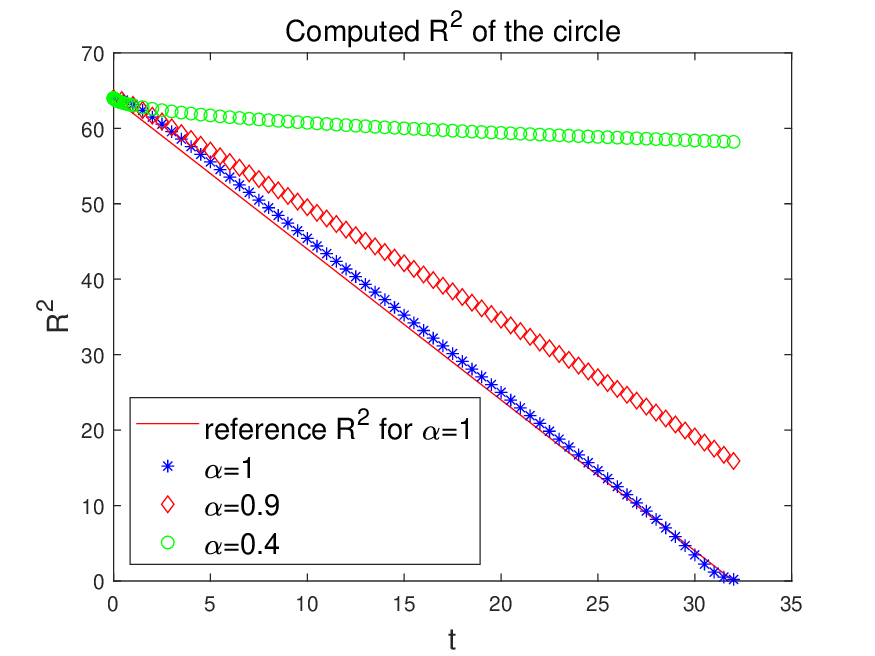}}
\centerline{(a) Decay of the interface radius}
\end{minipage}
\begin{minipage}[t]{0.49\linewidth}
\centerline{\includegraphics[scale=0.55]{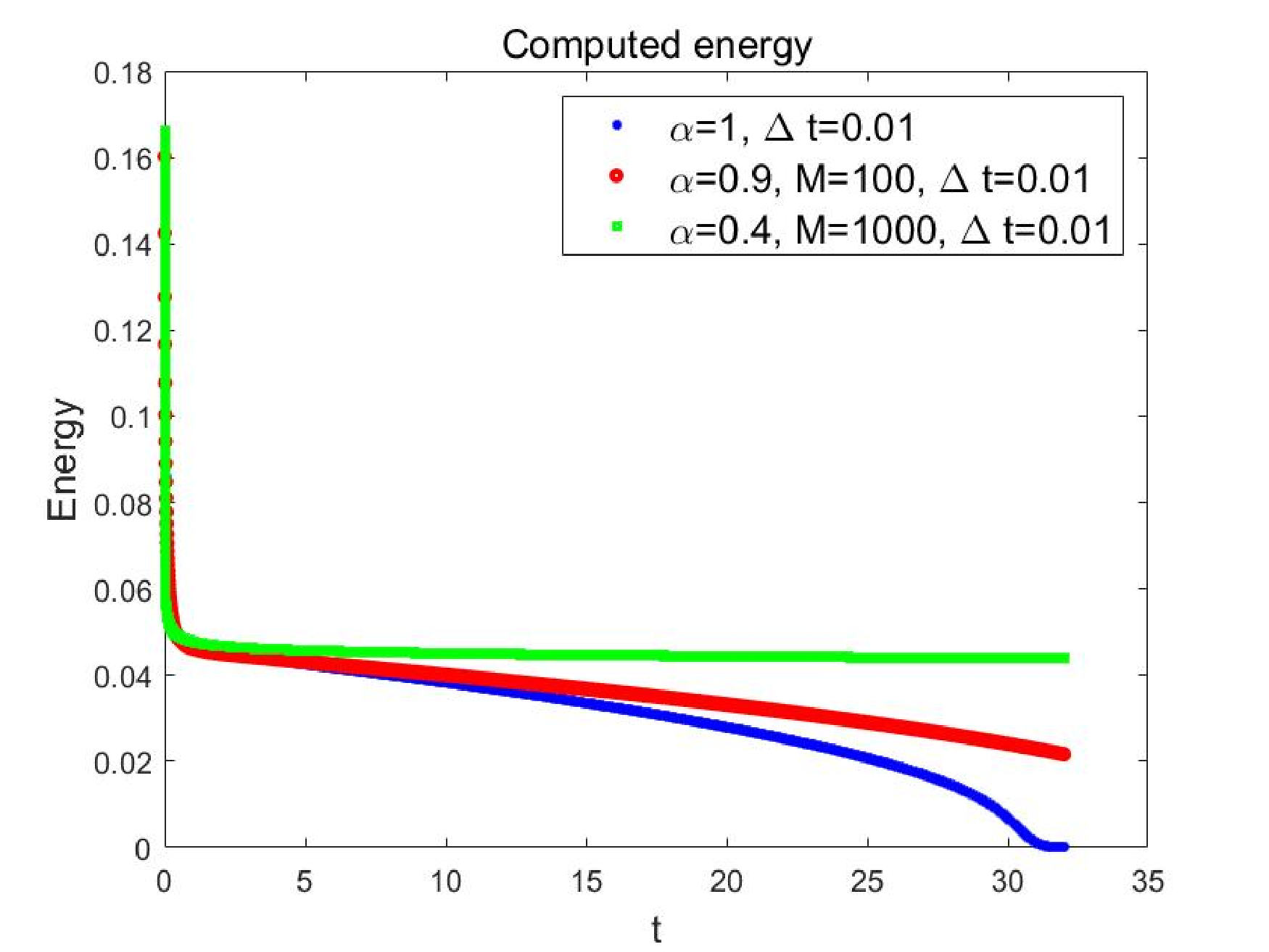}}
\centerline{(b) Total free energy}
\end{minipage}
\caption{The evolution of computed $R^{2}$ and total free energy for time fractional Allen-Cahn equation \eqref{ex4} with $\alpha=1, 0.9$ and $0.4$.
}\label{fig6}
\end{figure*}

\subsection{Coarsening dynamics}

Finally we consider the application of the proposed schemes in investigating coarsening dynamics
of two-phase problems,
governed by the time fractional Allen-Cahn equation \eqref{prob}, and
starting with a random initial data ranging
from $-0.05$ to $0.05$.

We set the computational domain $\Omega$ to be $(-1,1)^{2}$, subject to the Neumann boundary condition, $\varepsilon^{2}=0.001$.
The simulation is performed by using L1-CN scheme \eqref{L1_CN} for the time discretization in the graded mesh with
$r=\frac{2-\alpha}{\alpha}$ in the first subinterval $[0,1]$ and the uniform mesh with $\Delta t=0.01$ in the second subinterval $(1,T]$.
The spatial discretization uses the Legendre spectral method with $128\times128$ basis functions.
In Figure \ref{fig7} we compare the simulation results for different time fractional order $\alpha$. As we can observe from this figure,
there is no distinguishable difference at early time during the phase separation period (say, before $t=5$) for all tested $\alpha$.
After that the solutions apparently start to deviate, the dynamics described by the fractional Allen-Cahn equation
enters into long time scale phase coarsening.
This is further verified by the energy
evolution in Figure \ref{fig8}. Numerically, we can observe that the equations with different fractional orders have similar
dynamics for the phase separation but the coarsening dynamics becomes slower for smaller fractional orders.
\begin{figure*}[htbp]
  \begin{center}
    \begin{tabular}{C{0.9cm}cccc}
    &$\alpha=1$&$\alpha=0.9$&$\alpha=0.7$&$\alpha=0.5$\\
\tabincell{c}{$t=0$\\ \\ \\ \\ \\ \\ \\}
&\begin{minipage}[t]{0.2\linewidth}
\includegraphics[scale=0.22]{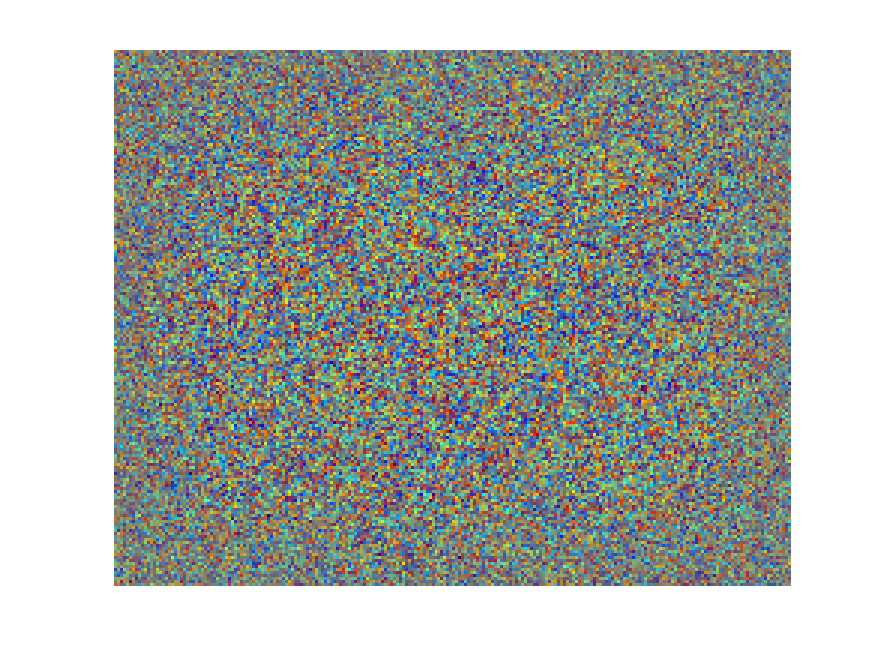}
\end{minipage}&
\begin{minipage}[t]{0.2\linewidth}
\includegraphics[scale=0.22]{FAC5_1.eps}
\end{minipage}&
\begin{minipage}[t]{0.2\linewidth}
\includegraphics[scale=0.22]{FAC5_1.eps}
\end{minipage}
&\begin{minipage}[t]{0.2\linewidth}
\includegraphics[scale=0.22]{FAC5_1.eps}
\end{minipage}\\
\specialrule{0em}{-15mm}{.001pt}
\tabincell{c}{$t=2$\\ \\ \\ \\ \\ \\ \\}
&\begin{minipage}[t]{0.2\linewidth}
\includegraphics[scale=0.22]{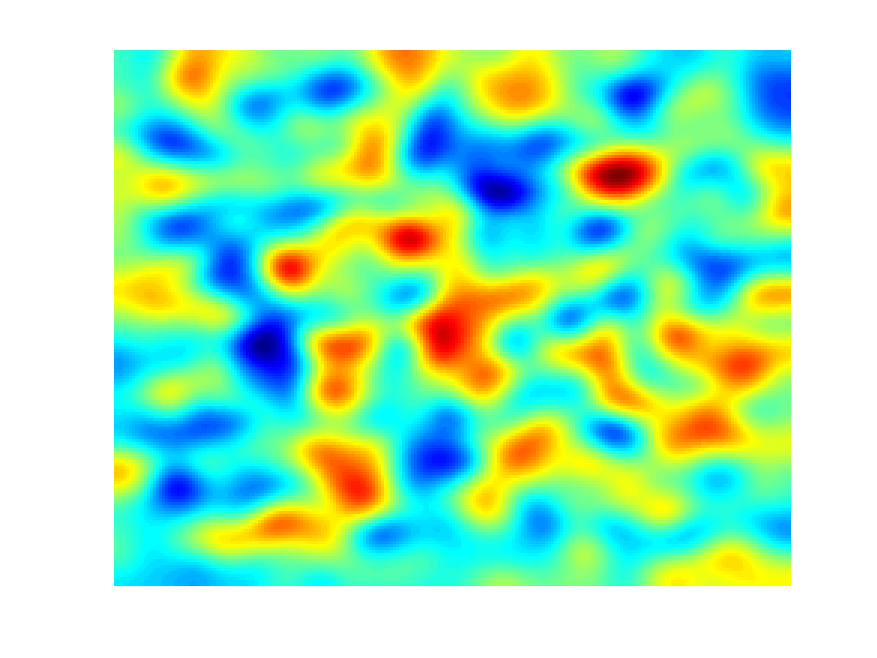}
\end{minipage}&
\begin{minipage}[t]{0.2\linewidth}
\includegraphics[scale=0.22]{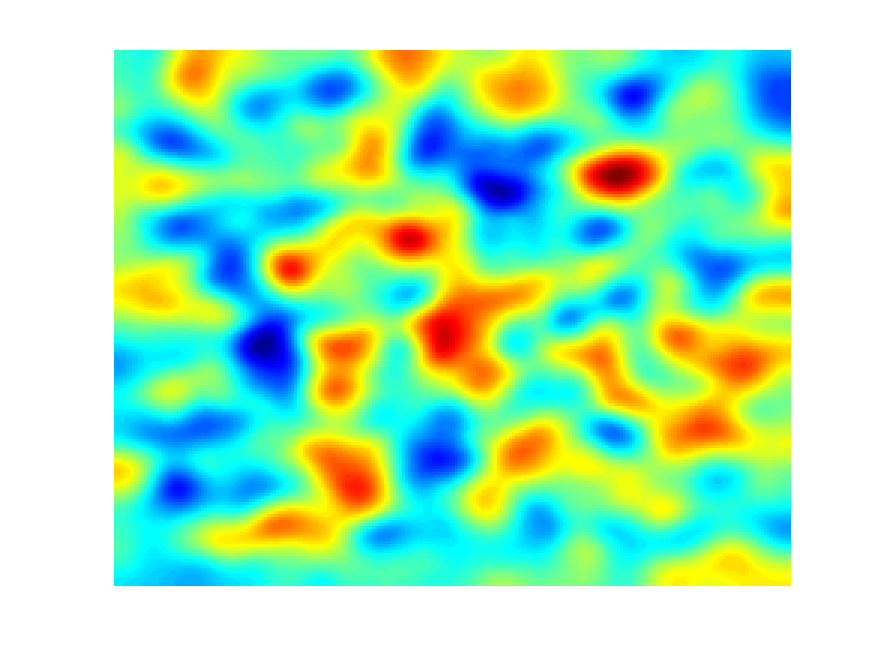}
\end{minipage}&
\begin{minipage}[t]{0.2\linewidth}
\includegraphics[scale=0.22]{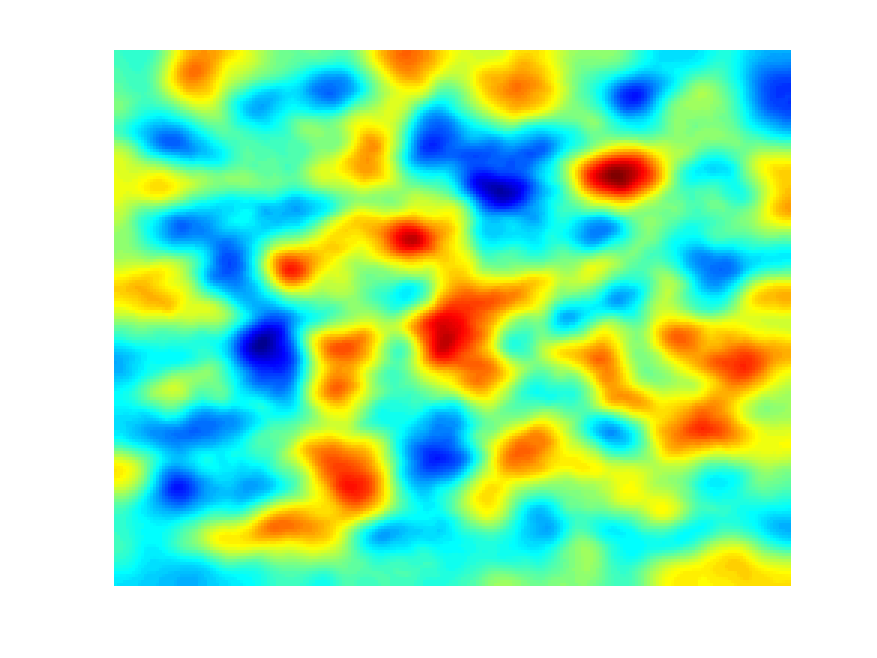}
\end{minipage}
&\begin{minipage}[t]{0.2\linewidth}
\includegraphics[scale=0.22]{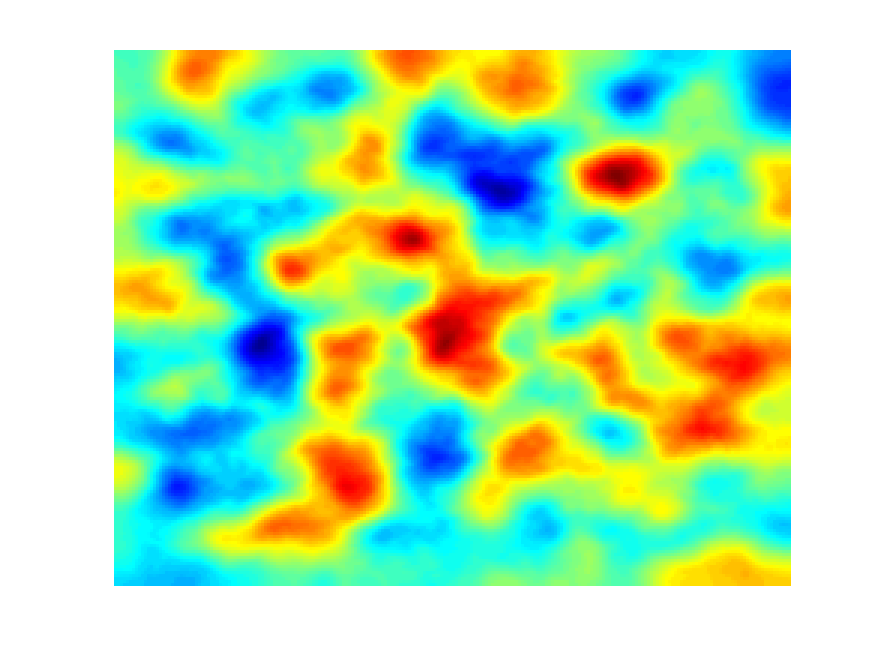}
\end{minipage}\\
\specialrule{0em}{-13mm}{.001pt}
\tabincell{c}{$t=5$\\ \\ \\ \\ \\ \\ \\}
&\begin{minipage}[t]{0.2\linewidth}
\includegraphics[scale=0.22]{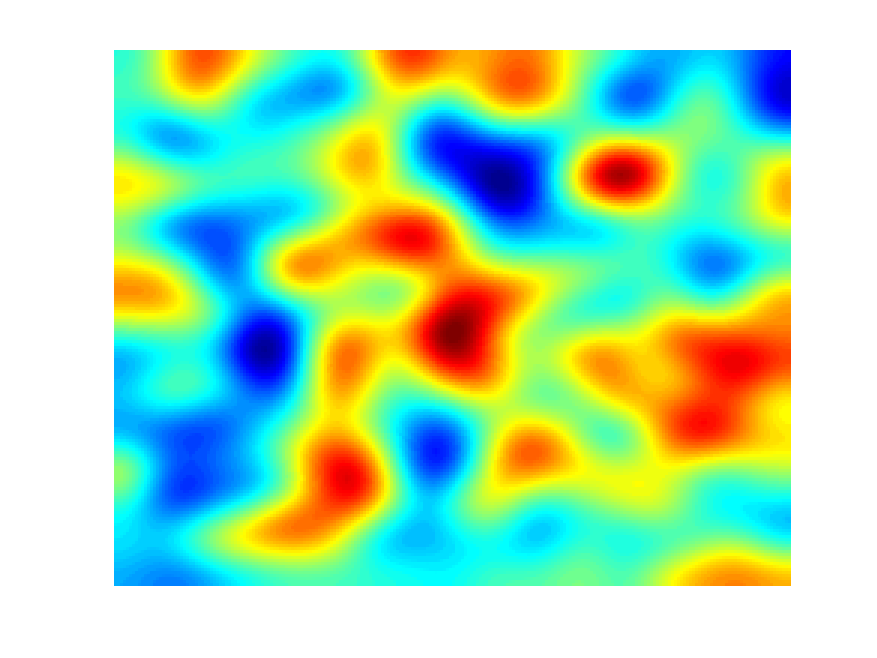}
\end{minipage}&
\begin{minipage}[t]{0.2\linewidth}
\includegraphics[scale=0.22]{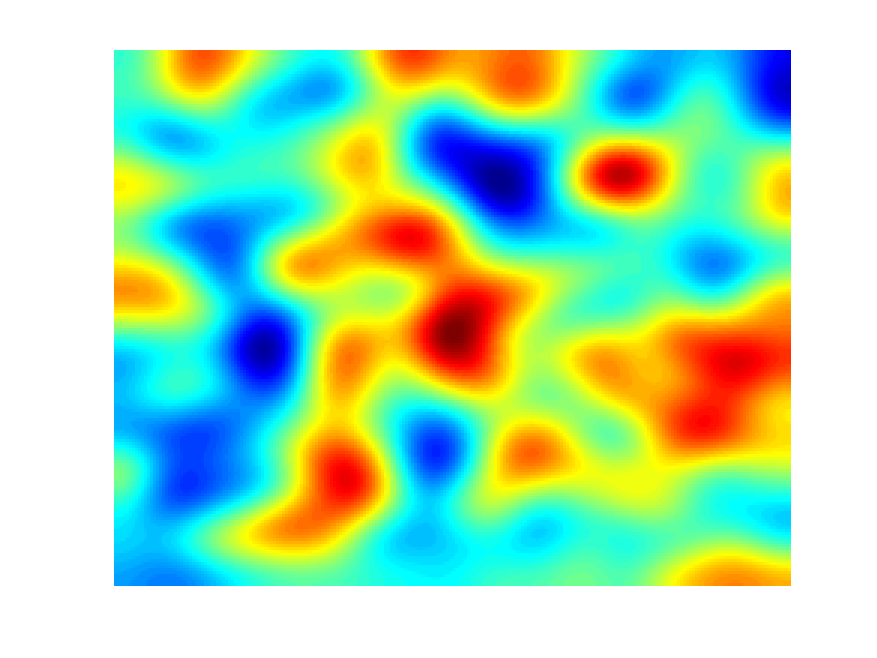}
\end{minipage}&
\begin{minipage}[t]{0.2\linewidth}
\includegraphics[scale=0.22]{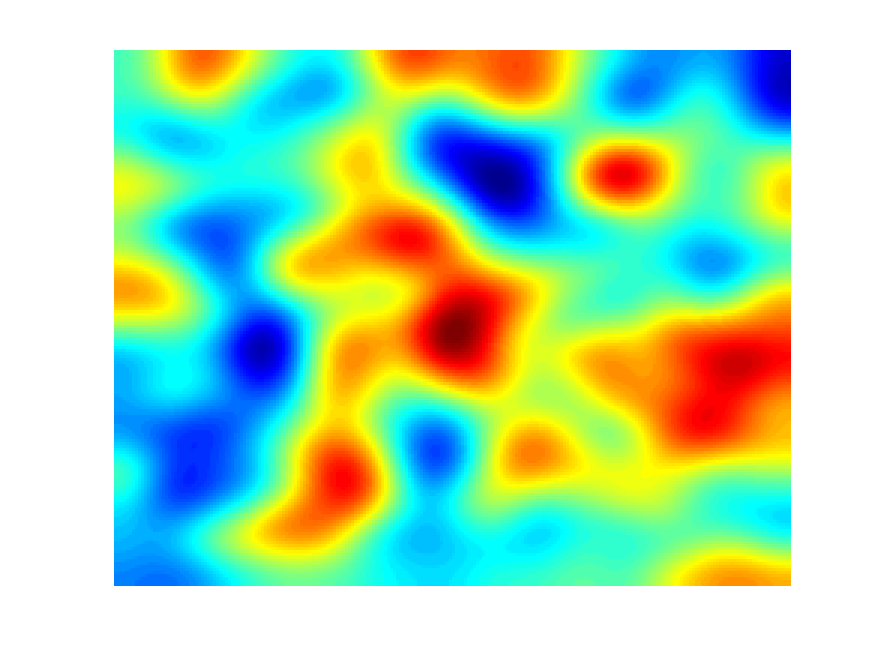}
\end{minipage}
&\begin{minipage}[t]{0.2\linewidth}
\includegraphics[scale=0.22]{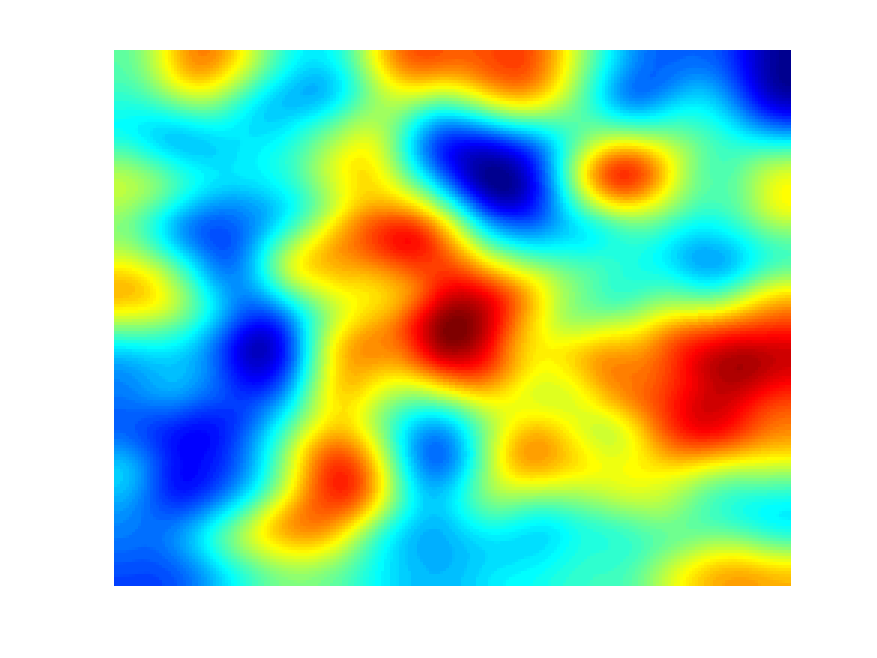}
\end{minipage}\\
\specialrule{0em}{-15mm}{.001pt}
\tabincell{c}{$t=20$\\ \\ \\ \\ \\ \\ \\}
&\begin{minipage}[t]{0.2\linewidth}
\includegraphics[scale=0.22]{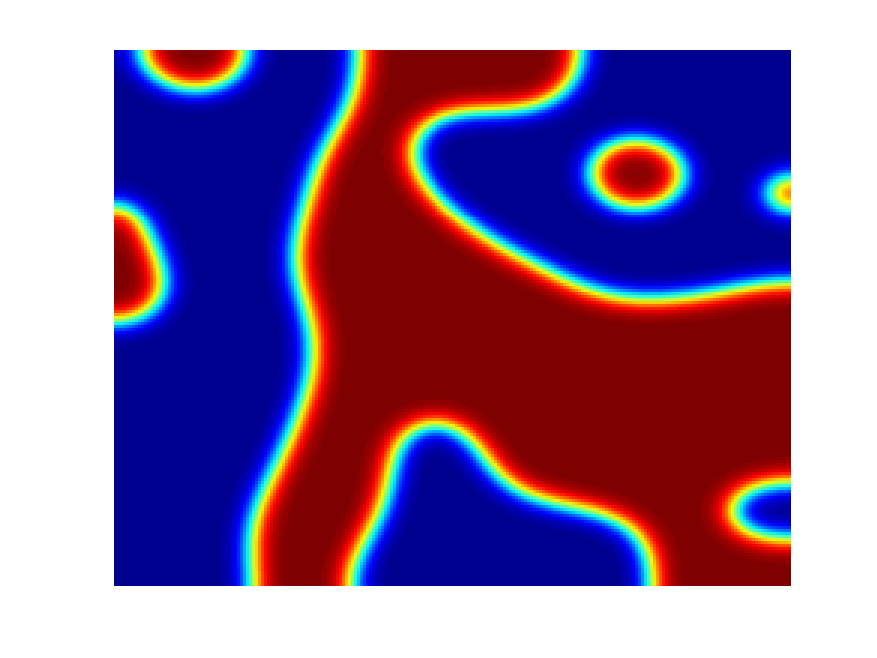}
\end{minipage}&
\begin{minipage}[t]{0.2\linewidth}
\includegraphics[scale=0.22]{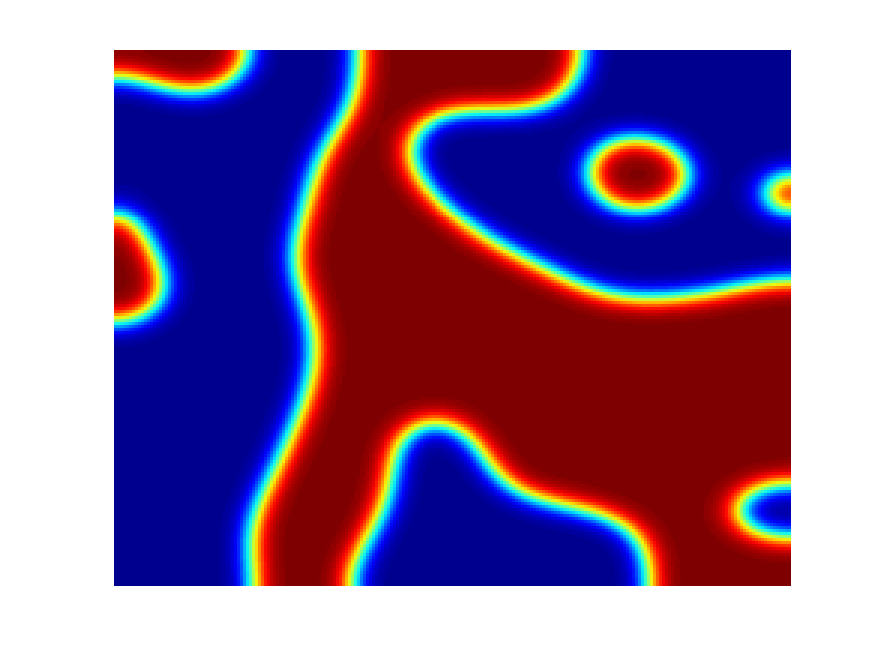}
\end{minipage}&
\begin{minipage}[t]{0.2\linewidth}
\includegraphics[scale=0.22]{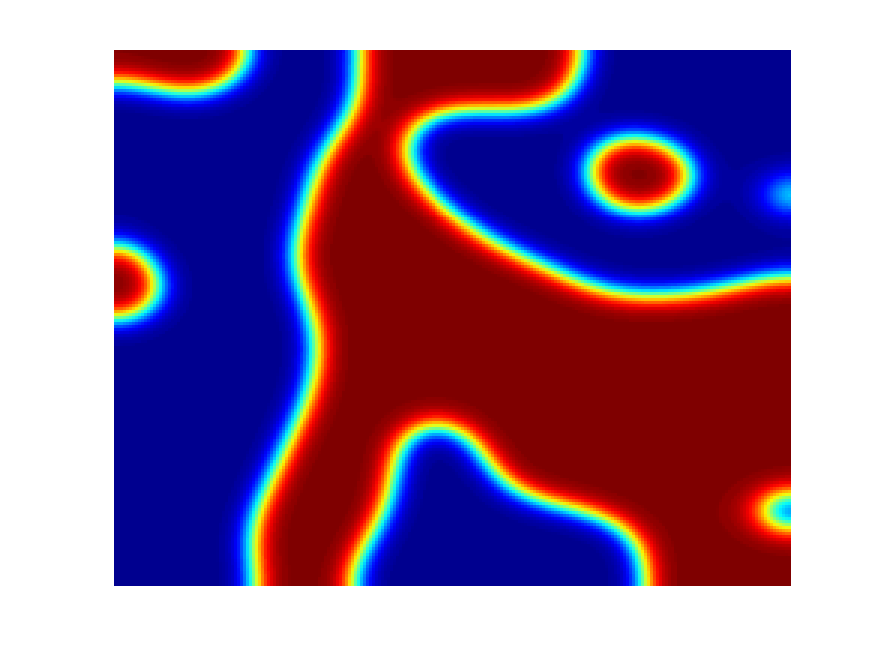}
\end{minipage}
&\begin{minipage}[t]{0.2\linewidth}
\includegraphics[scale=0.22]{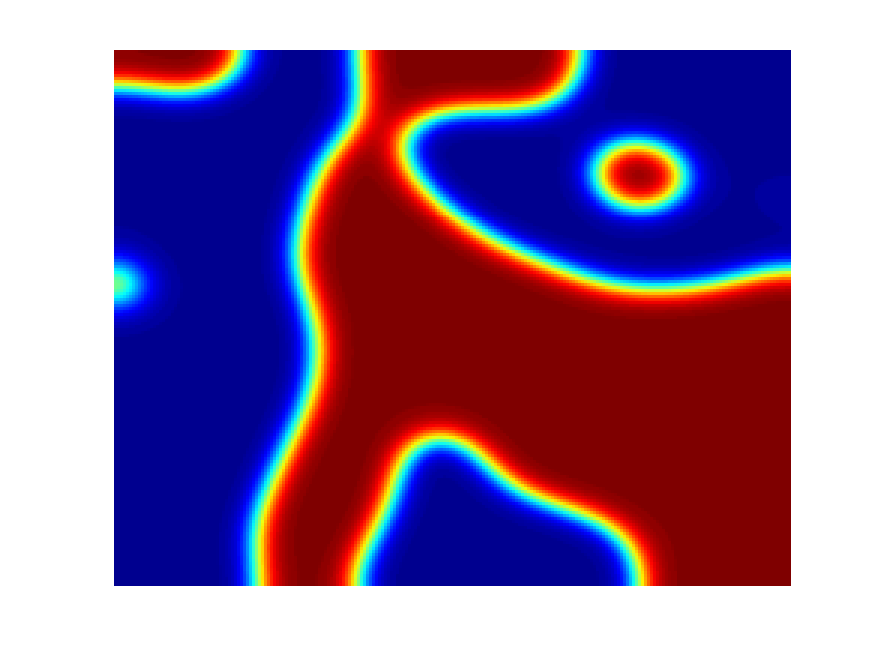}
\end{minipage}\\
\specialrule{0em}{-15mm}{.001pt}
\tabincell{c}{$t=50$\\ \\ \\ \\ \\ \\ \\}
&\begin{minipage}[t]{0.2\linewidth}
\includegraphics[scale=0.22]{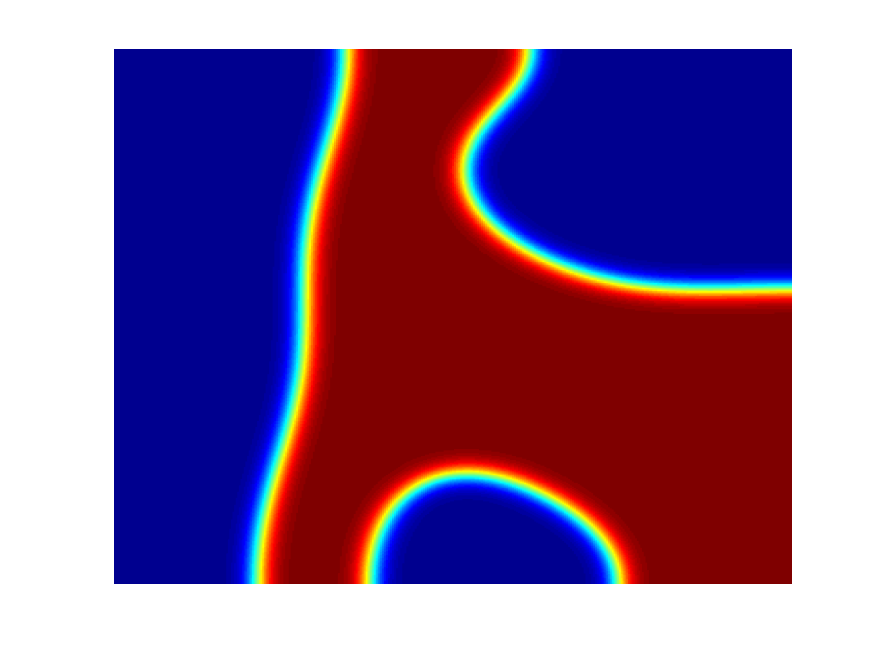}
\end{minipage}&
\begin{minipage}[t]{0.2\linewidth}
\includegraphics[scale=0.22]{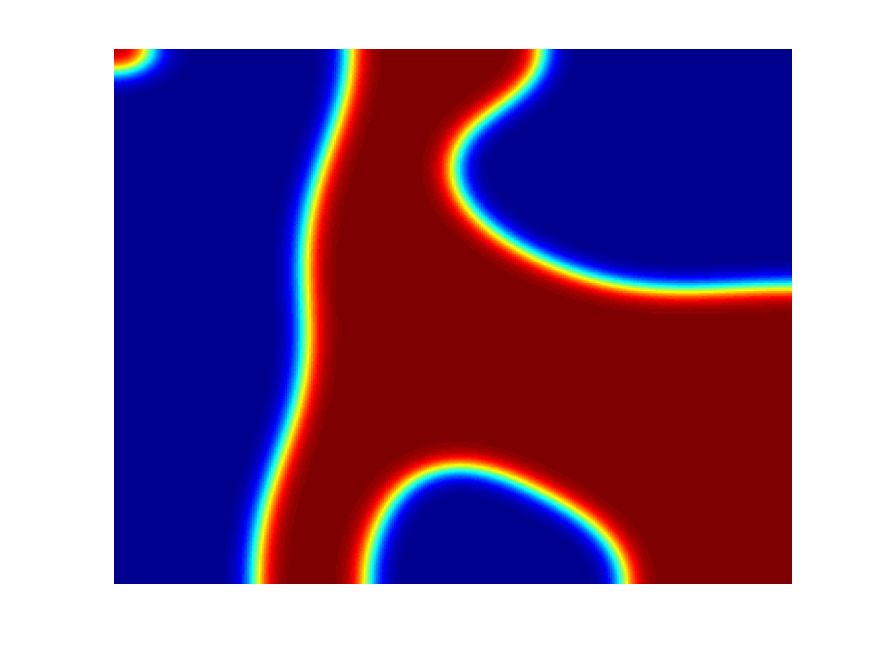}
\end{minipage}&
\begin{minipage}[t]{0.2\linewidth}
\includegraphics[scale=0.22]{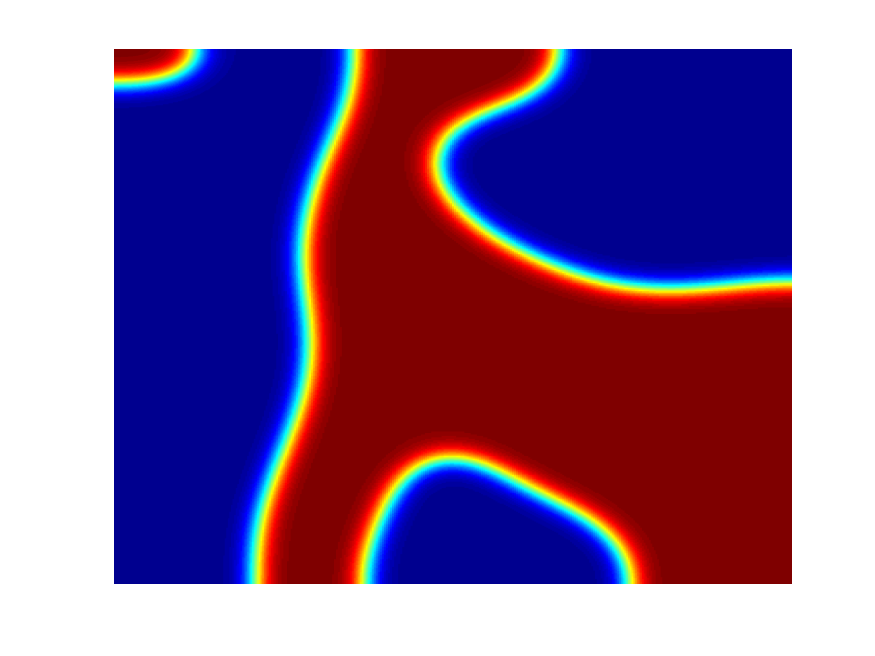}
\end{minipage}
&\begin{minipage}[t]{0.2\linewidth}
\includegraphics[scale=0.22]{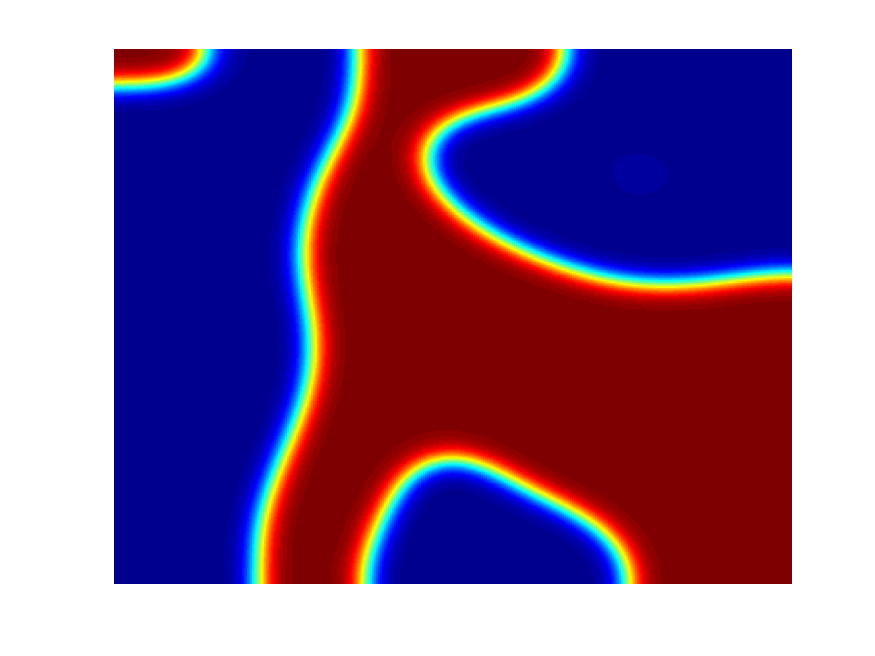}
\end{minipage}\\
\specialrule{0em}{-13mm}{.001pt}
\tabincell{c}{$t=80$\\ \\ \\ \\ \\ \\ \\}
&\begin{minipage}[t]{0.2\linewidth}
\includegraphics[scale=0.22]{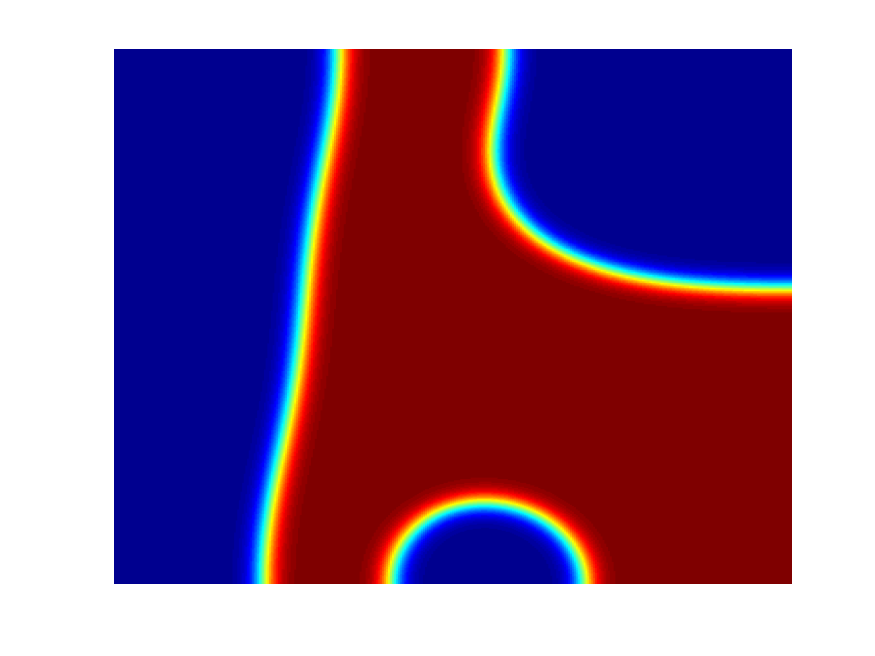}
\end{minipage}&
\begin{minipage}[t]{0.2\linewidth}
\includegraphics[scale=0.22]{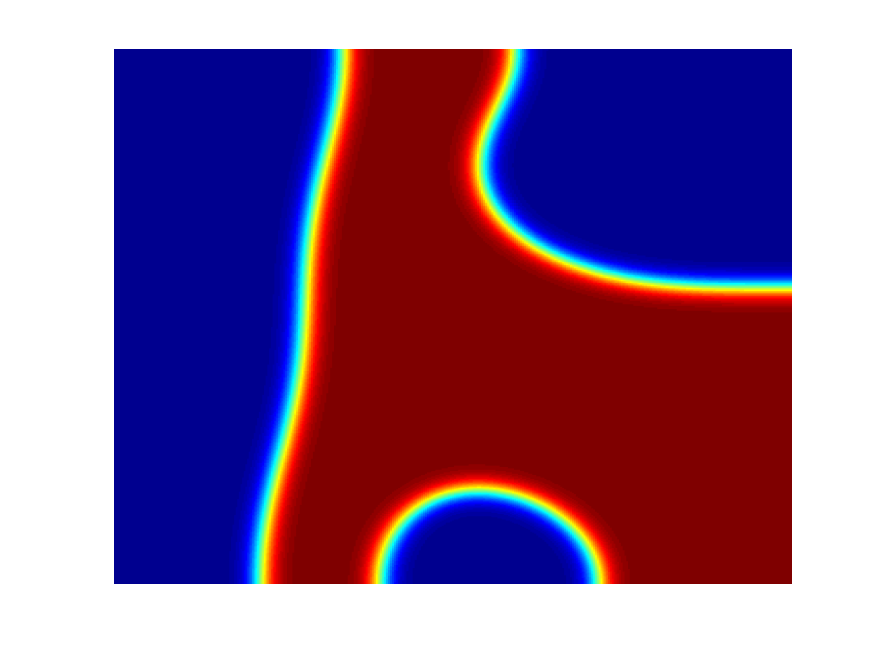}
\end{minipage}&
\begin{minipage}[t]{0.2\linewidth}
\includegraphics[scale=0.22]{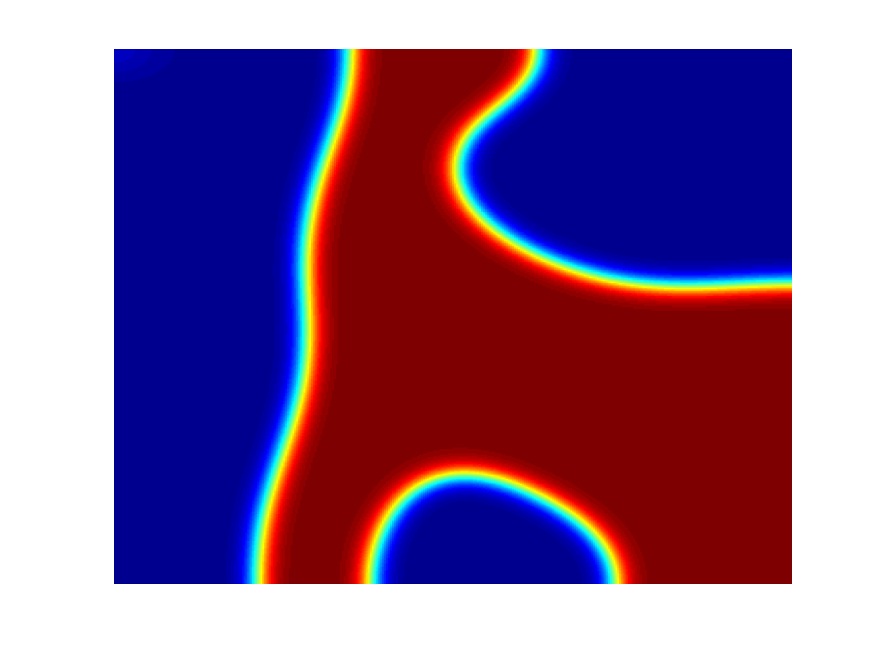}
\end{minipage}
&\begin{minipage}[t]{0.2\linewidth}
\includegraphics[scale=0.22]{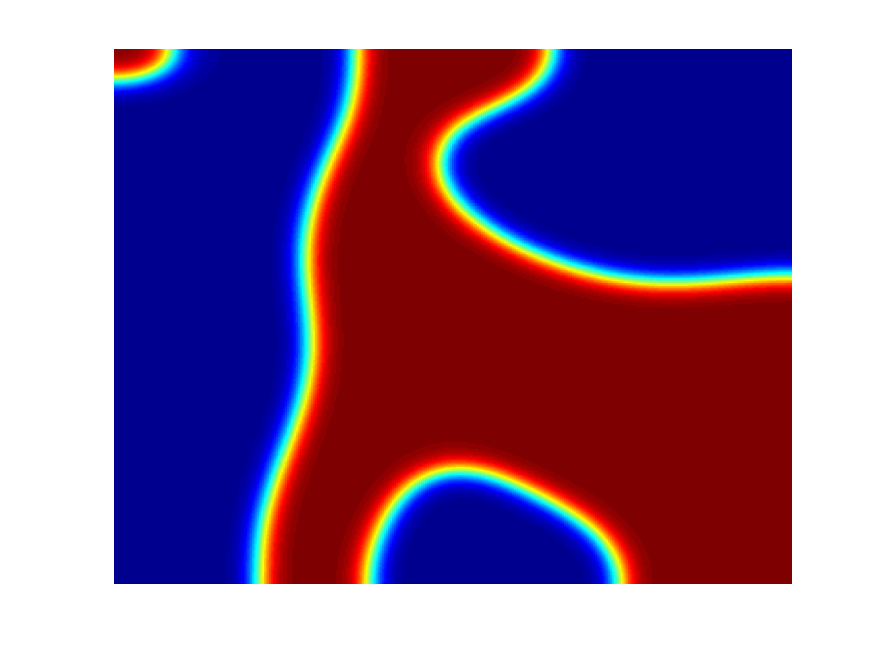}
\end{minipage}\\
\specialrule{0em}{-15mm}{.001pt}
\tabincell{c}{$t=100$\\ \\ \\ \\ \\ \\ \\}
&\begin{minipage}[t]{0.2\linewidth}
\includegraphics[scale=0.22]{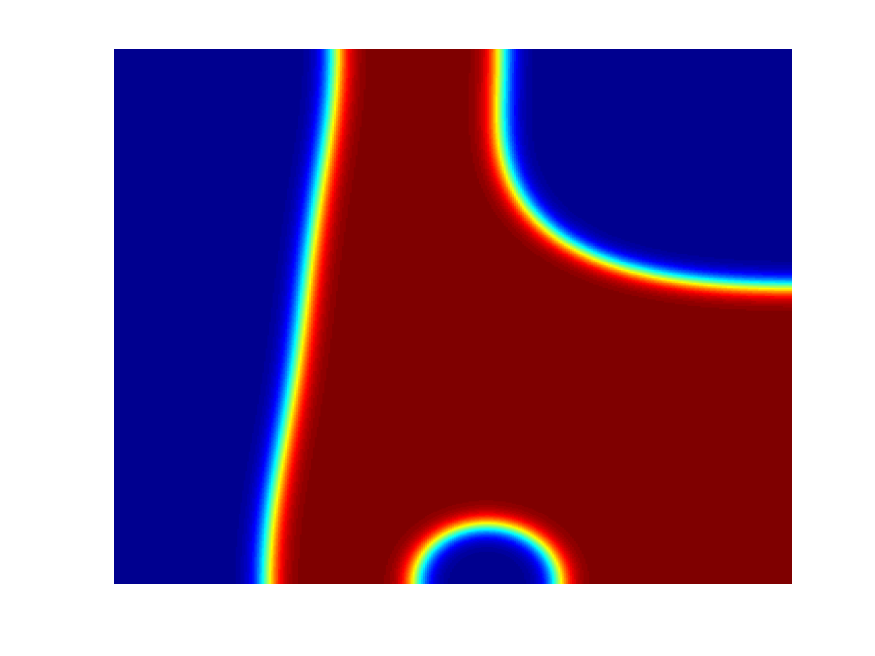}
\end{minipage}&
\begin{minipage}[t]{0.2\linewidth}
\includegraphics[scale=0.22]{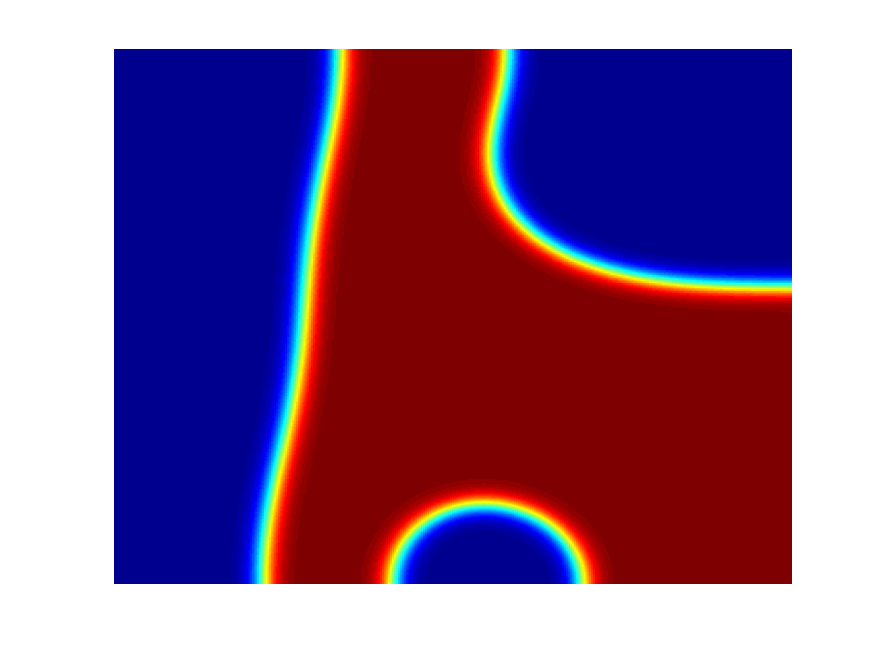}
\end{minipage}&
\begin{minipage}[t]{0.2\linewidth}
\includegraphics[scale=0.22]{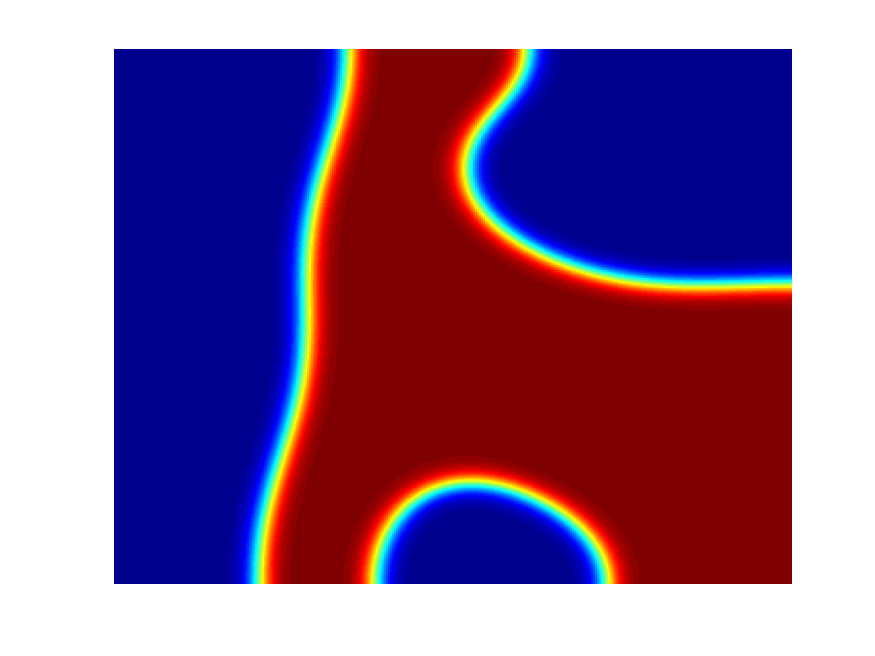}
\end{minipage}
&\begin{minipage}[t]{0.2\linewidth}
\includegraphics[scale=0.22]{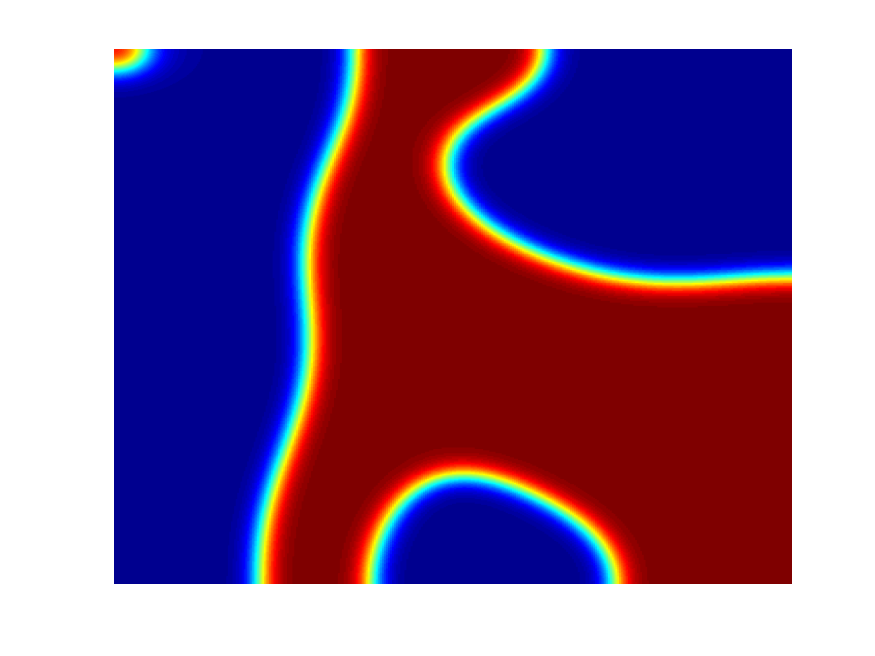}
\end{minipage}\\
\specialrule{0em}{-15mm}{.001pt}
\end{tabular}
  \end{center}
\caption{Snapshots of the simulated phase field evolution by the fractional Allen-Cahn equation with a random initial data
for $\alpha=1, 0.9, 0.7, 0.5$ with $M=100, \Delta t=0.01$.
}\label{fig7}
\end{figure*}

\begin{figure*}[htbp]
\begin{minipage}[t]{0.99\linewidth}
\centerline{\includegraphics[scale=0.8]{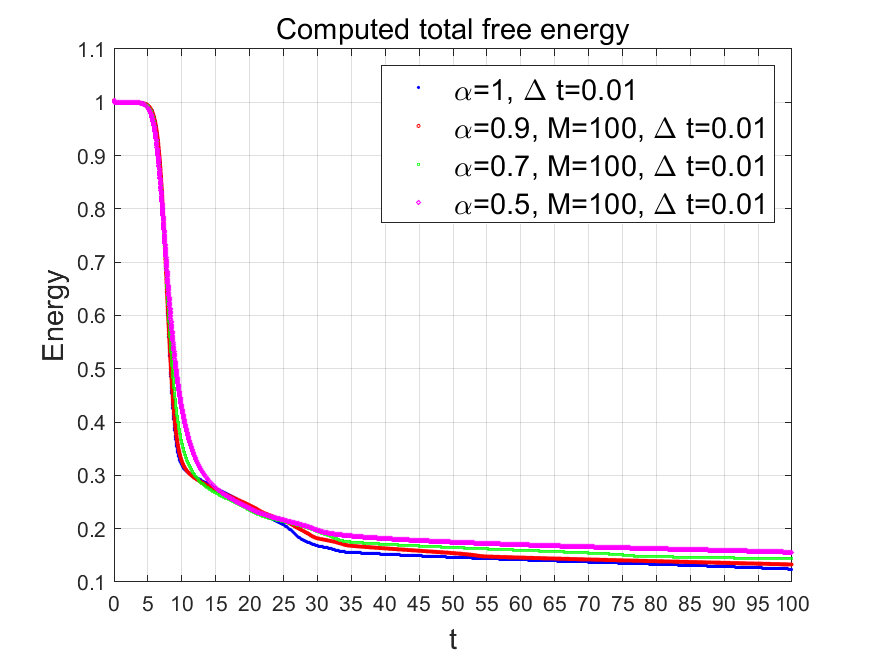}}
\centerline{}
\end{minipage}

\caption{Decay history of the computed free energy by the fractional Allen-Cahn equation for $\alpha=1, 0.9, 0.7, 0.5$.
}\label{fig8}
\end{figure*}

\section {Concluding remarks}
We have proposed two schemes for the $\alpha$ order time fractional Allen-Cahn equation:
a first order scheme and a $2-\alpha$ order scheme, both constructed on the uniform mesh and graded mesh.
Mathematically, such fractional gradient flows can be regarded as a variation of the phase field models,
used to describe the memory effect of some materials, which have attracted attentions in recent few years.
Our proposed schemes are based on the L1 discretization for the time fractional derivative and
extended SAV approach to deal with the nonlinear energy potential.
The stability property of the schemes was rigorously established, while the accuracy was carefully examined through
a series of numerical tests.
Precisely, our stability analysis showed that the schemes constructed on the uniform mesh are
unconditionally stable in the sense that the associated discrete energy is always decreasing.
As far as we know this is the first proof for the unconditional stability of a $2-\alpha$ order scheme.
The numerical experiments carried out in the paper have demonstrated
that the proposed schemes have desirable accuracy. In particular,
the initial singularity of the solution was effectively resolved by using the graded mesh.
From the coarsening simulation, we can conclude that the fractional Allen-Cahn equation
have similar dynamics for the phase separation but the coarsening dynamics slows down with decreasing fractional
order.  Numerically, it was observed that the time to the steady state becomes longer for smaller
fractional order.
{\color{black} Notice that this phenomena has already been observed in some other existing works,
but theoretical justification remains open.
It is also worth to mention that the proposed schemes can be directly applied to some other gradient flows,
such as the Cahn-Hilliard equation and molecular beam epitaxial growth models,
and the stability results can be derived similarly.}

{\color{black} To summarize, the proposed SAV-based schemes enjoy
a few good properties: unconditional stability;
only require solving linear decoupled systems with constant coefficients at each time step; only require that the free energy is bounded from below.
However there exist several theoretical issues such as
rigorous proof of the maximum principle of the numerical solution, error estimation, etc.
The difficulty is not only due to the presence of the fractional derivative,
but also comes from the SAV approach. In SAV-type schemes, the nonlinear term is treated explicitly, and an
auxiliary variable is introduced in front of the nonlinear term. This makes the numerical analysis harder than
for other approaches.
Possible answer to these issues certainly needs more effort.}

\bibliographystyle{plain}
\bibliography{ref}

\begin{thebibliography}{10}

\bibitem{AM17}
M.~Ainsworth and Z.~Mao.
\newblock {Analysis and approximation of a fractional Cahn-Hilliard equation}.
\newblock {\em SIAM J. Numer. Anal.}, 55(4):1689--1718, 2017.

\bibitem{AIM17}
M.~Ainsworth and Z.~Mao.
\newblock {Well-posedness of the Cahn-Hilliard equation with fractional free
  energy and its Fourier Galerkin approximation}.
\newblock {\em Chaos Soliton. Fract.}, 102:264--273, 2017.

\bibitem{ASS16}
G.~Akagi, G.~Schimperna, and A.~Segatti.
\newblock {Fractional Cahn-Hilliard, Allen-Cahn and porous medium equations}.
\newblock {\em J. Differ. Equations}, 261(6):2935--2985, 2016.

\bibitem{Ali11}
A.~A. Alikhanov.
\newblock A priori estimates for solutions of boundary value problems for
  fractional-order equations.
\newblock {\em Differ. Equ.}, 46(5):660--666, 2010.

\bibitem{Ali15}
A.A. Alikhanov.
\newblock A new difference scheme for the time fractional diffusion equation.
\newblock {\em J. Comput. Phys.}, 280(C):424--438, 2015.

\bibitem{All79}
S.~M. Allen and J.~W. Cahn.
\newblock {A microscopic theory for antiphase boundary motion and its
  application to antiphase domain coarsening}.
\newblock {\em Acta Metall. Mater.}, 27(6):1085--1095, 1979.

\bibitem{And97}
D.~M. Anderson, G.~B. Mcfadden, and A.~A. Wheeler.
\newblock Diffuse-interface methods in fluid mechanics.
\newblock {\em Annu. Rev. Fluid Mech.}, 30:139--165, 1997.

\bibitem{BH17}
D.~Baffet and J.S. Hesthaven.
\newblock High-order accurate adaptive kernel compression time-stepping schemes
  for fractional differential equations.
\newblock {\em J. Sci. Comput.}, 72(3):1169--1195, 2017.

\bibitem{BH17SIAM}
D.~Baffet and J.S. Hesthaven.
\newblock A kernel compression scheme for fractional differential equations.
\newblock {\em SIAM J. Numer. Anal.}, 55(2):496--520, 2017.

\bibitem{Cahn59}
J.~W. Cahn and J.~E. Hilliard.
\newblock {Free Energy of a Nonuniform System. I. Interfacial Free Energy}.
\newblock {\em J. Chem. Phys.}, 28:258--267, 1958.

\bibitem{Chen98}
L.~Chen and J.~Shen.
\newblock {Applications of semi-implicit Fourier-spectral method to phase field
  equations}.
\newblock {\em Comput. Phys. Commun.}, 108(2-3):147--158, 1998.

\bibitem{CZCWZ18}
L.~Chen, J.~Zhao, W.~Cao, H.~Wang, and J.~Zhang.
\newblock An accurate and efficient algorithm for the time-fractional molecular
  beam epitaxy model with slope selection.
\newblock {\em Comput. Phys. Commun.}, 2018.

\bibitem{Doi88}
M.~Doi and S.~F. Edwards.
\newblock {\em The theory of polymer dynamics}, volume~73.
\newblock Oxford University Press, 1988.

\bibitem{DJLQ18}
Q.~Du, L.~Ju, X.~Li, and Z.~Qiao.
\newblock {Stabilized linear semi-implicit schemes for the nonlocal
  Cahn-Hilliard equation}.
\newblock {\em J. Comput. Phys.}, 363:39--54, 2018.

\bibitem{DYZ19}
Q.~Du, J.~Yang, and Z.~Zhou.
\newblock {Time-fractional Allen-Cahn equations: analysis and numerical
  methods}.
\newblock {\em arXiv:1906.06584v1}, pages 1--24, 2019.

\bibitem{Eld02}
K.~R. Elder, M.~Katakowski, M.~Haataja, and M.~Grant.
\newblock Modeling elasticity in crystal growth.
\newblock {\em Phys. Rev. Lett.}, 88(24):245701, 2002.

\bibitem{GSZ14}
G.~Gao, Z.~Sun, and H.~Zhang.
\newblock A new fractional numerical differentiation formula to approximate the
  caputo fractional derivative and its applications.
\newblock {\em J. Comput. Phys.}, 259(2):33--50, 2014.

\bibitem{GSZ12}
G.~Gao, Z.~Sun, and Y.~Zhang.
\newblock A finite difference scheme for fractional sub-diffusion equations on
  an unbounded domain using artificial boundary conditions.
\newblock {\em J. Comput. Phys.}, 231(7):2865--2879, 2012.

\bibitem{Gur96}
M.~E. Gurtin and D.~Polignone.
\newblock Two-phase binary fluids and immiscible fluids described by an order
  parameter.
\newblock {\em Math. Models Methods Appl. Sci.}, 6:815--831, 1996.

\bibitem{HAX19}
D.~Hou, M.~Azaiez, and C.~Xu.
\newblock A variant of scalar auxiliary variable approaches for gradient flows.
\newblock {\em J. Comput. Phys.}, 395:307--332, 2019.
  https://doi.org/10.1016/j.jcp.2019.05.037.

\bibitem{JLGZ19}
B.~Ji, H.~Liao, Y.~Gong, and L.~Zhang.
\newblock {Adaptive linear second-order energy stable schemes for
  time-fractional Allen-Cahn equation with volume constraint}.
\newblock {\em arXiv:1909.10936}, pages 1--21, 2019.

\bibitem{JZ17}
S.~Jiang, J.~Zhang, Q.~Zhang, and Z.~Zhang.
\newblock Fast evaluation of the caputo fractional derivative and its
  applications to fractional diffusion equations.
\newblock {\em Commun. Comput. Phys.}, 21(3):650--678, 2017.

\bibitem{JZZ17}
S.~Jiang, J.~Zhang, Q.~Zhang, and Z.~Zhang.
\newblock {Fast evaluation of the Caputo fractional derivative and its
  applications to fractional diffusion equations}.
\newblock {\em Commun. Comput. Phys.}, 21(3):650--678, 2017.

\bibitem{Les79}
F.~M. Leslie.
\newblock Theory of flow phenomena in liquid crystals.
\newblock {\em Adv. Liq. Cryst.}, 4:1--81, 1979.

\bibitem{LWY17}
Z.~Li, H.~Wang, and D.~Yang.
\newblock A space-time fractional phase-field model with tunable sharpness and
  decay behavior and its efficient numerical simulation.
\newblock {\em J. Comput. Phys.}, 347:20--38, 2017.

\bibitem{LTZ19}
H.~Liao, T.~Tang, and T.~Zhou.
\newblock {A second-order and nonuniform time-stepping maximum-principle
  preserving scheme for time-fractional Allen-Cahn equations}.
\newblock {\em arXiv:1909.10216}, pages 1--22, 2019.

\bibitem{LX07}
Y.~Lin and C.~Xu.
\newblock Finite difference/spectral approximations for the time-fractional
  diffusion equation.
\newblock {\em J. Comput. Phys.}, 225(2):1533--1552, 2007.

\bibitem{LCWZ18}
H.~Liu, A.~Cheng, H.~Wang, and J.~Zhao.
\newblock {Time-fractional Allen-Cahn and Cahn-Hilliard phase-field models and
  their numerical investigation}.
\newblock {\em Comput. Math. Appl.}, 76:1876--1896, 2018.

\bibitem{LX15}
C.~Lv and C.~Xu.
\newblock Improved error estimates of a finite difference/spectral method for
  time-fractional diffusion equations.
\newblock {\em Int. J. Numer. Anal. Mod.}, 12(2):384--400, 2015.

\bibitem{LX16}
C.~Lv and C.~Xu.
\newblock Error analysis of a high order method for time-fractional diffusion
  equation.
\newblock {\em SIAM J. Sci. Comput.}, 38(5):A2699--A2724, 2016.

\bibitem{SX18}
J.~Shen and J.~Xu.
\newblock {Convergence and error analysis for the scalar auxiliary
  variable(SAV) schemes to gradient flows}.
\newblock {\em SIAM J. Numer. Anal.}, 56(5):2895--2912, 2018.

\bibitem{Shen17_1}
J.~Shen, J.~Xu, and J.~Yang.
\newblock {The scalar auxiliary variable (SAV) approach for gradient flows}.
\newblock {\em J. Comput. Phys.}, 353:407--416, 2018.

\bibitem{Shen17_2}
J.~Shen, J.~Xu, and J.~Yang.
\newblock A new class of efficient and robust energy stable schemes for
  gradient flows.
\newblock {\em SIAM Rev.}, 61(3):474--506, 2019.

\bibitem{SXK16}
F.~Song, C.~Xu, and G.~Em Karniadakis.
\newblock A fractional phase-field model for two-phase flows with tunable
  sharpness: Algorithms and simulations.
\newblock {\em Comput. Methods Appl. Mech. Engrg.}, 305:376--404, 2016.

\bibitem{SW06}
Z.~Sun and X.~Wu.
\newblock A fully discrete difference scheme for a diffusion-wave system.
\newblock {\em Appl. Numer. Math.}, 56(2):193--209, 2006.

\bibitem{TYZ18}
T.~Tang, H.~Yu, and T.~Zhou.
\newblock On energy dissipation theory and numerical stability for
  time-fractional phase field equations.
\newblock {\em SIAM J. Sci. Comput.}, 41(6):A3757--A3778, 2019.

\bibitem{YSZ17}
Y.~Yan, Z.~Sun, and J.~Zhang.
\newblock Fast evaluation of the caputo fractional derivative and its
  applications to fractional diffusion equations: A second-order scheme.
\newblock {\em Commun. Comput. Phys.}, 22(4):1028--1048, 2017.

\bibitem{Yan16}
X.~Yang.
\newblock Linear, first and second-order, unconditionally energy stable
  numerical schemes for the phase field model of homopolymer blends.
\newblock {\em J. Comput. Phys.}, 327:294--316, 2016.

\bibitem{Yue04}
P.~Yue, J.~Feng, C.~Liu, and J.~Shen.
\newblock A diffuse-interface method for simulating two-phase flows of complex
  fluids.
\newblock {\em J. Fluid Mech.}, 515:293--317, 2004.

\bibitem{ZTB17}
F.~Zeng, I.~Turner, and K.~Burrage.
\newblock A stable fast time-stepping method for fractional integral and
  derivative operators.
\newblock {\em J. Sci. Comput.}, 77(1):283--307, 2018.

\bibitem{ZZJZ17}
Q.~Zhang, J.~Zhang, S.~Jiang, and Z.~Zhang.
\newblock {Numerical solution to a linearized time fractional KdV equation on
  unbounded domains}.
\newblock {\em Math. Comp.}, 87(310), 2018.

\bibitem{CZW18}
J.~Zhao, L.~Chen, and H.~Wang.
\newblock On power law scaling dynamics for time-fractional phase field models
  during coarsening.
\newblock {\em Commun. Nonlinear Sci. Numer. Simulat.}, 70:257--270, 2019.

\bibitem{Zha17}
J.~Zhao, Q.~Wang, and X.~Yang.
\newblock Numerical approximations for a phase field dendritic crystal growth
  model based on the invariant energy quadratization approach.
\newblock {\em Int. J. Numer. Meth. Eng.}, 110(3):279--300, 2017.

\bibitem{ZLWW14}
G.~Zhen, J.~Lowengrub, C.~Wang, and S.~Wise.
\newblock {Second order convex splitting schemes for periodic nonlocal
  Cahn-Hilliard and Allen-Cahn equations}.
\newblock {\em J. Comput. Phys.}, 277:48--71, 2014.

\end{thebibliography}
\end{document}